\newlength{\dhatheight}
\newcommand{\doublehat}[1]{%
    \settoheight{\dhatheight}{\ensuremath{\hat{#1}}}%
    \addtolength{\dhatheight}{-0.35ex}%
    \hat{\vphantom{\rule{1pt}{\dhatheight}}%
    \smash{\hat{#1}}}}
\numberwithin{equation}{section}
\newtheorem{theorem}{Theorem}[section]
\newtheorem{lemma}[theorem]{Lemma}
\newtheorem{proposition}[theorem]{Proposition}
\newtheorem{corollary}[theorem]{Corollary}
\newtheorem{hypothesis}{Hypothesis}
\newtheorem{example}[theorem]{Example}
\theoremstyle{remark}
\newtheorem{remark}[theorem]{Remark}
\DeclareMathOperator{\Coh}{Coh}
\DeclareMathOperator{\QCoh}{QCoh}
\DeclareMathOperator{\Blow}{Blow}
\DeclareMathOperator{\K3}{K3}
\DeclareMathOperator{\Hilb}{Hilb}
\DeclareMathOperator{\Sym}{Sym}
\DeclareMathOperator{\sym}{\mathfrak{S}}
\DeclareMathOperator{\Pic}{Pic}
\DeclareMathOperator{\rk}{rk}
\DeclareMathOperator{\Ext}{Ext}
\DeclareMathOperator{\discr}{discr}
\DeclareMathOperator{\Vect}{Vect}
\DeclareMathOperator{\id}{id}
\DeclareMathOperator{\Ker}{Ker}
\DeclareMathOperator{\sign}{sign}
\DeclareMathOperator{\dif}{d}
\DeclareMathOperator{\tr}{tr}
\DeclareMathOperator{\et}{and}
\DeclareMathOperator{\ch}{ch}
\DeclareMathOperator{\GL}{GL}
\DeclareMathOperator{\Ree}{Re}
\DeclareMathOperator{\Ima}{Im}
\DeclareMathOperator{\pr}{pr}
\DeclareMathOperator{\Mon}{Mon}
\DeclareMathOperator{\divi}{div}
\DeclareMathOperator{\divides}{divides}
\DeclareMathOperator{\ou}{or}
\DeclareMathOperator{\Aut}{Aut}
\DeclareMathOperator{\Kum}{Kum}
\DeclareMathOperator{\Ort}{O}
\DeclareMathOperator{\Z}{\mathbb{Z}}
\DeclareMathOperator{\C}{\mathbb{C}}
\DeclareMathOperator{\Q}{\mathbb{Q}}
\newcommand{\Dd}{\mathcal{D}}
\newcommand{\Ii}{\mathcal{I}}
\newcommand{\Hh}{\mathcal{H}}
\newcommand{\Mm}{\mathcal{M}}
\newcommand{\Oo}{\mathcal{O}}
\newcommand{\Pp}{\mathcal{P}}
\newcommand{\Uu}{\mathcal{U}}
\newcommand{\CC}{\mathbb{C}}
\newcommand{\PP}{\mathbb{P}}
\newcommand{\RR}{\mathbb{R}}
\newcommand{\ZZ}{\mathbb{Z}}
\DeclareMathOperator{\M}{M}
\renewcommand{\L}{\mathbf{L}}
\newcommand{\R}{\mathbf{R}}
\newcommand{\Fff}{\mathscr{F}}
\newcommand{\Ggg}{\mathscr{G}}
\newcommand{\Ppp}{\mathscr{P}}
\newcommand{\BBB}{\mathrm{(BBB)}}
\newcommand{\BAA}{\mathrm{(BAA)}}
\newcommand{\ABA}{\mathrm{(ABA)}}
\newcommand{\AAB}{\mathrm{(AAB)}}
\newcommand{\quotient}[2]{{\raisebox{.2em}{\thinspace $#1$}\left / \raisebox{-.15em}{ $#2$}\right.}}
\newcommand\Quotient[2]{
        \mathchoice
            {
                \text{\raise1ex\hbox{\thinspace $#1$}\Big{/} \lower1ex\hbox{$#2$} \thinspace}%
            }
            {
                #1\,/\,#2
            }
            {
                #1\,/\,#2
            }
            {
                #1\,/\,#2
            }
    }
\newcommand\GIT[2]{
        \mathchoice
            {
                \text{\raise1ex\hbox{\thinspace $#1$}\Big{/}\!\!\!\!\Big{/} \lower1ex\hbox{$#2$} \thinspace}%
            }
            {
                #1\,/\,#2
            }
            {
                #1\,/\,#2
            }
            {
                #1\,/\,#2
            }
    }
\newcommand{\morph}[6]{\begin{array}{cccc} #6: & #1  & \stackrel{#5}{\longrightarrow} &  #2  \\ & #3 & \longmapsto & #4  \end{array}}
\newcommand{\ol}{\overline}
\date{\today}
\begin{document}

\title{Brane involutions on irreducible holomorphic symplectic manifolds}

\author{Emilio \textsc{Franco}\footnote{EF was supported by the FAPESP grants 2012/16356-6 and 2015/06696-2 (BEPE).}, Marcos \textsc{Jardim}\footnote{MJ was supported by FAPESP grant number 2014/05733-9 and 2016/03759-6, and also acknowledges the CNPQ grant number 303332/2014-0.}, Gr\'egoire \textsc{Menet}\footnote{GM is supported by the FAPESP grant number 2014/05733-9.}}

\maketitle



\begin{abstract}
In the context of irreducible holomorphic symplectic manifolds, we say that (anti)ho\-lo\-mor\-phic (anti)sym\-ple\-ctic involutions are brane involutions since their fixed point locus is a brane in the physicists' language, i.e. a submanifold which is either complex or lagrangian submanifold with respect to each of the three K\"ahler structures of the associated hyperk\"ahler structure. 

Starting from a brane involution on a $\K3$ or abelian surface, one can construct a natural brane involution on its moduli space of sheaves. We study these natural involutions and their relation with the Fourier--Mukai transform. Later, we recall the lattice-theoretical approach to Mirror Symmetry. We provide two ways of obtaining a brane involution on the mirror and we study the behaviour of the brane involutions under both mirror transformations, giving examples in the case of a $\K3$ surface and $\K3^{[2]}$-type manifolds. 
\end{abstract}

\maketitle

\centerline{\small{AMS Classification 2010: 14J28, 14J33, 14J50}}

\tableofcontents

\section{Introduction}

Branes play a central role in modern mathematical physics, specially in connection with the striking predictions of the mirror symmetry conjecture.

Following Kapustin and Witten \cite{kapustin&witten}, a {\it brane} in a hyperk\"ahler manifold is a submanifold which is either complex (B-brane) or lagrangian (A-brane) for each of the K\"ahler structures of our hyperk\"ahler structure. One then specifies the type of the brane, saying that it is either a $\BBB$, $\BAA$, $\ABA$ or an $\AAB$-brane. A hyperk\"ahler manifold is naturally a K\"ahler holomorphic symplectic manifold. In the compact case, by Yau's theorem, the converse is also true and both geometric structures are equivalent. In this context, a $\BBB$-brane is a holomorphic symplectic submanifold, and $\BAA$, $\ABA$ or $\AAB$-branes are complex lagrangian subvarieties for one of the three K\"ahler holomorphic symplectic structures. When non-empty, the fixed point locus of a holomorphic symplectic, a holomorphic antisymplectic, an antiholomorphic symplectic or an antiholomorphic antisymplectic involution is, respectively, a $\BBB$, $\BAA$, $\ABA$ or an $\AAB$-brane. Therefore, we refer to these involutions as {\it brane involutions}.

The study of holomorphic symplectic and antisymplectic involutions on $\K3$ surfaces was started by Nikulin \cite{Nikulin0, Lattice, Nikulin, NikulinIMU}. Since irreducible holomorphic symplectic (IHS) manifolds are the natural generalization of $\K3$ surfaces in higher dimension, many authors have extended this study to such manifolds \cite{Beauville, Sarti, Boissiere, BCS}. Aspinwall and Morrison \cite{Aspinwall} studied mirror symmetry for $\K3$ surfaces in terms of the cohomology lattice $H^2(X,\ZZ)$. Dolgachev in \cite{Dolgachev} constructed the moduli spaces of $M$-polarized K3 surfaces where $M$ is a sub-lattice of the Picard lattice; as a consequence, he provided a notion of mirror symmetry for the moduli spaces of K3 surfaces endowed with a holomorphic antisymplectic involution. This lattice-theoretical approach to mirror symmetry can be generalized to irreducible holomorphic symplectic manifolds. Huybrechts in \cite{Huybrechts} proposed a first definition via an action on the period domain and, Camere in \cite{Camere} extended Dolgachev construction to higher dimensions.

\

In this article we study brane involutions on irreducible holomorphic symplectic manifolds. From a brane involution $i$ on a symplectic surface $X$, it is possible to give a natural construction of a brane involution $\hat{\imath}$ on a moduli space $\M_X(H,v)$ of sheaves over $X$.  In Section \ref{sc moduli space} we prove that $\hat{\imath}$ has the same type as $i$ [Theorem \ref{tm branes in M}], and that this construction commutes with Fourier--Mukai transform [Proposition \ref{pr FM commutes with brane involutions}]. 

Under some conditions, we prove in Section \ref{discussion} the existence of mirrors, as defined by Huybrechts via an action on the period domain \cite{Huybrechts}. To be more precise, for a marked irreducible holomorphic symplectic manifold $(X,\varphi)$ endowed with a holomorphic 2-form $\sigma_X$ and a K\"ahler class $\omega_X$, we are able to associate a unique mirror set of data $(\check{X},\check{\varphi},\check{\sigma_X},\check{\omega_X})$. The proof is mainly a generalization of the transitivity of the Weyl group on the set of chambers of a K3 surface using the new developments of Mongardi \cite{NoteKalher} and Bayer--Hassett--Tschinkel \cite{Hassett} on the K\"ahler cone of an irreducible holomorphic symplectic manifold. Then it becomes possible to study the behaviour of brane involutions under mirror symmetry transformation  in Section \ref{sc branes and mirror symmetry}. From a brane involution $i$ on $X$ and the induced involution $i^*$ in cohomology, we obtain the {\it direct} mirror involution in cohomology $\check{\imath}_{dir}^{\, *}$ and the {\it indirect} mirror involution in cohomology $\check{\imath}_{ind}^{\, *}$ and we study under which conditions they promote to an involution $\check{\imath}$ in the mirror and how is the type transformed [Theorem \ref{tm mirror transform of branes}]. 
\[
\begin{tabular}{|c|c|c|}
\hline
Type of $i$ & Type of $\check{\imath}$ & Construction \\
\hline
$\BBB$ & $\BBB$ & Direct \\
\hline
\multirow{2}{1.2cm}{$\BAA$} & $\AAB$& Direct\\
\cline{2-3}
& $\BAA$ & Indirect\\
\hline
$\ABA$ & $\ABA$ & Direct\\
\hline
\multirow{2}{1.2cm}{$\AAB$} & $\BAA$ & Direct\\
\cline{2-3}
& $\AAB$ & Indirect\\
\hline
\end{tabular}
\]
In the case of holomorphic antisymplectic involutions, the indirect mirror transform corresponds to the one studied by Dolgachev \cite{Dolgachev} and Camere \cite{Camere}.

In Section \ref{sc examples} we provide concrete examples of such mirror transforms in the cases of $\K3$ surfaces and $\K3^{[2]}$-type manifolds endowed with a brane involution, studying the discrete lattice invariants that allow for such transformations. Finally, we give a description of $\ABA$ and $\AAB$-branes inside the Hilbert scheme of $2$ points of a $\K3$ surface.



\

\noindent {\it Acknowledgements.}

We want to thank Giovanni Mongardi and Richard Thomas for useful discussions. We also thank the Mathematical Institute of the University of Bonn, the Imperial College London, and the University of Edinburgh for their hospitality during the time this article was prepared.


\section{Preliminaries}

\subsection{Irreducible holomorphic symplectic manifolds}\label{sc IHSM}

An {\it irreducible holomorphic symplectic (IHS) manifold} $(X, \omega, \sigma)$ is a simply connected compact K\"ahler manifold $(X,\omega)$ such that $H^0(X, \Omega_X^2)$ is generated by an everywhere non-degenerate holomorphic 2-form $\sigma$. 

The current list of compact IHS manifolds, up to deformation equivalence, is:
\begin{itemize}
\item $\K3$ surfaces, the only examples in dimension 2;
\item Hilbert schemes of points of a $\K3$ surface;
\item generalized Kummer varieties; and
\item O'Grady's $6$- and $10$-dimensional examples.
\end{itemize}

Recall that a {\it hyperk\"ahler manifold} $(X,g, J_1, J_2, J_3)$ is a Riemannian manifold $(X,g)$ together with three complex structures $J_1$, $J_2$ and $J_3$ that satisfy the quaternion relations
$$ J_1^2 = J_2^2 = J_3^2 = J_1J_2J_3 = -1 $$ 
and such that the 2-forms $\omega_\ell(\cdot, \cdot) = g(\cdot, J_\ell(\cdot))$ are K\"ahler forms. Given a hyperk\"ahler manifold $(X,g,J_{1},J_{2},J_{3})$ one can always define a K\"ahler manifold equipped with a holomorphic symplectic structure $\sigma_1 = \omega_2 + i \omega_3$. For compact manifolds, by Yau's Theorem, the converse is also true, and giving a compact hyperk\"ahler manifold is equivalent to giving a compact holomorphic symplectic manifold.

For an IHS manifold $X$, one can endow $H^2(X, \ZZ)$ with the structure of a lattice $\Gamma$, where we denote by $\cdot$ the cup product in the case of $\K3$ surfaces, or the Beauville--Bogomolov form in higher dimensions. One has that $\Gamma = U^3 \oplus E_8(-1)^2$ in the case of a $\K3$ surface and $\Gamma = U^3 \oplus E_8(-1)^2 \oplus (2-2n)$ in the case of its Hilbert scheme of $n$ points.

Let $\Gamma$ be a lattice of signature $(3,r)$, with $r\in\mathbb{N}$, and let $X$ be an IHS manifold such that $H^{2}(X,\Z)\simeq \Gamma$. A \emph{mark} on $X$ is the choice of an isometry $\varphi: H^{2}(X,\Z)\rightarrow \Gamma$; the pair $(X,\varphi)$ is called a \emph{marked irreducible holomorphic symplectic manifold}. The quadruple $(X,\sigma_X,\omega_X,\varphi)$ consisting of an IHS manifold $X$, a 2-form $\sigma_X\in H^0(X,\Omega_X^2)$, a K\"ahler class $\omega_X$, and $\varphi$ is a mark on $X$ will be called a \emph{marked irreducible holomorphic symplectic manifold endowed with a hyperk\"ahler structure}. 

Let $\mathcal{M}_{\Gamma}$ denote the moduli space of marked IHS manifolds $(X,\varphi)$, and define the \emph{period domain} as follows:
$$\Omega:=\mathbb{P}\left\{ x\in \Gamma\otimes\C ~|~ x^{2}=0\ \et\ x\cdot \overline{x}>0\right\}.$$
The relation between $\mathcal{M}_{\Gamma}$ and $\Omega$ are strongly related is decribed in the following result due to Huybrechts.


\begin{theorem}[\cite{Huybrechts2}, Theorem 8.1] \label{period}
Let $\mathcal{M}_{\Gamma}^o$ be a non-empty connected component of $\mathcal{M}_{\Gamma}$.
The map so called \emph{period map}
\[
\morph{\Mm^o_\Gamma}{\Omega}{(X,\varphi)}{\varphi\left(H^{0}(\Omega_{X}^{2})\right),}{}{\Ppp}
\]
is surjective.
\end{theorem}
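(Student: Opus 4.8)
The plan is to prove surjectivity by showing that the image $U:=\Ppp(\Mm^o_\Gamma)$ is a non-empty, open and closed subset of the connected period domain $\Omega$, so that necessarily $U=\Omega$. First I would check that $\Ppp$ is well defined, i.e.\ that it lands in $\Omega$: for an IHS manifold the holomorphic $2$-form $\sigma_X$ satisfies $q(\sigma_X)=0$ and $q(\sigma_X,\overline{\sigma_X})>0$ for the Beauville--Bogomolov form $q$, so $\varphi(H^0(\Omega_X^2))$ is indeed a point of $\Omega$. That $\Omega$ is a connected manifold follows by writing $[x]=[u+iv]$ and identifying $\Omega$ with the Grassmannian of oriented positive-definite $2$-planes $\langle u,v\rangle\subset\Gamma\otimes\RR$, which is connected because the positive index of $\Gamma$ equals $3$.

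For openness I would invoke the local Torelli theorem. Deformations of an IHS manifold are unobstructed (Bogomolov--Tian--Todorov), so the Kuranishi space is smooth; contracting with $\sigma_X$ yields $T_X\cong\Omega_X^1$ and hence $h^1(X,T_X)=h^{1,1}(X)=b_2(X)-2=\dim_{\CC}\Omega$. The differential of $\Ppp$ is identified, via Kodaira--Spencer and Griffiths transversality, with the contraction map $H^1(X,T_X)\to \Hom(H^{2,0},H^{1,1})$, $\theta\mapsto \iota_\theta\sigma_X$, which is an isomorphism of spaces of equal dimension. Thus $\Ppp$ is a local biholomorphism and $U$ is open.

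The engine for closedness is the theory of twistor lines. Given $(X,\varphi)$ with a Kähler class $\alpha$, Yau's theorem furnishes a hyperkähler metric and hence a twistor family $\mathcal{X}\to\PP^1$, a smooth holomorphic family all of whose fibres are IHS; since $\PP^1$ is simply connected the local system $R^2\pi_*\ZZ$ is trivial and $\varphi$ propagates to a consistent mark, so the entire family lands in the same component $\Mm^o_\Gamma$. Its period sweeps out the \emph{twistor line} $F_P=\PP(P\otimes\CC)\cap\Omega$ attached to the positive-definite $3$-plane $P=\langle\alpha,\Ree\,\sigma_X,\Ima\,\sigma_X\rangle\subset\Gamma\otimes\RR$, a smooth conic through $\Ppp(X,\varphi)$. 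Consequently every twistor line through a point of $U$ is entirely contained in $U$.

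The main obstacle is closedness, namely showing that these conics fill up $\Omega$. Fixing $x\in\overline U$ described by its positive $2$-plane $\Pi_x\subset\Gamma\otimes\RR$, the twistor lines through a nearby point $x_0\in U$ correspond to the positive $3$-planes containing $\Pi_{x_0}$; as the Kähler class ranges over the open Kähler cone these $3$-planes tilt and the conics sweep out a full neighbourhood of $x_0$ inside $U$. The delicate step, and the heart of the theorem, is to upgrade this local covering into a statement reaching $x$: one must arrange that $\Pi_x$ and some $\Pi_{x_0}$ (with $x_0\in U$ arbitrarily close to $x$) lie in a common positive $3$-plane realized by an admissible Kähler class, so that the connecting conic lies in $U$ and forces $x\in U$. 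Controlling which $3$-planes are admissible—i.e.\ guaranteeing the existence of the required Kähler classes and the uniformity of the covering near $x$—is where the positivity of the Kähler cone and a compactness argument on the Grassmannian of positive $3$-planes are indispensable. Granting this, $U$ is closed; being open, closed and non-empty in the connected space $\Omega$, it equals $\Omega$, which proves surjectivity.
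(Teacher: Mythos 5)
The paper does not prove this statement: it is quoted from Huybrechts [Hu1, Theorem 8.1] and used as a black box, so there is no internal proof to compare against. On its own terms, your outline is the standard skeleton of Huybrechts' argument — well-definedness of $\Ppp$, connectedness of $\Omega$ via the Grassmannian of oriented positive $2$-planes, openness of the image $U$ by local Torelli together with Bogomolov--Tian--Todorov unobstructedness, and the fact that the twistor line of the positive $3$-plane $P=\langle\alpha,\Ree\sigma_X,\Ima\sigma_X\rangle$ attached to a K\"ahler class $\alpha$ lies entirely in $U$. All of that is correct.

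The last step, however, is a genuine gap, and you concede it yourself (``Granting this, $U$ is closed''). The missing idea is the restriction to \emph{generic} twistor lines, i.e.\ those attached to positive $3$-planes $P\subset\Gamma\otimes\RR$ with $P^{\perp}\cap\Gamma=0$. For a marked fibre whose period lies on such a line the Picard group is trivial, so the K\"ahler cone is the whole positive cone in $H^{1,1}_{\RR}$ and \emph{every} generic positive $3$-plane containing the period $2$-plane is realized by a K\"ahler class; this is exactly what resolves the admissibility problem you flag, and it lets membership in $U$ propagate along any chain of generic twistor lines. The argument is then completed by the purely lattice-theoretic fact that any two points of $\Omega$ are joined by such a chain; one never proves that $U$ is topologically closed, and trying to do so directly is problematic because $\mathcal{M}_{\Gamma}$ is non-Hausdorff and limits of marked manifolds need not exist. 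Note also that your ``tilting'' picture — varying $\alpha$ in the K\"ahler cone so the conics sweep out a neighbourhood of $x_0$ — only reproves openness: a single twistor line through $x_0$ passes through a prescribed nearby point $x$ only if the two period planes span a positive $3$-plane meeting the K\"ahler cone, which fails for generic pairs and cannot be forced when the Picard lattice at $x_0$ is large. Without the genericity device the compactness argument you gesture at does not close the proof.
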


Let $X_1$ and $X_2$ be IHS manifolds. We say that the isomorphism $f: H^*(X_1,\Z) \stackrel{\cong}{\rightarrow} H^*(X_2,\Z)$ is a \emph{parallel-transport operator} if there exist a smooth and proper family $\pi : \mathcal{X}\rightarrow B$ of IHS manifolds over an analytic base $B$, points $b_i\in B$, isomorphisms $\psi_i : X_i \rightarrow \mathcal{X}_{b_i}$ for $i=1,2$, and a continuous path $\gamma: \left[ 0, 1 \right] \rightarrow B$, satisfying $\gamma(0)=b_1$, $\gamma(1)=b_2$, such that the parallel transport in the local system $R\pi_* \Z$ along $\gamma$ induces the homomorphism $\psi_{2*}\circ f\circ \psi_1^{*}: H^*(\mathcal{X}_{b_1},\Z)\stackrel{\cong}{\rightarrow} H^*(\mathcal{X}_{b_2},\Z)$. An isomorphism $g: H^{k}(X_1,\Z)\stackrel{\cong}{\rightarrow} H^{k}(X_2,\Z)$ is said to be a \emph{parallel-transport operator} if it is the $k$-th graded summand of a parallel-transport operator $f$ as above. 

One has the folowing Hodge theoretic Torelli theorem.

\begin{theorem}[\cite{Markman}, Theorem 1.3] \label{Torelli}
Let $X$ and $Y$ be IHS manifolds, which are deformation equivalent.
\begin{enumerate}
\item $X$ and $Y$ are bimeromorphic if and only if there exists a parallel transport operator $f : H^2(X,\ZZ) \to H^2(Y,\ZZ)$, which is an isomorphism of integral Hodge structures.

\item \label{it torelli} Let $f:H^{2}(X,\Z)\rightarrow H^{2}(Y,\Z)$ be a parallel transport operator which is an isomorphism of integral Hodge structures. There exists an isomorphism $\widetilde{f}:X\rightarrow Y$, such that $f=\widetilde{f}_*$, if and only if $f$ maps some K\"ahler class on $X$ to some K\"ahler class on $Y$.
\end{enumerate}
\end{theorem}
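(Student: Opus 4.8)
The statement is Markman's Hodge-theoretic refinement of Verbitsky's Global Torelli theorem, so my plan is to reduce both parts to the behaviour of the period map on a connected component $\Mm^o_\Gamma$ of the marked moduli space, together with the chamber decomposition of the positive cone induced by the K\"ahler cone. Throughout I would fix a mark $\varphi_X \colon H^2(X,\Z) \to \Gamma$ and transport it to $Y$ by setting $\varphi_Y := \varphi_X \circ f^{-1}$, so that the two marked pairs $(X,\varphi_X)$ and $(Y,\varphi_Y)$ become the principal objects of study; since $f$ is a Hodge isometry, $\varphi_Y$ is again a genuine mark.

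For the forward implication of part (1), I would start from a bimeromorphic map $\phi \colon X \dashrightarrow Y$. Because IHS manifolds have trivial canonical bundle, any such $\phi$ is an isomorphism in codimension one, its indeterminacy locus having codimension at least two; hence the closure of its graph defines an analytic correspondence inducing an isometry $\phi_* \colon H^2(X,\Z) \to H^2(Y,\Z)$ of the Beauville--Bogomolov lattices which respects the Hodge decomposition, precisely because it is realized by an algebraic/analytic cycle. That $\phi_*$ is moreover a parallel-transport operator follows from the fact that bimeromorphic IHS manifolds are deformation equivalent, combined with a deformation of $\phi$ in families realizing $\phi_*$ as monodromy.

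For the reverse implication, which is the heart of the matter, I would invoke the Global Torelli theorem in Verbitsky's form. Since $f$ is a Hodge isometry, the pairs $(X,\varphi_X)$ and $(Y,\varphi_Y)$ share the same image under the period map $\Ppp$ of Theorem \ref{period}; since $f$ is a parallel-transport operator, they lie in the same connected component $\Mm^o_\Gamma$. Verbitsky's theorem asserts that $\Ppp$ is injective on the set of inseparable equivalence classes of $\Mm^o_\Gamma$, and Huybrechts' analysis of the fibres identifies points with a common period as bimeromorphic manifolds; hence $X$ and $Y$ are bimeromorphic, which proves part (1). The main obstacle of the whole program lives here: establishing this injectivity is exactly the content of Verbitsky's Global Torelli theorem, a deep result resting on Teichm\"uller-space and ergodicity techniques, which one must import wholesale rather than reprove.

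For part (2), the easy direction is that if $f = \widetilde{f}_*$ for an isomorphism $\widetilde{f}$, then $\widetilde{f}$ carries the K\"ahler cone of $X$ isomorphically onto that of $Y$, so $f$ sends K\"ahler classes to K\"ahler classes. Conversely, assuming $f$ maps some K\"ahler class $\kappa_X$ to a K\"ahler class, I would use part (1) to produce a bimeromorphic $\phi \colon X \dashrightarrow Y$ and then argue it is biregular. The key input is the chamber structure, following the work of Mongardi and of Bayer--Hassett--Tschinkel: the K\"ahler cone is one of the open chambers cut out inside the positive cone by the hyperplanes orthogonal to the wall classes, and a bimeromorphic map permutes these chambers. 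The hypothesis that $f(\kappa_X)$ is K\"ahler forces $f$ to send the K\"ahler chamber of $X$ onto the K\"ahler chamber of $Y$, rather than onto the chamber of some other birational model; after correcting $\phi$ by such a chamber-matching, one obtains a bimeromorphic map which is an isomorphism in codimension one and matches up the K\"ahler cones, hence contracts no subvariety, and therefore extends to a biregular isomorphism $\widetilde{f}$ with $\widetilde{f}_* = f$.
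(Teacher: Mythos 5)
This theorem is quoted in the paper verbatim from Markman's survey (Theorem 1.3 there) and is not proved in the paper, so there is no internal argument to compare yours against; the only meaningful comparison is with the proof in the literature. Your sketch follows that standard route: Huybrechts' results that a bimeromorphic map between IHS manifolds is an isomorphism in codimension one and induces a parallel-transport Hodge isometry on $H^2$, Verbitsky's Global Torelli theorem identifying the fibres of the period map on a connected component of $\Mm_\Gamma$ with classes of inseparable (hence bimeromorphic) marked manifolds, and the principle of Fujiki and Huybrechts that a bimeromorphic map between IHS manifolds carrying a K\"ahler class to a K\"ahler class is biregular. You correctly identify Verbitsky's theorem as the irreducible deep input that must be imported wholesale. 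As a roadmap this is accurate.

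The one place where your argument is genuinely loose is the final step of part (2). Part (1) hands you \emph{some} bimeromorphic map $\phi : X \dashrightarrow Y$, but a priori $\phi_* \neq f$: the two differ by a Hodge-monodromy operator of $Y$, and your phrase about ``correcting $\phi$ by chamber-matching'' hides the real work, which is to show that the specific isometry $f$ (not merely some isometry with the same period data) is geometrically realized. In the actual proof this is handled by Huybrechts' cycle-theoretic refinement of the Torelli statement: $f$ is induced by an effective analytic cycle $Z + \sum_i Y_i$ in $X \times Y$, where $Z$ is the graph of a bimeromorphic map and the $Y_i$ project to proper subvarieties; the hypothesis that $f$ sends a K\"ahler class to a K\"ahler class is then used to kill the contributions of the lower-dimensional components, forcing $f = \phi_*$ with $\phi$ biregular. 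Without some version of this step, your chamber argument only shows that the K\"ahler chambers of $X$ and $Y$ correspond under $f$, not that $f$ itself is induced by an isomorphism.
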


\subsection{Moduli spaces of sheaves and Fourier--Mukai transforms}

A natural way to obtain higher dimensional symplectic holomorphic manifolds is by con\-si\-de\-ring moduli spaces of stable sheaves on a compact algebraic symplectic surface. 

Suppose then that $X$ is a compact algebraic symplectic surface, i.e. a projective $\K3$ or abelian surface. Given a sheaf $F \to X$, its Hilbert polynomial, $P_F(x)$, is completely determined by a Mukai vector
\[
v := (r,D,s) \in \ZZ^{\geq 0} \oplus \Pic(X) \oplus \ZZ,
\]
where $r \in \ZZ^{\geq 0}$ gives the rank $\rk(F)$ of the sheaf, $D \in \Pic(X) \to H^2(X,\ZZ)$ 
determines its first Chern class $c_1(F)$, and $s$ is $\chi(F) - \varepsilon\rk(F)$, where $\varepsilon = 1$ if $X$ is a $\K3$ surface and $\varepsilon = 0$ when $X$ is an abelian surface. We denote by $P_v$ the Hilbert polynomial determined by the Mukai vector $v$. Fix a polarization $H$ and denote by $\M_X(H,v)$ to be the moduli space corre\-pre\-sen\-ting the moduli functor for the classification of $H$-stable sheaves with Hilbert polynomial equal to $P_v$.

In the case of rank $1$, $c_1 = 0$ and $c_2 = n$, the corresponding moduli space is known as the {\it Hilbert scheme} of $n$-points of $X$ and denoted by $\Hilb^n(X)$. When $X$ is an abelian surface with identity $x_0 \in X$, using the Hilbert-Chow map and the group law on $X$, one can produce a natural projection 
\begin{equation} \label{eq definition of Kummer}
a: \Hilb^n(X) \to X. 
\end{equation}
The kernel of this map, $ a^{-1}(x_0)$, is known as the {\it generalized Kummer variety} and denoted by $\Kum^n(X)$. 

\begin{theorem}[\cite{mukai1}, Corollary 0.2]
Given an abelian or $\K3$ surface $X$, then $\M_X(H,v)$ is a smooth quasiprojective scheme with a symplectic structure and each component has dimension $D^2 - 2rs + 2$. In particular, it has a trivial canonical bundle.
\end{theorem}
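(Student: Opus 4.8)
The plan is to carry out the standard deformation-theoretic analysis at a point $[F]\in\M_X(H,v)$, exploiting that an $H$-stable sheaf is simple and that the canonical bundle $K_X$ is trivial on a $\K3$ or abelian surface. The existence of $\M_X(H,v)$ as a quasiprojective scheme comes from the Gieseker--Maruyama GIT construction and is already subsumed in the corepresentability recorded above, so the real content is smoothness, the dimension count, and the symplectic form. First I would recall that the Zariski tangent space to $\M_X(H,v)$ at $[F]$ is $\Ext^1(F,F)$ and that deformations are obstructed in $\Ext^2(F,F)$. Since $F$ is stable it is simple, so $\Hom(F,F)=\CC$; Serre duality on the surface gives $\Ext^i(F,F)\cong\Ext^{2-i}(F,F\otimes K_X)^\vee$, and using $K_X\cong\Oo_X$ this yields $\Ext^2(F,F)\cong\Hom(F,F)^\vee\cong\CC$.

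For smoothness I would invoke the refinement of the obstruction theory due to Artamkin and Mukai: the obstruction class of an infinitesimal deformation lies in the trace-free part $\Ext^2_0(F,F):=\Ker\big(\tr\colon\Ext^2(F,F)\to H^2(X,\Oo_X)\big)$. Because $F$ is simple, the trace map $\Ext^2(F,F)\to H^2(X,\Oo_X)\cong\CC$ is an isomorphism onto the one-dimensional target, whence $\Ext^2_0(F,F)=0$ and all obstructions vanish; thus $\M_X(H,v)$ is smooth. The dimension then equals $\dim_\CC\Ext^1(F,F)$ at every point. To compute it I would use the Mukai--Riemann--Roch formula $\chi(F,F)=\sum_i(-1)^i\dim\Ext^i(F,F)=-\langle v(F),v(F)\rangle$, where $\langle(r,D,s),(r,D,s)\rangle=D^2-2rs$ is the Mukai pairing attached to $v(F)=\ch(F)\sqrt{\mathrm{td}(X)}$. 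Combining $\chi(F,F)=1-\dim\Ext^1(F,F)+1$ with $\chi(F,F)=-(D^2-2rs)$ gives $\dim\Ext^1(F,F)=D^2-2rs+2$, uniformly in the $\K3$ and abelian cases, the bookkeeping of $\varepsilon$ being exactly what makes $s$ the third Mukai coordinate in both situations.

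It remains to produce the symplectic structure. On each tangent space I would take the composite of the Yoneda product with the trace,
\[
\Ext^1(F,F)\times\Ext^1(F,F)\longrightarrow\Ext^2(F,F)\xrightarrow{\ \tr\ }H^2(X,\Oo_X)\cong\CC,
\]
which is antisymmetric because the trace pairing is graded-commutative and both factors have odd degree. Non-degeneracy is precisely Serre duality: the perfect pairing $\Ext^1(F,F)\times\Ext^1(F,F\otimes K_X)\to\CC$ becomes a perfect pairing on $\Ext^1(F,F)$ once $K_X\cong\Oo_X$ is used. These fibrewise forms assemble into a holomorphic $2$-form $\sigma$ on the smooth variety $\M_X(H,v)$, and the main work---the part I expect to be the genuine obstacle---is to show that $\sigma$ is closed, so that it is an honest symplectic form; this is Mukai's local computation, identifying $\sigma$ with a closed form through the Kodaira--Spencer description of the deformations. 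Granting closedness, $\sigma$ is a nowhere-degenerate holomorphic $2$-form, and its top exterior power $\sigma^{\wedge n}$, with $2n=D^2-2rs+2$, is a nowhere-vanishing section of $K_{\M_X(H,v)}$; this shows the canonical bundle is trivial and completes the proof.
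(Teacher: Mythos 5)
The paper does not prove this statement; it is quoted directly from Mukai (\cite{mukai1}, Corollary 0.2), and your argument is precisely the standard one from that source: unobstructedness via the vanishing of the trace-free part $\Ext^2_0(F,F)$ for a simple sheaf on a surface with trivial canonical bundle, the dimension count $\dim\Ext^1(F,F)=-\chi(F,F)+2=D^2-2rs+2$ from Riemann--Roch and the Mukai pairing, and the symplectic form given by Yoneda composition followed by the trace, with nondegeneracy from Serre duality and closedness deferred to Mukai's local computation. The argument is correct, including the bookkeeping of $\varepsilon$ that makes the formula uniform over the $\K3$ and abelian cases, so there is nothing to add.
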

 
Recall that, for a $\K3$ surface, $\Pic(X)$ injects into $H^2(X,\ZZ)$. We say that $D \in \Pic(X)$ is {\it primitive} if it is indivisible as a cohomology class in $H^2(X,\ZZ)$. Let us denote by $\Hh$ the space of all possible polarizations on $X$, we know from \cite[Section 4.C]{huybrechts&lehn} that $v$ defines a locally finite set of real $1$-codimensional submanifolds called {\it $v$-walls}. We say that a {\it $v$-chamber} is each of the connected components of the complement of the union of all the $v$-walls. Note that the union of all $v$-chambers is dense in $\Hh$.

\begin{theorem}[\cite{huybrechts&lehn}, Theorem 6.2.5]  \label{tm M_X irreducible symplectic} 
Let $X$ be a $\K3$ surface and take a Mukai vector $v = (r, D, s) \in \ZZ \oplus \Pic(X) \oplus \ZZ$ with $D$ primitive. Suppose as well that the polarization $H$ on $X$ lies in the interior of a $v$-chamber. 

Then, the moduli space of stable sheaves $\M_X(H,v)$ is an IHS manifold.
\end{theorem}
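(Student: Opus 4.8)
The plan is to verify the four defining properties of an IHS manifold for $\M:=\M_X(H,v)$: smoothness, compactness (whence Kählerness, as $\M$ is projective), simple connectedness, and that $H^0(\M,\Omega_\M^2)$ is one-dimensional and generated by an everywhere non-degenerate holomorphic $2$-form. Two of these are immediate from the results already recalled. By Mukai's theorem, $\M$ is a smooth quasiprojective scheme carrying a holomorphic symplectic structure; since $\M$ is smooth, the symplectic $2$-form furnished by Mukai is everywhere non-degenerate. Thus it only remains to establish compactness, simple connectedness, and the one-dimensionality $h^{2,0}(\M)=1$.

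For compactness I would use both genericity hypotheses at once. Because $D$ is primitive, the Mukai vector $v$ is not a nontrivial multiple of any other integral class, so a strictly $H$-semistable sheaf of class $v$ would possess a Jordan--H\"older factor whose Mukai vector $v'$ is not proportional to $v$; the equality of reduced Hilbert polynomials $p_{v'}=p_v$ with respect to $H$ then cuts out a $v$-wall. As $H$ lies in the interior of a $v$-chamber, this cannot occur, so $H$-stability and $H$-semistability coincide for the class $v$. Consequently $\M$ equals the projective Gieseker--Maruyama moduli space of $H$-semistable sheaves and is therefore compact.

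The core of the argument is the remaining pair of properties, $\pi_1(\M)=0$ and $h^{2,0}(\M)=1$. Both are invariant under smooth proper deformation: the fibres of such a family are diffeomorphic by Ehresmann's theorem, so $\pi_1$ is locally constant, and the Hodge numbers are locally constant by Hodge theory. Since the Hilbert scheme $\Hilb^n(X)$, with $2n=D^2-2rs+2=\dim\M$, is itself an IHS manifold (as recalled in the list above), it suffices to connect $\M$ to such a Hilbert scheme through smooth proper families. Here I would invoke the non-emptiness and deformation-equivalence results of O'Grady and Yoshioka: for a primitive Mukai vector with $D^2-2rs\geq -2$ and a generic polarization, $\M_X(H,v)$ is non-empty and deformation equivalent to $\Hilb^n(X)$. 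The mechanism is to vary the triple $(X,H,v)$, reducing the rank of $v$ by suitable Fourier--Mukai transforms, thereby joining $\M$ to $\Hilb^n(X)$ by a chain of smooth proper families; transporting $\pi_1=0$ and $h^{2,0}=1$ back along this chain finishes the proof.

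The main obstacle is precisely this deformation-equivalence input, which is the deep technical heart of Yoshioka's program; the rest is essentially bookkeeping. A more self-contained route to $h^{2,0}(\M)=1$ avoids deformation by means of Mukai's Hodge isometry between $v^\perp$ in the Mukai lattice $\tilde{H}(X,\ZZ)=H^0\oplus H^2\oplus H^4$ and $H^2(\M,\ZZ)$: the weight-two Hodge structure on $v^\perp$ inherits a one-dimensional $(2,0)$-part from that of $H^2(X,\ZZ)$, giving $h^{2,0}(\M)=1$ at once. Simple connectedness, by contrast, still appears to require the global geometric information carried by the deformation argument, and I expect no purely lattice-theoretic shortcut for it.
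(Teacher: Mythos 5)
The paper gives no proof of this statement---it is imported verbatim from Huybrechts--Lehn, Theorem 6.2.5---so the only meaningful comparison is with the proof in that reference, and your outline reproduces its strategy faithfully: smoothness and the nowhere-degenerate symplectic form from Mukai, compactness because the chamber condition together with primitivity of $D$ forces semistable $=$ stable so that $\M_X(H,v)$ coincides with the projective Gieseker--Maruyama space, and the remaining invariants ($\pi_1=0$ and $h^{2,0}=1$) transported along the O'Grady--Yoshioka chain of deformations and Fourier--Mukai transforms connecting $\M_X(H,v)$ to a Hilbert scheme of points. You also correctly locate the genuine depth of the argument in that deformation-equivalence step; the only point worth adding is that non-emptiness (needed for the statement to be non-vacuous) is part of the same Yoshioka input and is left implicit in the paper's formulation.
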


Whenever $\M_X(H,v)$ is an IHS manifold, it is deformation equivalent either to a Hilbert scheme of points on a $\K3$ surface, or to a generalized Kummer variety.

We recall that the Zariski tangent space at the geometric point of $\M_X(H,v)$ given by the stable sheaf $F$ is 
\[
T_F \left ( \M_X(H,v) \right ) \cong \Ext^1(F, F).
\]
In \cite{mukai1}, the symplectic holomorphic $2$-form $\sigma_{\M}$ on the moduli space $\M_X(H,v)$ is given by the composition of the Yoneda product
\begin{equation} \label{eq yoneda product}
\Ext^{1}(F,F)\times \Ext^{1}(F,F) \longrightarrow \Ext^{2}(F,F),
\end{equation}
with the morphism
\[
\Ext^{2}(F,F)\stackrel{\tr}{\longrightarrow}H^{2}(X,\Oo_{X}) \stackrel{Serre}{\cong}
H^{0}(X,K_{X})^{\vee}\stackrel{\sigma_X}{\longrightarrow} \CC,
\]
defined by the trace, Serre duality and contraction by the holomorpic $2$-form $\sigma_X$ on $X$.


Given a $\K3$ surface $X$, thanks to the work of Mukai \cite{mukai2} one can ensure the existence of a universal family for the moduli problem of stable sheaves.

\begin{theorem}[\cite{mukai2}, Theorems A.5 and A.6; see also \cite{bartocci&bruzzo&ruiperez}, Theorem 4.20]  \label{tm existence of Uu} 
Let $X$ be a $\K3$ surface and $v = (r, D, s) \in \ZZ \oplus \Pic(X) \oplus \ZZ$ a Mukai vector such that the greatest common divisor of the integers $r$, $(D \cdot D')$ and $s$ (where $D'$ runs over all divisors in $\Pic(X)$) is $1$. 

Let $M$ be a connected component of the moduli space $\M_X(H,v)$ of $H$-stable sheaves over $X$ with Mukai vector $v$. Then there exists a universal family $\Uu \to X \times M$, and $M$ is a fine moduli space.
\end{theorem}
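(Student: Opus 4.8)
The plan is to follow Mukai's strategy: produce a quasi-universal family on the component $M$, identify the obstruction to its being genuinely universal as a Brauer class, and kill that class using the arithmetic hypothesis on $v$. Since the parametrized sheaves are $H$-stable, their only automorphisms are the scalars, so the coarse moduli space $M$ is a geometric quotient of an open locus of a $\Quot$ scheme by a $\mathrm{PGL}$-action. The universal quotient sheaf on the $\Quot$ scheme does not descend to $X\times M$ in general, but it always yields a \emph{quasi-universal} family: a coherent sheaf $\Uu$ on $X\times M$, flat over $M$, whose restriction to each fibre $X\times\{m\}$ is isomorphic to $F_m^{\oplus\sigma}$ for a locally constant similitude $\sigma$. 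Equivalently, there is a genuinely universal but $\alpha$-twisted sheaf $\Ee$ on $X\times M$, where $\alpha\in\mathrm{Br}(M)$ is the obstruction class (pulled back along $p_M$); an untwisted universal family exists, and $M$ is fine, precisely when $\alpha=0$.

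Next I would translate the coprimality hypothesis into a statement about the Mukai lattice. On the algebraic Mukai lattice $\ZZ\oplus\Pic(X)\oplus\ZZ$ the Mukai pairing reads $\langle (r_1,D_1,s_1),(r_2,D_2,s_2)\rangle = D_1\cdot D_2 - r_1 s_2 - r_2 s_1$. The set of values taken by $\langle v,w\rangle$ as $w$ ranges over this lattice is the subgroup of $\ZZ$ generated by $\gcd(r,s)$ together with all the intersection numbers $D\cdot D'$, that is, it equals $\gcd\big(r,s,(D\cdot D')\big)\,\ZZ$. By hypothesis this gcd is $1$, so there is a vector $w$ with $\langle v,w\rangle=1$, and such a $w$ can be realized as the Mukai vector (K-theory class) of an actual complex $E^\bullet$ of sheaves on $X$, built from line bundles and skyscraper sheaves.

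Finally I would run the relative $\Ext$ (Fourier--Mukai type) construction against $E^\bullet$ to annihilate $\alpha$. The object $Rp_{M*}\big(\Ee\otimes^{\mathbf L} p_X^* E^\bullet\big)$ is an $\alpha$-twisted perfect complex on $M$, and relative Grothendieck--Riemann--Roch together with Mukai's formula $\chi(F,G)=-\langle v(F),v(G)\rangle$ shows that its Euler characteristic on each fibre equals $\pm\langle v,w\rangle=\pm 1$. Since the rank of any $\alpha$-twisted perfect complex is divisible by the order (indeed the index) of $\alpha$ in $\mathrm{Br}(M)$, this order divides $1$, whence $\alpha=0$. Therefore the twisted universal sheaf $\Ee$ is honest, which produces the desired universal family $\Uu\to X\times M$ and shows that $M$ is fine. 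I expect the main obstacle to be precisely this last step: making the twisted-sheaf formalism rigorous — checking that the relative $\Ext$ object is a perfect $\alpha$-twisted complex with locally constant Euler characteristic equal to $\langle v,w\rangle$ (a base-change and Riemann--Roch computation), and justifying that its rank is divisible by the index of $\alpha$, which is the technical heart of the argument.
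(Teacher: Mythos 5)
The paper does not prove this statement: it is imported verbatim from Mukai (Theorems A.5 and A.6 of \cite{mukai2}; see also \cite{bartocci&bruzzo&ruiperez}, Theorem 4.20), so there is no internal proof to compare against. Your proposal is a correct outline of the standard argument from those references --- a quasi-universal family whose obstruction to being universal is a Brauer class, killed because the coprimality hypothesis produces a class $w$ with $\langle v,w\rangle=1$ and hence a perfect twisted complex of rank $\pm 1$ on $M$ --- i.e.\ Mukai's original proof recast in the equivalent twisted-sheaf language of C\u{a}ld\u{a}raru, and the step you flag as the technical heart (perfectness and base change for the relative pushforward, plus divisibility of its rank by the order of the obstruction via the determinant construction) is indeed where the work lies.
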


Denote by $p_X$ and $p_M$ the projections
\[
\xymatrix{
& X \times M \ar[ld]_{p_X} \ar[rd]^{p_M} &
\\
X & & M.
}
\]
Using the universal family $\Uu \to X \times M$ as a kernel, one can define Fourier--Mukai integral functors
\begin{equation} \label{eq definition of Fourier-Mukai F}
\morph{\Dd^b(X)}{\Dd^b(M)}{F^\bullet}{\R (p_M)_*\left(\Uu \stackrel{\L}{\otimes} p_X^*(F^\bullet) \right)}{}{\Fff}
\end{equation}
and
\begin{equation} \label{eq definition of Fourier-Mukai G}
\morph{\Dd^b(M)}{\Dd^b(X)}{E^\bullet}{\R (p_X)_*\left(\Uu \stackrel{\L}{\otimes} p_M^*(E^\bullet) \right).}{}{\Ggg}
\end{equation}

We say that a $\K3$ surface $X$ is {\it reflexive} if it carries a polarization $H$ and a divisor $D$ such that $H^2 = 2$, $H \cdot D = 0$, $D^2 = -12$, and $D + 2H$ is not effective. The interest on this class of $\K3$ surfaces follows from the next result.

\begin{proposition}[\cite{bartocci&bruzzo&ruiperez}, Proposition 4.35] 
\label{pr-definition-of-reflexive-K3}
If $X$ is a reflexive $\K3$ surface with polarization $H$, and Mukai vector $v = (2, D, -3)$, then there is a locally free rank $2$ universal sheaf $\Uu$ on $X \times \M_X(H,v)$, flat over 
$\M_X(H,v)$, making $\M_X(H,v)$ a fine moduli scheme parameterizing locally free stable sheaves with Mukai vector $v$. Moreover, $\hat{X} := \M_X(H,v)$ is a projective $\K3$ surface, and the Fourier--Mukai functor $\Fff : \Dd^b(X) \to \Dd^b(\hat{X})$ defined by $\Uu$ is a derived equivalence of categories.
\end{proposition}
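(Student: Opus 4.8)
The plan is to isolate the one genuinely geometric point — that every $H$-stable sheaf with Mukai vector $v = (2, D, -3)$ is a $\mu_H$-stable vector bundle — and to deduce the remaining assertions from the general theory of moduli of sheaves on $\K3$ surfaces. First I would record the numerics: for $v = (2, D, -3)$ one has $v^2 = D^2 - 2rs = -12 + 12 = 0$, so by Mukai's theorem \cite{mukai1} every component of $\M_X(H,v)$ has dimension $v^2 + 2 = 2$ and carries a holomorphic symplectic form. Since $r = 2$ and $s = -3$ are coprime, the greatest common divisor of $r$, $s$ and the numbers $D\cdot D'$ equals $1$, so Theorem \ref{tm existence of Uu} already supplies a universal family $\Uu \to X \times \M_X(H,v)$ and shows that $\M_X(H,v)$ is fine. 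Nonemptiness and the fact that $H$ is a generic (chamber) polarization for $v$ — so that $H$-semistability coincides with $H$-stability — are where the non-effectivity of $D + 2H$ first enters, via the theory of $v$-walls of \cite{huybrechts&lehn}; it then remains to analyse an arbitrary stable member $F$, with $v(F) = (2,D,-3)$, of the family.

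The heart of the argument is to show that such an $F$ is $\mu_H$-stable and locally free. Since $H \cdot D = 0$, the slope of $F$ is $0$; a Gieseker-stable sheaf is $\mu_H$-semistable, so a failure of $\mu_H$-stability would produce a saturated rank-one subsheaf $\Oo_X(C) \hookrightarrow F$ with $H \cdot C = 0$ and torsion-free quotient $\Oo_X(D-C)\otimes I_Z$. Comparing Euler characteristics via Riemann--Roch yields the relation $C^2 - C\cdot D = 1 + \operatorname{length}(Z) \ge 1$, while Gieseker stability forces $\chi(\Oo_X(C)) < \tfrac{1}{2}\chi(F) = -\tfrac{1}{2}$, hence $C^2 \le -6$; on the other hand the Hodge index theorem applied inside $\operatorname{NS}(X)$ (on which the intersection form is Lorentzian and $H$ is a positive class) gives $C^2 \le 0$ and $(C-D)^2 \le 0$. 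I would then show that the hypotheses $H^2 = 2$, $D^2 = -12$, $H\cdot D = 0$ together with the non-effectivity of $D + 2H$ make these inequalities contradictory: in the rank-two case $\operatorname{NS}(X) = \langle H, D\rangle$ this is the immediate computation $C = bD$, $C^2 - C\cdot D = 12\,b(1-b) \le 0 < 1$, and the non-effectivity hypothesis is what excludes the remaining classes in general. Verifying this exclusion is the step I expect to be the main obstacle, since it is where all four numerical conditions defining a reflexive $\K3$ surface must be used simultaneously.

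Granting $\mu_H$-stability, local freeness follows by the double-dual trick: if $F$ were not locally free then $0 \to F \to F^{**} \to T \to 0$ with $T$ of finite length $\ell \ge 1$, and $F^{**}$ is again $\mu_H$-stable, hence simple, with $v(F^{**})^2 = D^2 - 4(-3+\ell) = -4\ell$; but simplicity forces $\dim\Ext^1(F^{**},F^{**}) = v(F^{**})^2 + 2 = 2 - 4\ell < 0$, which is absurd. Thus every member of the family is a $\mu_H$-stable bundle, and since $\Uu$ is $\M_X(H,v)$-flat with locally free restriction to each fibre $X \times \{t\}$, the kernel $\Uu$ is itself locally free of rank $2$, making $\M_X(H,v)$ a fine moduli scheme of locally free stable sheaves as claimed.

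Finally, $\hat{X} := \M_X(H,v)$ is a smooth projective surface (projectivity coming from the GIT construction of the moduli space), and by Theorem \ref{tm M_X irreducible symplectic} — whose hypothesis that $H$ lie in a $v$-chamber is guaranteed by the non-effectivity of $D + 2H$ — it is a two-dimensional irreducible holomorphic symplectic manifold, that is, a projective $\K3$ surface. For the last assertion I would invoke the Bridgeland--Mukai criterion: for a fine compact two-dimensional moduli space of stable sheaves on a $\K3$ surface, the integral functor $\Fff$ with kernel the universal sheaf is fully faithful, and since the target $\hat X$ has trivial canonical bundle and the same dimension as $X$, full faithfulness upgrades to an equivalence. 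This produces the derived equivalence $\Fff : \Dd^b(X) \to \Dd^b(\hat{X})$ in the statement.
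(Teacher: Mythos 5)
The paper never proves this statement: it is imported verbatim as Proposition 4.35 of \cite{bartocci&bruzzo&ruiperez}, so there is no internal argument to compare yours against, and I can only judge your reconstruction on its own terms. Most of your reductions check out: $v^2=0$ gives the dimension count; $\gcd(2,-3)=1$ gives a universal family via Theorem \ref{tm existence of Uu}; Gieseker-semistable equals Gieseker-stable here because a destabilizing rank-one subsheaf with the same reduced Hilbert polynomial would need $\chi=-1/2$; the double-dual computation $v(F^{**})^2=-4\ell$ correctly forces local freeness once $\mu_H$-stability is known; and the Bondal--Orlov--Bridgeland criterion does upgrade to an equivalence once one knows $\hat{X}$ is a projective $\K3$ carrying a universal bundle.

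The genuine gap is exactly where you flag it, and it is not a side issue: it is the whole geometric content of the proposition. Your three inequalities --- $C^2\le -6$, $(C-D)^2\le 0$, and $C^2-C\cdot D=1+\operatorname{length}(Z)\ge 1$ --- are \emph{not} mutually contradictory: writing $C^2-C\cdot D=\tfrac{1}{2}\left(C^2+(C-D)^2+12\right)$, the configuration $C^2=-6$, $(C-D)^2=-4$, $C\cdot D=-7$, $\operatorname{length}(Z)=0$ satisfies all of them, so outside the special case $\operatorname{NS}(X)=\langle H,D\rangle$ you have not excluded a $\mu_H$-destabilizing line subbundle. Ruling out these residual classes is precisely where the non-effectivity of $D+2H$ must be converted, via Riemann--Roch and Serre duality, into an effectivity statement about $C$ or $D-C$ that contradicts the hypothesis; you only promise this conversion rather than perform it. The same unestablished control over classes orthogonal to $H$ is what you are implicitly leaning on when you assert that $H$ lies in a $v$-chamber. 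As written, the proposal therefore proves the proposition only for reflexive $\K3$ surfaces of Picard rank two; closing it requires reproducing (or at least citing) the lemmas preceding Proposition 4.35 in \cite{bartocci&bruzzo&ruiperez}, which is in effect what the paper does by quoting the result.
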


We refer to the $\K3$ surfaces $X$ and $\hat{X}$ as {\it Fourier--Mukai partners}. Denote by $p_{\hat{X}}$ the projection $X \times \hat{X} \to \hat{X}$. Setting $\gamma$ to be the $(2,2)$-K\"unneth component of the Chern character $\ch(\Uu)$, one can construct \cite[(4.6)]{bartocci&bruzzo&ruiperez} the morphism
\[
\morph{\Pic(X)}{\Pic(\hat{X})}{D'}{(p_{\hat{X}})_*(D'\cdot \gamma).}{}{\mu}
\]
Accordingly, we denote
\[
\hat{H} := \mu(H)
\]
and, since $v = (2, D, -3)$,
\[
\hat{v} = (2, \mu(D), -3).
\]

\begin{theorem}[\cite{bartocci&bruzzo&ruiperez}, Theorem 4.47] \label{tm X cong hat hat X}
Let $X$, $H$ and $v$ be as in Proposition \ref{pr-definition-of-reflexive-K3}. The starting $\K3$ surface $X$ is isomorphic to $\doublehat{X} := \M_{\hat{X}}(\hat{H}, \hat{v})$. Furthermore, it is a fine moduli space with universal sheaf $\Uu^* \to \hat{X} \times X$.
\end{theorem}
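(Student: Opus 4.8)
The plan is to realise the dual family $\Uu^* \to \hat X \times X$ as a universal family for the moduli functor of $\M_{\hat X}(\hat H, \hat v)$ and then to prove that the resulting classifying morphism $\Phi\colon X \to \doublehat X$ is an isomorphism. The natural first step is to identify the fibres. Since $\Uu$ is locally free of rank $2$ and flat over both factors, for every closed point $x\in X$ the restriction $\Uu_x := \Uu|_{\{x\}\times \hat X}$ is a rank-$2$ locally free sheaf on $\hat X$, and it coincides with $\Fff(\Oo_x)$ because $p_M$ restricts to an isomorphism on $\{x\}\times\hat X$. Dually, $\Uu^*|_{\hat X\times\{x\}} = \Uu_x^\vee$, so the candidate universal family $\Uu^*$ is precisely the family $\{\Uu_x^\vee\}_{x\in X}$.

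First I would check that each $\Uu_x^\vee$ is an $\hat H$-stable sheaf on $\hat X$ with Mukai vector $\hat v = (2,\mu(D),-3)$. The numerical part is a Grothendieck--Riemann--Roch bookkeeping: one restricts $\ch(\Uu)$ to the fibre and dualises, using the reflexivity relations $H^2=2$, $H\cdot D=0$, $D^2=-12$ together with the definition of $\mu$ through the $(2,2)$-K\"unneth component $\gamma$, to read off the rank $2$, the first Chern class $\mu(D)$ and the integer $-3$; in particular the computation yields $\mu(D)^2 = -12$, so that $\dim\doublehat X = \mu(D)^2 - 2\cdot 2\cdot(-3) + 2 = 2$ as required. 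For stability I would use that $\Fff$ is a derived equivalence: full faithfulness gives $\End_{\hat X}(\Uu_x)\cong\End_X(\Oo_x)=\CC$, so $\Uu_x$ and hence $\Uu_x^\vee$ is simple; since $\hat v$ is primitive and isotropic, for a polarisation $\hat H$ in a suitable $\hat v$-chamber a simple sheaf with Mukai vector $\hat v$ admits no destabilising subsheaf, so $\Uu_x^\vee$ is $\hat H$-stable (alternatively one argues directly as in \cite{bartocci&bruzzo&ruiperez}).

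The flat family $\Uu^*$ then induces, by corepresentability of the moduli functor, a morphism $\Phi\colon X \to \doublehat X = \M_{\hat X}(\hat H,\hat v)$, and I would prove it is an isomorphism by again exploiting full faithfulness of $\Fff$. For $x\neq x'$ one has $\Hom_{\hat X}(\Uu_x,\Uu_{x'})\cong\Hom_X(\Oo_x,\Oo_{x'})=0$, hence $\Uu_x^\vee\not\cong\Uu_{x'}^\vee$ and $\Phi$ is injective on points. On tangent spaces, full faithfulness combined with the fact that dualising locally free sheaves is a self-equivalence gives
\[
T_x X = \Ext^1_X(\Oo_x,\Oo_x) \cong \Ext^1_{\hat X}(\Uu_x,\Uu_x) \cong \Ext^1_{\hat X}(\Uu_x^\vee,\Uu_x^\vee) = T_{\Phi(x)}\doublehat X,
\]
so $\Phi$ is \'etale. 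By Theorem \ref{tm M_X irreducible symplectic} applied to $(\hat X,\hat H,\hat v)$, the target $\doublehat X$ is a $\K3$ surface, in particular a connected smooth projective surface; an injective \'etale morphism between smooth projective surfaces of the same dimension with connected target is an isomorphism. Finally, the defining property of $\Phi$ identifies $\Uu^*$ with a universal sheaf on $\hat X\times X=\hat X\times\doublehat X$, which is the remaining claim.

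The step I expect to be the main obstacle is the verification of $\hat H$-stability of the fibres $\Uu_x^\vee$ together with the precise numerical identification $v(\Uu_x^\vee)=(2,\mu(D),-3)$. Simplicity alone does not force stability, so one must either control the normalisation of the universal sheaf carefully or check that $\hat H=\mu(H)$ lies in the interior of a $\hat v$-chamber, where no strictly semistable sheaves with the primitive isotropic vector $\hat v$ exist. Once stability and the numerical matching are secured, the isomorphism $\Phi$ and the identification of $\Uu^*$ as the universal sheaf follow formally from the equivalence $\Fff$ (whose quasi-inverse is the Fourier--Mukai functor $\Ggg$ with kernel $\Uu^\vee[2]$).
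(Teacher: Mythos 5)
The paper offers no proof of this statement: it is quoted directly from \cite{bartocci&bruzzo&ruiperez}, Theorem 4.47, so the only meaningful comparison is with the argument in that reference. Your overall strategy is the same as the standard one there: view $\Uu^{*}$ as a flat family of sheaves on $\hat{X}$ parametrised by $X$, check that its fibres are $\hat{H}$-stable with Mukai vector $\hat{v}$, obtain the classifying morphism $\Phi:X\to\doublehat{X}$, and conclude that an injective \'etale map from a proper surface to a connected smooth surface is an isomorphism. The identification of the fibres with $\Fff(\Oo_x)^{\vee}$, the injectivity and \'etaleness of $\Phi$ via full faithfulness of $\Fff$, and the open-and-closed argument are all correct.

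The genuine gap is the one you flag yourself, and your proposed patch does not close it. Knowing that $\hat{H}$ lies in the interior of a $\hat{v}$-chamber only guarantees that a $\hat{H}$-\emph{semistable} sheaf with primitive Mukai vector $\hat{v}$ is automatically stable; it says nothing about whether the particular simple sheaf $\Uu_x^{\vee}$ is semistable to begin with. Simplicity does not imply semistability, even for a primitive isotropic Mukai vector: Mukai's space of simple sheaves with such a vector is in general a non-separated algebraic space strictly containing the stable locus, so the inference ``simple, hence no destabilising subsheaf for a suitable polarisation'' is false as stated. In the cited reference this is precisely the step that carries the weight of the proof: the $\mu$-stability of the restrictions of $\Uu$ and $\Uu^{*}$ to the fibres, with respect to $H$ and $\hat{H}=\mu(H)$ respectively, is established by a direct argument exploiting the numerical constraints defining a reflexive $\K3$ surface ($H^2=2$, $H\cdot D=0$, $D^2=-12$, $D+2H$ not effective), not as a formal consequence of the derived equivalence. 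A similar caveat applies to your appeal to Theorem \ref{tm M_X irreducible symplectic} for the smoothness and connectedness of $\doublehat{X}$: one must actually verify that $\mu(D)$ is primitive and that $\hat{H}$ lies in the interior of a $\hat{v}$-chamber, which is again part of the content of the reflexivity hypothesis rather than something that comes for free.
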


The integral cohomology of Fourier--Mukai partners is related. 

\begin{theorem}[\cite{mukai2}, Theorem 1.5]  \label{tm FM induces a Hodge isometry}
Suppose that the projective $\K3$ surfaces $X$, $\hat{X}$ are Fourier--Mukai partners with universal sheaf $\Uu \to X \times \hat{X}$. Then one can construct the following map on integral cohomology
\[
\morph{\widetilde{H}(X,\ZZ)}{\widetilde{H}(\hat{X},\ZZ)}{\alpha}{(p_{\hat{X}})_*(p_X^*\alpha )\cdot v(\Uu),}{}{\psi}
\]
which is a Hodge isometry.
\end{theorem}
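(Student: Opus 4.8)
The plan is to verify the two defining properties of a Hodge isometry separately: that $\psi$ respects the weight-two Hodge structures on the Mukai lattices $\widetilde H(X,\ZZ)=H^0(X,\ZZ)\oplus H^2(X,\ZZ)\oplus H^4(X,\ZZ)$, and that it preserves the Mukai pairing $\langle(r,\ell,s),(r',\ell',s')\rangle=\ell\cdot\ell'-rs'-r's$. Throughout I write $v(E)=\ch(E)\sqrt{\mathrm{td}(X)}$ for the Mukai vector, recalling that $\sqrt{\mathrm{td}(X)}=(1,0,1)$ on a $\K3$ surface, so that $v(\Uu)$ is an integral class; this integrality is what makes the correspondence $\psi$ well defined on $\ZZ$-cohomology. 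The bridge between the derived category and cohomology is the compatibility
\[
v(\Fff(E))=\psi(v(E)),\qquad E\in\Dd^b(X),
\]
where $\Fff$ is the Fourier--Mukai functor with kernel $\Uu$; this identity follows from the Grothendieck--Riemann--Roch theorem applied to the two projections from $X\times\hat X$, together with the projection formula, and it is the computational heart of the argument.

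For the Hodge statement I would argue that $v(\Uu)$, being the Mukai vector of a coherent sheaf on $X\times\hat X$, is an algebraic class, hence a Hodge class in $\widetilde H(X\times\hat X)$. A correspondence induced by a Hodge class is automatically a morphism of Hodge structures, so $\psi$ carries $\widetilde H^{2,0}(X)=H^{2,0}(X)$ into $\widetilde H^{2,0}(\hat X)$ and likewise preserves the $(1,1)$ and $(0,2)$ pieces. This already shows $\psi$ is a morphism of integral Hodge structures; it remains to see that it is an isometry and an isomorphism.

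For the isometry I would invoke Mukai's Riemann--Roch identity $\chi(E,F)=-\langle v(E),v(F)\rangle$ on the $\K3$ surface $X$, valid for all $E,F\in\Dd^b(X)$. Since $X$ and $\hat X$ are Fourier--Mukai partners the functor $\Fff$ is a derived equivalence (Proposition \ref{pr-definition-of-reflexive-K3}), so it induces isomorphisms $\Ext^i_X(E,F)\cong\Ext^i_{\hat X}(\Fff E,\Fff F)$ and hence equalities of Euler characteristics $\chi_X(E,F)=\chi_{\hat X}(\Fff E,\Fff F)$. Combining this with the compatibility of the first paragraph gives
\[
\langle v(E),v(F)\rangle=\langle v(\Fff E),v(\Fff F)\rangle=\langle \psi(v(E)),\psi(v(F))\rangle,
\]
so $\psi$ preserves the pairing on all classes of the form $v(E)$; as $\Oo_X$, line bundles and structure sheaves of points already produce Mukai vectors spanning the algebraic part of $\widetilde H(X,\ZZ)$ rationally, $\psi$ is an isometry on the algebraic lattice.

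The hard part will be upgrading this to an isometry of the full lattice, since Mukai vectors of honest objects only ever span the algebraic $(1,1)$-part and never reach the transcendental classes in $H^{2,0}\oplus H^{0,2}$. Here I would use the quasi-inverse functor: $\Fff$ being an equivalence, its adjoint is a Fourier--Mukai transform with dualized kernel $\Uu^\vee[2]$, and by functoriality of cohomological transforms under convolution of kernels the induced map $\psi'$ satisfies $\psi'\circ\psi=\id$. A projection-formula computation then identifies $\psi'$ with the adjoint $\psi^{\dagger}$ of $\psi$ with respect to the Mukai pairing; hence $\psi^{\dagger}=\psi^{-1}$, which is exactly the isometry condition $\langle\psi(\alpha),\psi(\beta)\rangle=\langle\alpha,\beta\rangle$ on all of $\widetilde H(X,\ZZ)$. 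Carrying out this adjoint-kernel identification carefully --- tracking the Mukai-duality involution $(r,\ell,s)\mapsto(r,-\ell,s)$, the shift by $\dim X$, and the signs they produce --- is the main technical obstacle, whereas the Hodge compatibility and the Grothendieck--Riemann--Roch computation are comparatively routine.
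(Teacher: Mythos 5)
The paper does not prove this statement: it is quoted directly from Mukai \cite{mukai2}, Theorem 1.5, so there is no internal proof to compare against. Your outline reproduces the standard argument from the literature (Mukai's original paper, and Huybrechts' book on Fourier--Mukai transforms): Grothendieck--Riemann--Roch to obtain the compatibility $v(\Fff(E))=\psi(v(E))$; algebraicity of the kernel's Mukai vector for the compatibility with the (shifted) weight-two Hodge structure on $\widetilde{H}$; and the identification of the cohomological transform of the adjoint kernel $\Uu^\vee[2]$ with the Mukai-pairing adjoint of $\psi$, combined with $\psi'\circ\psi=\id$ coming from convolution of kernels, to promote the isometry from algebraic classes to the full lattice. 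That architecture is correct and is essentially how the theorem is proved.

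One step is not justified as written: the integrality. You deduce that $v(\Uu)$ is an integral class from the fact that $\sqrt{\mathrm{td}(X)}=(1,0,1)$ on a $\K3$ surface, but $\Uu$ lives on the fourfold $X\times\hat{X}$, where $\ch_3$ and $\ch_4$ of a coherent sheaf carry denominators $6$ and $24$; the remark about $\sqrt{\mathrm{td}}$ of a $\K3$ \emph{surface} does not settle whether $\ch(\Uu)\sqrt{\mathrm{td}(X\times\hat{X})}$ is integral. A priori $\psi$ is only defined with rational coefficients, and proving that it carries $\widetilde{H}(X,\ZZ)$ into $\widetilde{H}(\hat{X},\ZZ)$ --- equivalently, that $v(\Uu)$ is integral on the product, a fact special to $\K3$ surfaces --- is one of the genuinely non-formal points of Mukai's theorem. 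It requires its own argument, e.g.\ analysing the K\"unneth components of $v(\Uu)$ degree by degree using the evenness of the $\K3$ intersection form, or establishing integrality of the image lattice via the inverse transform and $\psi'\circ\psi=\id$. Apart from this, your plan is sound, and your identification of the adjoint-kernel sign bookkeeping as the technical crux of the isometry step is accurate.
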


\subsection{Brane involutions}

Given a K\"ahler manifold $(X,J, \omega)$, we say that submanifold $X'\subset X$ is a A-{\it brane} if it is lagrangian with respect to $\omega$, and that it is B-{\it brane} if it is a complex submanifold with respect to $J$.

Now, let $(X,g,J_1, J_2, J_3)$ be a hyperk\"ahler manifold and denote by $\omega_1$, $\omega_2$ and $\omega_3$ the associated K\"ahler forms. A submanifold $Y\subset X$ is said to be a {\it brane} if it is either an A-brane (i.e. lagrangian) or a B-brane (complex) with respect to each of the $(J_\ell, \omega_\ell)$, for $\ell = 1$, $2$ or $3$. One then specifies the behaviour of $Y$ by saying that $Y$ is a brane of {\it type} $\BBB$, $\BAA$, $\ABA$ or $\AAB$; note that those are all the possible branes.

For instance, a $\BBB$-brane $Y\subset X$ is a hyperk\"ahler submanifold of $X$. In the other cases, if $Y\subset X$ is a $\BAA$, $\ABA$, or $\AAB$-brane if it is a complex-lagrangian submanifold with respect to the complex structure $J_1$, $J_2$ or $J_3$, respectively. 

As illustrated by \cite{baraglia&schaposnik} in the case of moduli spaces of Higgs bundles on curves, and by \cite{franco&jardim&marchesi} in the case of Nakajima quiver varieties, branes often arise as fixed point loci of involutions on a hyperk\"ahler manifold. Indeed, the following result is well known, see \cite{baraglia&schaposnik} or \cite[Section 2.2]{franco&jardim&marchesi} for instance. 

\begin{proposition} \label{pr branes as fixed points}
Let $(X,g,J_{1},J_{2},J_{3})$ be a hyperk\"ahler manifold, and let $i:X\to X$ be an analytic involution which preserves the metric $g$. If $i$ either commutes or anticommutes with each of the complex structures $J_\ell$, then the fixed point submanifold $X^{i}$ is a brane. 
\end{proposition}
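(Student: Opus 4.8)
The plan is to analyze how the involution $i$ interacts with the Riemannian and complex-geometric structures on the fixed locus $X^i$, treating the commuting and anticommuting cases for each $J_\ell$ separately. First I would establish that $X^i$ is a smooth submanifold: since $i$ is an analytic involution preserving the metric $g$, it is in particular an isometry, and the fixed point set of an isometry of a Riemannian manifold is a totally geodesic smooth submanifold whose tangent space at a fixed point $x$ is the $(+1)$-eigenspace of the differential $di_x$ acting on $T_xX$. This is the standard fact I would invoke to get the submanifold structure, after which the whole problem becomes linear algebra on each tangent space $T_xX$ equipped with the endomorphisms $J_1,J_2,J_3$ and the compatible inner product.

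Next I would fix a point $x\in X^i$ and work entirely inside $V:=T_xX$, where $A:=di_x$ is an involution ($A^2=\id$) orthogonal with respect to $g$, and $T_xX^i = V^+ := \ker(A-\id)$ is the $(+1)$-eigenspace, with $V^- := \ker(A+\id)$ its orthogonal complement. The key observation I would exploit is the following: for each $\ell$, if $i$ commutes with $J_\ell$ then $A$ commutes with $J_\ell$, so $J_\ell$ preserves the eigenspace decomposition $V = V^+\oplus V^-$; in particular $J_\ell(V^+)\subseteq V^+$, which means $X^i$ is a $J_\ell$-complex submanifold, i.e.\ a B-brane for the $\ell$-th structure. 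Conversely, if $i$ anticommutes with $J_\ell$ then $A J_\ell = -J_\ell A$, which forces $J_\ell$ to interchange the eigenspaces, $J_\ell(V^+)\subseteq V^-$; I would then deduce that $X^i$ is Lagrangian for $\omega_\ell$, i.e.\ an A-brane. For the Lagrangian claim I would compute $\omega_\ell(u,w)=g(u,J_\ell w)$ for $u,w\in V^+$: since $J_\ell w\in V^-\perp V^+$, this pairing vanishes, so $\omega_\ell|_{V^+}\equiv 0$, and a dimension count (the anticommutation makes $J_\ell$ an isomorphism $V^+\to V^-$, forcing $\dim V^+ = \tfrac12\dim V$) shows $V^+$ is maximal isotropic, hence Lagrangian.

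Assembling these three independent determinations — one for each $\ell\in\{1,2,3\}$ — shows that $X^i$ is simultaneously an A- or B-brane with respect to each of the pairs $(J_\ell,\omega_\ell)$, which is exactly the definition of a brane, and the commute/anticommute pattern dictates which of the four types $\BBB$, $\BAA$, $\ABA$, $\AAB$ occurs. I expect the main obstacle to be the careful justification that the fixed locus of the isometry $i$ is genuinely a smooth (totally geodesic) submanifold with the expected tangent space, since the rest of the argument is pointwise linear algebra; however this is a classical result about isometries and involutions, so in practice the proof should reduce to citing it and then performing the eigenspace bookkeeping described above. A minor technical point worth flagging is that the quaternionic relations $J_1J_2J_3=-1$ impose compatibility constraints, so the commute/anticommute behaviours with respect to the three structures cannot be fully independent; one should verify that the four patterns listed are indeed the only consistent possibilities, which again is a short linear-algebra check using $J_3=J_1J_2$.
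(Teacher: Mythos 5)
Your argument is correct and complete: the paper itself gives no proof of this proposition, merely pointing to \cite{baraglia&schaposnik} and \cite[Section 2.2]{franco&jardim&marchesi}, and your route (the fixed locus of an isometric involution is a totally geodesic submanifold with tangent space the $(+1)$-eigenspace of $di_x$, followed by the eigenspace bookkeeping for each $J_\ell$ in the commuting and anticommuting cases) is exactly the standard argument those references supply. Your closing remark that the quaternion relation forces $\epsilon_3=\epsilon_1\epsilon_2$ for the signs $\epsilon_\ell=\pm1$, so that only the four types $\BBB$, $\BAA$, $\ABA$, $\AAB$ can occur, is also correct and matches the paper's discussion following the proposition.
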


If $i$ commutes with $J_\ell$, then $X^i$ is a complex submanifold of $(X,J_\ell)$; if $i$ anticommutes with $J_\ell$ then
$X^i$ is lagrangian with respect to $\omega_\ell$. This specifies whether $X^i$ is a $\BBB$, $\BAA$, $\ABA$ or a $\AAB$-brane.
We refer to an involution $i : X \to X$ satisfying the hypothesis of Proposition \ref{pr branes as fixed points} as a \emph{brane involution}. In particular, if the fixed point locus $X^i$ is a $\BBB$, $\BAA$, $\ABA$ or a $\AAB$-brane, we say that $i$ is an involution of \emph{type} $\BBB$, $\BAA$, $\ABA$ or $\AAB$, respectively.

Considering an IHS manifold $(X, J, \omega, \sigma)$, one can study the behaviour of these structures under a given isometric involution $i : X \to X$. Accordingly, we say that $i$ is
\begin{itemize}
\item \emph{holomorphic symplectic}, if it is holomorphic, and $i^{*}(\sigma)=\sigma$;

\item \emph{holomorphic antisymplectic}, if it is holomorphic, and $i^{*}(\sigma)=-\sigma$;

\item \emph{antiholomorphic symplectic}, if it is antiholomorphic, and $i^*(\sigma)=\overline{\sigma}$;

\item \emph{antiholomorphic antisymplectic}, if it is antiholomorphic, and $i^{*}(\sigma)=-\overline{\sigma}$.
\end{itemize}
Note, in addition, that $i^* \omega = \omega$ if $i$ is holomorphic, and $i^* \omega = - \omega$ if $i$ is antiholomorphic.

It is not difficult to check that the involutions above are brane involutions.

\begin{proposition} \label{pr biswas&wilkin}
Let $(X, g, J_1, J_2, J_3)$ be a hyperk\"ahler manifold with associated IHS manifold $(X, J_1, \omega_1, \sigma_1)$. Let $i : X \to X$ be an isometric involution; the fixed point locus $X^i$ is
\begin{itemize}
\item a $\BBB$-brane if $i$ is holomorphic symplectic;
\item a $\BAA$-brane if $i$ is holomorphic antisymplectic;
\item a $\ABA$-brane if $i$ is antiholomorphic symplectic; and
\item a $\AAB$-brane if $i$ is antiholomorphic antisymplectic.
\end{itemize}
\end{proposition}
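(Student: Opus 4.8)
The plan is to reduce the statement entirely to Proposition \ref{pr branes as fixed points} by deciding, in each of the four cases, whether $i$ commutes or anticommutes with each of $J_1$, $J_2$, $J_3$. The essential input is the following elementary equivalence for an \emph{isometric involution}: since $i_*$ is at each point orthogonal with respect to $g$ and satisfies $i_*^{\,2}=\id$, it is self-adjoint, so from $\omega_\ell(\cdot,\cdot)=g(\cdot,J_\ell(\cdot))$ one computes
\[
i^*\omega_\ell(u,v)=g(i_*u,J_\ell i_*v)=g\bigl(u,\,i_*J_\ell i_*v\bigr),
\]
whence $i^*\omega_\ell=\omega_\ell \iff i_*J_\ell i_*=J_\ell \iff i_*J_\ell=J_\ell i_*$ and, likewise, $i^*\omega_\ell=-\omega_\ell \iff i_*J_\ell=-J_\ell i_*$. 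The first step is to record this equivalence, which is exactly where the isometry hypothesis is used. By Proposition \ref{pr branes as fixed points}, $i^*\omega_\ell=\omega_\ell$ then makes $X^i$ complex (letter $\mathrm{B}$) for $J_\ell$, while $i^*\omega_\ell=-\omega_\ell$ makes it lagrangian (letter $\mathrm{A}$) for $\omega_\ell$.

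The second step is to translate the hypotheses on the holomorphic symplectic form $\sigma_1=\omega_2+\sqrt{-1}\,\omega_3$ into signs for $i^*\omega_2$ and $i^*\omega_3$. Pullback is $\CC$-linear on complex-valued forms and sends real forms to real forms, so $i^*\sigma_1=i^*\omega_2+\sqrt{-1}\,i^*\omega_3$ with both $i^*\omega_2$ and $i^*\omega_3$ real, regardless of whether $i$ is holomorphic or antiholomorphic. Comparing real and imaginary parts in each defining condition (recalling $\overline{\sigma_1}=\omega_2-\sqrt{-1}\,\omega_3$) then gives: $i^*\omega_2=\omega_2,\ i^*\omega_3=\omega_3$ when $i^*\sigma_1=\sigma_1$; $i^*\omega_2=-\omega_2,\ i^*\omega_3=-\omega_3$ when $i^*\sigma_1=-\sigma_1$; $i^*\omega_2=\omega_2,\ i^*\omega_3=-\omega_3$ when $i^*\sigma_1=\overline{\sigma_1}$; and $i^*\omega_2=-\omega_2,\ i^*\omega_3=\omega_3$ when $i^*\sigma_1=-\overline{\sigma_1}$.

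Finally I would combine these with the sign on $\omega_1$ recorded just before the statement, namely $i^*\omega_1=\omega_1$ when $i$ is holomorphic and $i^*\omega_1=-\omega_1$ when $i$ is antiholomorphic, and read off the three letters via the equivalence of the first step. This yields $(\mathrm{B},\mathrm{B},\mathrm{B})$, $(\mathrm{B},\mathrm{A},\mathrm{A})$, $(\mathrm{A},\mathrm{B},\mathrm{A})$ and $(\mathrm{A},\mathrm{A},\mathrm{B})$ in the holomorphic symplectic, holomorphic antisymplectic, antiholomorphic symplectic and antiholomorphic antisymplectic cases respectively, which are exactly the types $\BBB$, $\BAA$, $\ABA$ and $\AAB$ claimed. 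Note that in every case $i$ commutes or anticommutes with each $J_\ell$, so Proposition \ref{pr branes as fixed points} genuinely applies and $X^i$ is a brane of the asserted type. I do not expect a real obstacle here: the only non-formal ingredient is the equivalence of the first step, and the rest is bookkeeping of real and imaginary parts; the one point demanding care is the sign conventions together with the notational clash between the involution $i$ and the imaginary unit in $\sigma_1=\omega_2+\sqrt{-1}\,\omega_3$.
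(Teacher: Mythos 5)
Your proof is correct, but it is genuinely more self-contained than the paper's: the paper disposes of the two holomorphic cases as ``standard'' and outsources the two antiholomorphic cases to Theorem 1.1 of Biswas--Wilkin, whereas you derive all four cases uniformly from Proposition \ref{pr branes as fixed points}. The two nontrivial ingredients you supply --- the equivalence $i^*\omega_\ell=\pm\omega_\ell \iff i_*J_\ell=\pm J_\ell i_*$ for an isometric involution (where $i_*^{-1}=i_*$ as a bundle map makes $i_*$ self-adjoint for $g$), and the decomposition of the four conditions on $i^*\sigma_1$ into signs for $i^*\omega_2$ and $i^*\omega_3$ via real and imaginary parts --- are exactly the content hidden in the citation, and your sign bookkeeping matches the claimed types $\BBB$, $\BAA$, $\ABA$, $\AAB$ in all four cases. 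What your route buys is a single uniform argument with no external input beyond Proposition \ref{pr branes as fixed points} (and the remark following it identifying commuting with ``complex'' and anticommuting with ``lagrangian''); what the paper's route buys is brevity and a pointer to a reference that also handles the genuinely geometric point (total geodesy and half-dimensionality of the fixed locus in the lagrangian cases), which in your write-up is absorbed into the black box of Proposition \ref{pr branes as fixed points}. The one place to be careful, which you handle correctly, is that $i_*$ moves base points, so the identity $g(i_*u,J_\ell i_*v)=g(u,i_*J_\ell i_*v)$ should be read as an identity of bundle maps covering $i$, using $i_*^{-1}=i_*$.
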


\begin{proof}
The proof for the cases of holomorphic symplectic and holomorphic antisymplectic involutions are standard. The cases of antiholomorphic symplectic and antiholomorphic antisymplectic follow from \cite[Theorem 1.1]{biswas&wilkin}.
\end{proof}

According to Proposition \ref{pr biswas&wilkin}, we sometimes say that the type of the involution is the property of being (anti)holomorphic (anti)symplectic.

\begin{remark} \label{rm type in terms T and S}
For another point of view, denote by $i^* : H^2(X,\ZZ) \to H^2(X,\ZZ)$ the involution induced in cohomology. Set $T:=H^{2}(X,\Z)^{i^{*}}$ and $S:=T^\bot$, and note that $i^*$ acts trivially on $T$ while inverts $S$. It is straightforward to check that
\begin{itemize}
\item $\omega\in T \otimes \RR$, $\Ree \sigma \in T \otimes \RR$ and $\Ima \sigma \in T \otimes \RR$ for a $\BBB$-involution; 

\item  $\omega\in T \otimes \RR$, $\Ree \sigma \in S \otimes \RR$ and $\Ima \sigma \in S \otimes \RR$ for a $\BAA$-involution; 

\item $\omega\in S \otimes \RR$, $\Ree \sigma \in T \otimes \RR$ and $\Ima \sigma \in S \otimes \RR$ for an $\ABA$-involution; and

\item  $\omega\in S \otimes \RR$, $\Ree \sigma \in S \otimes \RR$ and $\Ima \sigma \in T \otimes \RR$ for an $\AAB$-involution.
\end{itemize}
\end{remark}

\subsection{Involutions on $\K3$ surfaces and Nikulin's description}\label{sc Nikulin}

After Nikulin's work \cite{Nikulin0, Lattice, Nikulin, NikulinIMU}, one has a precise description of holomorphic symplectic and antisymplectic involutions on a $\K3$ surface.

Given a holomorphic symplectic involution $i : X \to X$ on a $\K3$-surface, Nikulin showed \cite{Nikulin0} that it has a unique action on $H^2(X,\ZZ)$, up to isometry. He also described the fixed locus as $8$ isolated points (see also \cite{Sarti}),
\begin{equation} \label{eq fixed locus of a BBB-brane}
X^i = \bigsqcup_{k = 1}^8 \{ p_k \}.
\end{equation}

For a lattice $T$, we denote its rank by $r(T)$. The signature of $T$ will be denoted by $\sign(T)=(b^{+}(T),b^{-}(T))$.
A lattice $T$ is \textit{Lorentzian} if $\sign(T)=(1,r(T)-1)$. We denote by $T^{*}$ the dual of $T$ and by $A_{T}=T^{*}/ T$ the discriminant group. An even lattice $T$ is 2-\textit{elementary} if there is an integer $a$ with $A_{T}\simeq(\ZZ/2\ZZ)^{a}$; then we set $a(T)=\dim_{\ZZ/2\ZZ}(A_{T})$. We also define $\delta(T)=0$ if $x^{2}\in \ZZ$ for all $x\in T^{*}$, otherwise $\delta(T) = 1$. The triple $(r(T),a(T),\delta(T))$ determines the isometry class of an indefinite even Lorentzian 2-elementary lattice by \cite[Theorem 3.6.2]{Lattice}.

Nikulin \cite{Nikulin0, Nikulin} proved the following correspondence 
\begin{equation*}
\left\{
\begin{array}{rl}
&\textnormal{$\K3$ surface $X$ endowed}\\
&\textnormal{with a holomorphic}\\
&\textnormal{antisymplectic involution}
\end{array}\right\}\leftrightarrow 
\left\{
\begin{array}{rl}
&\textnormal{even Lorentzian 2-elementary}\\
&\textnormal{sublattices of $U^3\oplus E_{8}(-1)^2$}
\end{array}
\right\}
\end{equation*}
\begin{equation*}
(X,i)\ \ \ \ \ \ \ \ \ \ \ \ \ \ \ \ \ \leftrightarrow\ \ \ \ \ \ \ \ \ \ \  H^{2}(X,\Z)^{i^{*}}.
\end{equation*}

Then, to each holomorphic antisymplectic involution $i : X \to X$, we associate the values $(r,a,\delta)$ of the sublattice $H^{2}(X,\Z)^{i^{*}}$. Nikulin showed that there are 75 cases of 2-elementary sublattices of $U^3\oplus E_{8}(-1)^2$
with signature $(1,r)$, $r\leq 19$, represented in Figure \ref{fig BAA}. 

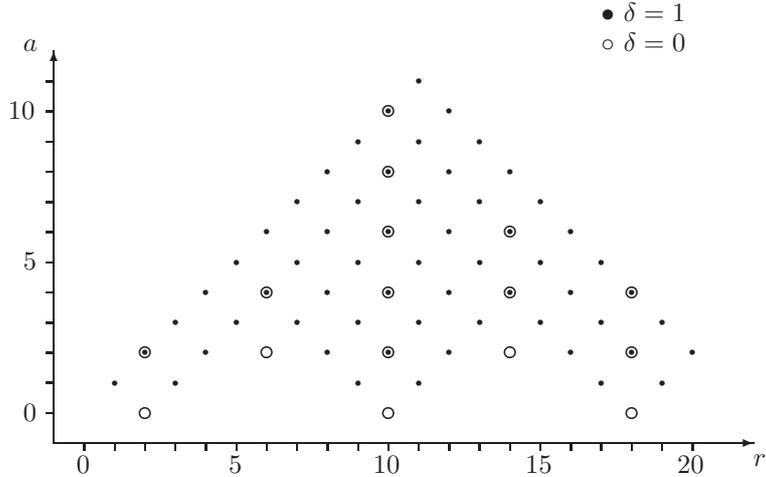
\begin{figure}[H]
\setlength{\unitlength}{2cm}
\centering
$$\begin{picture}(2,3)(1,0)
\put(-0.2,-0.2){\vector(1,0){4.6}}
\put(-0.2,-0.2){\vector(0,1){2.6}}

\put(0,-0.25){\line(0,1){0.05}}
\put(0.2,-0.25){\line(0,1){0.05}}
\put(0.4,-0.25){\line(0,1){0.05}}
\put(0.6,-0.25){\line(0,1){0.05}}
\put(0.8,-0.25){\line(0,1){0.05}}
\put(1,-0.25){\line(0,1){0.05}}
\put(1.2,-0.25){\line(0,1){0.05}}
\put(1.4,-0.25){\line(0,1){0.05}}
\put(1.6,-0.25){\line(0,1){0.05}}
\put(1.8,-0.25){\line(0,1){0.05}}
\put(2,-0.25){\line(0,1){0.05}}
\put(2.2,-0.25){\line(0,1){0.05}}
\put(2.4,-0.25){\line(0,1){0.05}}
\put(2.6,-0.25){\line(0,1){0.05}}
\put(2.8,-0.25){\line(0,1){0.05}}
\put(3,-0.25){\line(0,1){0.05}}
\put(3.2,-0.25){\line(0,1){0.05}}
\put(3.4,-0.25){\line(0,1){0.05}}
\put(3.6,-0.25){\line(0,1){0.05}}
\put(3.8,-0.25){\line(0,1){0.05}}
\put(4,-0.25){\line(0,1){0.05}}

\put(-0.05,-0.4){$0$}
\put(0.95,-0.4){$5$}
\put(1.90,-0.4){$10$}
\put(2.90,-0.4){$15$}
\put(3.90,-0.4){$20$}
\put(4.4,-0.35){$r$}

\put(-0.25,0){\line(1,0){0.05}}
\put(-0.25,0.2){\line(1,0){0.05}}
\put(-0.25,0.4){\line(1,0){0.05}}
\put(-0.25,0.6){\line(1,0){0.05}}
\put(-0.25,0.8){\line(1,0){0.05}}
\put(-0.25,1){\line(1,0){0.05}}
\put(-0.25,1.2){\line(1,0){0.05}}
\put(-0.25,1.4){\line(1,0){0.05}}
\put(-0.25,1.6){\line(1,0){0.05}}
\put(-0.25,1.8){\line(1,0){0.05}}
\put(-0.25,2){\line(1,0){0.05}}
\put(-0.25,2.2){\line(1,0){0.05}}

\put(-0.4,-0.05){$0$}
\put(-0.4,0.95){$5$}
\put(-0.5,1.95){$10$}
\put(-0.4,2.4){$a$}


\put(0.4,0){\circle{.07}}
\put(2,0){\circle{.07}}
\put(3.6,0){\circle{.07}}
\put(2,2){\circle{.07}}
\put(2,1.6){\circle{.07}}
\put(2,1.2){\circle{.07}}
\put(2,0.8){\circle{.07}}
\put(2,0.4){\circle{.07}}
\put(0.4,0.4){\circle{.07}}
\put(1.2,0.4){\circle{.07}}
\put(1.2,0.8){\circle{.07}}
\put(2.8,0.4){\circle{.07}}
\put(2.8,0.8){\circle{.07}}
\put(2.8,1.2){\circle{.07}}
\put(3.6,0.4){\circle{.07}}
\put(3.6,0.8){\circle{.07}}

\put(0.2,0.2){\circle*{.03}}
\put(0.6,0.2){\circle*{.03}}
\put(1.8,0.2){\circle*{.03}}
\put(2.2,0.2){\circle*{.03}}
\put(3.4,0.2){\circle*{.03}}
\put(3.8,0.2){\circle*{.03}}

\put(0.4,0.4){\circle*{.03}}
\put(0.8,0.4){\circle*{.03}}
\put(1.6,0.4){\circle*{.03}}
\put(2,0.4){\circle*{.03}}
\put(2.4,0.4){\circle*{.03}}
\put(3.2,0.4){\circle*{.03}}
\put(3.6,0.4){\circle*{.03}}
\put(4,0.4){\circle*{.03}}

\put(0.6,0.6){\circle*{.03}}
\put(1,0.6){\circle*{.03}}
\put(1.4,0.6){\circle*{.03}}
\put(1.8,0.6){\circle*{.03}}
\put(2.2,0.6){\circle*{.03}}
\put(2.6,0.6){\circle*{.03}}
\put(3,0.6){\circle*{.03}}
\put(3.4,0.6){\circle*{.03}}
\put(3.8,0.6){\circle*{.03}}

\put(0.8,0.8){\circle*{.03}}
\put(1.2,0.8){\circle*{.03}}
\put(1.6,0.8){\circle*{.03}}
\put(2,0.8){\circle*{.03}}
\put(2.4,0.8){\circle*{.03}}
\put(2.8,0.8){\circle*{.03}}
\put(3.2,0.8){\circle*{.03}}
\put(3.6,0.8){\circle*{.03}}

\put(1,1){\circle*{.03}}
\put(1.4,1){\circle*{.03}}
\put(1.8,1){\circle*{.03}}
\put(2.2,1){\circle*{.03}}
\put(2.6,1){\circle*{.03}}
\put(3,1){\circle*{.03}}
\put(3.4,1){\circle*{.03}}

\put(1.2,1.2){\circle*{.03}}
\put(1.6,1.2){\circle*{.03}}
\put(2,1.2){\circle*{.03}}
\put(2.4,1.2){\circle*{.03}}
\put(2.8,1.2){\circle*{.03}}
\put(3.2,1.2){\circle*{.03}}

\put(1.4,1.4){\circle*{.03}}
\put(1.8,1.4){\circle*{.03}}
\put(2.2,1.4){\circle*{.03}}
\put(2.6,1.4){\circle*{.03}}
\put(3,1.4){\circle*{.03}}

\put(1.6,1.6){\circle*{.03}}
\put(2,1.6){\circle*{.03}}
\put(2.4,1.6){\circle*{.03}}
\put(2.8,1.6){\circle*{.03}}

\put(1.8,1.8){\circle*{.03}}
\put(2.2,1.8){\circle*{.03}}
\put(2.6,1.8){\circle*{.03}}

\put(2,2){\circle*{.03}}
\put(2.4,2){\circle*{.03}}

\put(2.2,2.2){\circle*{.03}}

\put(0.4,0){\circle{.07}}
\put(2,0){\circle{.07}}
\put(3.6,0){\circle{.07}}
\put(2,2){\circle{.07}}
\put(2,1.6){\circle{.07}}
\put(2,1.2){\circle{.07}}
\put(2,0.8){\circle{.07}}
\put(2,0.4){\circle{.07}}
\put(0.4,0.4){\circle{.07}}
\put(1.2,0.4){\circle{.07}}
\put(1.2,0.8){\circle{.07}}
\put(2.8,0.4){\circle{.07}}
\put(2.8,0.8){\circle{.07}}
\put(2.8,1.2){\circle{.07}}
\put(3.6,0.4){\circle{.07}}
\put(3.6,0.8){\circle{.07}}

\put(3.4,2.6){$\bullet\ \delta=1$}
\put(3.4,2.4){$\circ\ \delta=0$}
\end{picture}$$\vspace{0.2cm}

\caption{$\BAA$-involutions}\label{fig BAA}
\end{figure}

The fixed locus of an involution $i$ can be described in terms of its invariants $(r,a,\delta)$.

\begin{theorem}[\cite{Nikulin0}] \label{tm nikulin description}
The fixed locus of a holomorphic antisymplectic involution
on a $\K3$ surface is
\begin{enumerate}
\item empty if $r = 10$, $a = 10$ and $\delta = 0$,
\item the disjoint union of two elliptic curves if $r = 10$, $a = 8$ and $\delta = 0$, 
\item the disjoint union of a curve of genus $g$ and $\ell$ rational curves otherwise, where $g = (22 - r - a)/2$ and $\ell = (r - a)/2$.
\end{enumerate}
\end{theorem}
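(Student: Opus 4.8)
The plan is to extract two numerical identities linking the topology of $X^{i}$ to the triple $(r,a,\delta)$ attached to $T=H^{2}(X,\ZZ)^{i^{*}}$, and then to isolate by hand the lattices where these identities fail to pin down the geometry. First, since $i$ is holomorphic its fixed locus $X^{i}$ is a smooth complex submanifold, and at each fixed point $p$ the differential $d i_{p}$ is an involution of $T_{p}X$ with eigenvalues in $\{+1,-1\}$; antisymplecticity, $i^{*}\sigma=-\sigma$, forces the two eigenvalues to multiply to $-1$, so exactly one of them equals $+1$ and $X^{i}$ is a (possibly empty) disjoint union of smooth curves. Write $X^{i}=C\sqcup E_{1}\sqcup\dots\sqcup E_{\ell}$ with $g=g(C)$ and the $E_{j}$ rational; this is the shape to be justified. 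By the topological Lefschetz fixed point theorem, and the fact that on a $\K3$ surface $i^{*}$ acts as $+1$ on $H^{0}$ and $H^{4}$ and with invariant/anti-invariant ranks $r$ and $22-r$ on $H^{2}$, one obtains $\chi(X^{i})=2+\big(r-(22-r)\big)=2r-20$. Comparing with $\chi(X^{i})=(2-2g)+2\ell$ yields the first relation
\[
g-\ell=11-r.
\]

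For the second identity I would pass to $\pi:X\to Y:=X/i$. If $X^{i}\neq\varnothing$ then $Y$ is smooth and $\pi$ is a double cover branched along $B:=\pi(X^{i})\cong X^{i}$; antisymplecticity gives $H^{0}(Y,\Omega^{2}_{Y})=H^{0}(X,\Omega^{2}_{X})^{i^{*}}=0$, and together with $q(Y)=0$ and simple connectivity this forces $Y$ to be a rational surface. From $K_{X}=\pi^{*}(K_{Y}+L)=0$, where $2L=B$, one reads off $B\in|-2K_{Y}|$, whence by adjunction on $Y$ and $r=b_{2}(Y)=10-K_{Y}^{2}$ the arithmetic genus is $p_{a}(B)=K_{Y}^{2}+1=11-r$, recovering the first relation and confirming internal consistency.

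The independent second relation is where the real work lies; it is obtained by computing the discriminant group $A_{T}\simeq(\ZZ/2\ZZ)^{a}$ together with its quadratic form $q_{T}:A_{T}\to\QQ/2\ZZ$. The pullback $\pi^{*}H^{2}(Y,\ZZ)$ sits with finite $2$-power index in $T$, and the resulting $2$-elementary quotient is governed by the connected components of the branch curve $B$ on $Y$; a careful count of the generators contributed by these components produces
\[
g+\ell=11-a,
\]
while $\delta$ records whether $q_{T}$ is $\ZZ/2\ZZ$-valued, i.e. a parity condition on the self-intersections of the components of $B$. This discriminant-form computation — equivalently, Nikulin's determination of the isometry class of the $2$-elementary hyperbolic lattice $T$ from its geometric realisation — is the main obstacle of the argument. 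Solving $g-\ell=11-r$ together with $g+\ell=11-a$ gives $g=(22-r-a)/2$ and $\ell=(r-a)/2$, which is case (3).

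It remains to treat the two exceptional lattices, both with $r=10$ and $\delta=0$. Each satisfies $\chi(X^{i})=0$ and $p_{a}(B)=1$, so they are numerically indistinguishable from the generic prediction and must be handled geometrically; it is precisely the invariant $\delta$ that separates them from the neighbouring case-(3) lattices. When $a=10$ one has $T\simeq U(2)\oplus E_{8}(-2)$ and the involution is fixed-point-free — the classical Enriques involution — so $\pi:X\to Y$ is unramified, $Y$ is an \emph{Enriques} surface rather than a rational one, and $X^{i}=\varnothing$; this is exactly the configuration in which the rational-surface computation of the previous paragraph breaks down. When $a=8$ the half-classes in $A_{T}$ degenerate and the predicted genus-$2$-plus-rational-curve configuration is replaced by two disjoint elliptic curves of the same total Euler characteristic, which one confirms by exhibiting the $i$-invariant elliptic fibration on $X$ carrying them. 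Nikulin's classification of even $2$-elementary hyperbolic lattices by $(r,a,\delta)$ then guarantees that these are the only exceptions, completing the proof.
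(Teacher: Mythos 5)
The paper does not prove this statement at all: it is imported verbatim from Nikulin \cite{Nikulin0} as a known classification result, so there is no internal proof to compare against. Judged on its own, your outline follows the standard route (smooth fixed curves from antisymplecticity, topological Lefschetz for $\chi(X^i)=2r-20$, the rational quotient surface with branch divisor $B\in|-2K_Y|$), and those parts are essentially sound, modulo one slip: ``$q(Y)=0$ plus simple connectivity forces $Y$ rational'' is false as stated (Barlow surfaces are simply connected of general type with $p_g=q=0$); you need Castelnuovo's criterion $q=P_2=0$, where $P_2(Y)=0$ follows because $-2K_Y$ is effective and nonzero.

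The genuine gap is that the two steps carrying all the content are asserted rather than proved. First, the relation $g+\ell=11-a$ is exactly where the theorem lives, and you explicitly defer it (``the main obstacle of the argument'') to an unperformed discriminant-form computation; without it you have only the single Lefschetz identity, which determines neither $g$ nor $\ell$. The clean way to obtain it is Smith theory with $\ZZ/2$-coefficients, giving $\sum_i\dim H^i(X^i,\ZZ/2)=24-2a$ when $X^i\neq\emptyset$, and that equality itself requires an argument identifying $a$ with the relevant rank. Second, the exceptional cases are not actually separated from the generic one: both numerical identities are satisfied equally well by one elliptic curve and by the empty set when $(r,a)=(10,10)$, and equally well by two elliptic curves and by a genus-$2$ curve plus a rational curve when $(r,a)=(10,8)$, so nothing you have written rules out the wrong configuration. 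The invariant $\delta$ is the only thing that can break these ties, and your proposal never establishes a precise geometric meaning for $\delta$ (Nikulin's criterion is, roughly, that $\delta=0$ if and only if the class $[X^i]$ is divisible by $2$ in $H^2(X,\ZZ)$, which is what forces the branch divisor $B\in|-2K_Y|$ to split as claimed). Until the $g+\ell$ identity and the $\delta$-analysis are actually carried out, the proposal is a correct roadmap but not a proof.
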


\begin{remark} \label{rm other brane involutions in K3}
We can also use Nikulin's result to study anti\-ho\-lo\-morphic (anti)sym\-plec\-tic involutions $i : X \to X$ on a $\K3$ surface $X$. To do so, we consider a hyperk\"ahler rotation of our $\K3$ that sends $i$ to a holomorphic antisymplectic involution. It is then clear that $H^{2}(X,\Z)^{i^*}$ is also a 2-elementary Lorentzian sublattice of $U^3\oplus E_{8}(-1)^{2}$ and antiholomorphic (anti)symplectic involutions are also classified by the three integers $(r,a,\delta)$ of Figure \ref{fig BAA}. The fixed locus $X^i$ is the image of the hyperk\"ahler rotation of the fixed locus of a holomorphic antisymplectic involution described by Theorem \ref{tm nikulin description}. 
\end{remark}

\subsection{Involutions on $\K3^{[2]}$-type manifolds}

Holomorphic antisymplectic involutions on a $\K3^{[2]}$-type manifold can be classified by invariant lattices thanks to the work of Boissi\`ere, Camere and Sarti \cite{BCS}. 

In this case, $H^{2}(X,\ZZ) \cong \Gamma:=U^3\oplus E_8(-1)^2\oplus(-2)$, and the discriminant group $A_\Gamma=\Gamma^*/\Gamma$ is isomorphic to $\ZZ/2\ZZ$. Therefore, contrary to the $\K3$ case, for a $\K3^{[2]}$-type manifold $X$ the lattice $\Gamma$ is not unimodular, and the embedding of a Lorentzian 2-elementary sublattice $T$ into $\Gamma=U^3\oplus E_8(-1)^2\oplus(-2)$ is not unique: it depends on some parameters of $S=T^\bot$.

\begin{proposition}[\cite{BCS}, Proposition 8.2] \label{classificationTS}
Let $T$ be a Lorentzian 2-elementary lattice of signature $(1,t)$ and $\Gamma=U^3\oplus E_8(-1)^2\oplus(-2)$. Assume that $T$ admits a primitive embedding in $\Gamma$. We have that
\begin{itemize}
\item[(i)]
If there is no $x\in A_T$ such that $x^2=\frac{3}{2}\ \mod 2\Z$, then $T$ admits a unique primitive embedding into $\Gamma$ whose orthogonal complement is a 2-elementary lattice $S$ of signature $(2,20-t)$, $a(S)=a(T)+1$ and $\delta(S)=1$.
\item[(ii)]
Otherwise, non-isomorphic primitive embeddings of $T$ into $\Gamma$ are in 1-1 corres\-pon\-dence with non-isometric choices of 2-elementary lattice $S$ of signature $(2,20-t)$ with either $a(S)=a(T)-1$, or $a(S)=a(T)+1$ and $\delta(S)=1$.
\end{itemize}
\end{proposition}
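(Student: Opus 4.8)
The statement is a lattice-theoretic classification, and the natural engine is Nikulin's theory of discriminant forms: his criterion for primitive embeddings of even lattices \cite[Proposition 1.15.1]{Lattice}, combined with the classification of indefinite even $2$-elementary lattices by the triple $(r,a,\delta)$ already quoted above as \cite[Theorem 3.6.2]{Lattice}. The plan is to parametrise the primitive embeddings $T\hookrightarrow\Gamma$ by their gluing data, read off the invariants of $S=T^\perp$ case by case, and then upgrade the computation to the asserted bijection using the uniqueness of indefinite $2$-elementary lattices.

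First I would recall the setup. Writing $q_T\colon A_T\to\QQ/2\ZZ$ and $q_\Gamma\colon A_\Gamma\to\QQ/2\ZZ$ for the discriminant quadratic forms, Nikulin's proposition identifies primitive embeddings of $T$ into $\Gamma$ (up to the action of $\Ort(q_T)$, noting that $\Ort(q_\Gamma)$ is trivial here) with gluing data $(H_T,H_\Gamma,\gamma)$, where $H_T\subset A_T$ and $H_\Gamma\subset A_\Gamma$ are subgroups and $\gamma\colon H_T\to H_\Gamma$ is an isomorphism with $q_\Gamma\circ\gamma = q_T|_{H_T}$; the orthogonal complement $S$ then has signature $(3-1,20-t)=(2,20-t)$ and discriminant form $q_S = (-q_T\oplus q_\Gamma)|_{\Gamma_\gamma^\perp}/\Gamma_\gamma$, where $\Gamma_\gamma=\{(h,\gamma(h)):h\in H_T\}\subset A_T\oplus A_\Gamma$ is the isotropic graph of $\gamma$. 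Since $A_\Gamma\cong\ZZ/2\ZZ$, generated by a class $g$ with $q_\Gamma(g)=-\tfrac12$, the only possibilities are $H_\Gamma=0$ or $H_\Gamma=A_\Gamma$.

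Next I would analyse these two cases. For $H_\Gamma=0$ the gluing is forced to be trivial, $\Gamma_\gamma=0$, so $A_S = A_T\oplus A_\Gamma$ and $q_S=-q_T\oplus q_\Gamma$; this is $2$-elementary with $a(S)=a(T)+1$, and $\delta(S)=1$ because $q_\Gamma(g)=-\tfrac12\notin\ZZ$. For $H_\Gamma=A_\Gamma$ one must have $H_T=\langle x\rangle\cong\ZZ/2\ZZ$ with $\gamma(x)=g$, which is possible \emph{precisely} when there exists $x\in A_T$ with $q_T(x)=q_\Gamma(g)=-\tfrac12\equiv\tfrac32\pmod{2\ZZ}$, i.e. $x^2=\tfrac32\bmod 2\ZZ$; a count of orders (using that $\Gamma_\gamma\cong\ZZ/2\ZZ$ is isotropic inside $A_T\oplus A_\Gamma$, so $|A_S|=2^{a(T)+1-2}$) then gives $a(S)=a(T)-1$. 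This explains the dichotomy: in case (i), where no such $x$ exists, only the trivial gluing is available, so the embedding assumed to exist must be of that type and is therefore unique, with $S$ carrying the stated invariants $a(S)=a(T)+1,\ \delta(S)=1$; in case (ii) both gluings occur, producing complements with $a(S)=a(T)+1,\ \delta(S)=1$ and with $a(S)=a(T)-1$ respectively.

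Finally, to pass from these computations to the claimed $1$-$1$ correspondence I would invoke the classification of indefinite $2$-elementary lattices: since $S$ is $2$-elementary of indefinite signature $(2,20-t)$ with $r(S)=22-t$ fixed, its isometry class is determined by $(a(S),\delta(S))$, so specifying an isometry class of $S$ is the same datum as specifying the invariants produced by the gluing. Matching the two, and applying Nikulin's existence criterion to confirm that each candidate form $q_S$ is realised by a lattice of the required signature, yields the bijection between embeddings and admissible $S$. The main obstacle I anticipate lies in the nontrivial-gluing case of (ii): explicitly evaluating $q_S$ on the quotient $\Gamma_\gamma^\perp/\Gamma_\gamma$ to pin down which values of $\delta(S)$ actually arise, and verifying that distinct gluing maps $\gamma$ modulo $\Ort(q_T)$ correspond bijectively to the non-isometric complements $S$ rather than overcounting or collapsing. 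This is exactly where careful bookkeeping of the $\Ort(q_T)$-action, together with the existence and uniqueness inputs from Nikulin's theory, is indispensable.
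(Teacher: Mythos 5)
Your approach is correct and is essentially the argument of the cited source: the paper itself states this result without proof, quoting \cite[Proposition 8.2]{BCS}, whose proof likewise runs through Nikulin's classification of primitive embeddings via gluing data $(H_T,H_\Gamma,\gamma)$ \cite[Proposition 1.15.1]{Lattice}, the dichotomy $H_\Gamma=0$ versus $H_\Gamma=A_\Gamma$ forced by $A_\Gamma\cong\ZZ/2\ZZ$ with $q_\Gamma(g)=-\tfrac12\equiv\tfrac32$, and the uniqueness of indefinite $2$-elementary lattices with given $(r,a,\delta)$. Your computations of $a(S)$ and $\delta(S)$ in the two cases, and your identification of the existence of $x\in A_T$ with $x^2=\tfrac32\bmod 2\ZZ$ as the criterion for the nontrivial gluing, are all accurate.
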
 

Such embeddings have a geometric realization.

\begin{proposition}[\cite{BCS}, Proposition 8.5]
For all embeddings $f:T\hookrightarrow \Gamma$ of $T$ a Lorentzian 2-elementary sublattice, there exists an IHS manifold of $\K3^{[2]}$-type with a non-symplectic involution $i:X\rightarrow X$ such that the invariant lattice $H^{2}(X,\Z)^{i}\subset H^2(X,\Z)$ is isomorphic to the embedding $f(T)\subset \Gamma$.
\end{proposition}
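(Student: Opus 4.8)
The plan is to realize the desired involution first on the lattice $\Gamma$ and then to geometrize it through the surjectivity of the period map together with the Torelli theorem. Write $S:=f(T)^{\bot}$ for the orthogonal complement in $\Gamma$ and define $\rho$ on $f(T)\oplus S$ by $\rho|_{f(T)}=\id$ and $\rho|_{S}=-\id$. The first task is to check that $\rho$ actually preserves the full lattice $\Gamma$, and not merely the finite-index sublattice $f(T)\oplus S$. For this I would invoke Nikulin's gluing description (\cite{Lattice}): $\Gamma$ corresponds to an isotropic subgroup of $A_{f(T)}\oplus A_{S}$, the graph of an anti-isometry $\gamma$ between subgroups of $A_{f(T)}$ and $A_{S}$. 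Since $T$ is $2$-elementary, its discriminant group is $2$-torsion, so $\gamma(2a)=0$ and hence $-\id$ on $A_{S}$ agrees with $\id$ on the glue; thus $\rho\in\Ort(\Gamma)$. Next I would verify $\rho\in\Ort^{+}(\Gamma)$: on the positive-definite $3$-space of $\Gamma\otimes\RR$, $\rho$ has eigenvalue $+1$ on the single positive direction of $f(T)$ and $-1$ on the two positive directions of $S$, so its determinant there is $+1$ and it preserves the orientation of the positive cone. As $A_{\Gamma}\cong\ZZ/2\ZZ$ admits only the trivial automorphism, $\rho$ acts as $\pm\id$ on the discriminant, and Markman's description of the monodromy group (\cite{Markman}) then shows $\rho\in\Mon^{2}$, i.e.\ $\rho$ is a parallel-transport operator.

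With $\rho$ in hand I would produce the manifold. Applying the surjectivity of the period map (Theorem \ref{period}) to a connected component of $\Mm_{\Gamma}$ parametrizing $\K3^{[2]}$-type manifolds, I choose a marked pair $(X,\varphi)$ whose period $x=\Ppp(X,\varphi)$ is a very general point of $\PP\{x\in S\otimes\C \mid x^{2}=0,\ x\cdot\overline{x}>0\}$, a set that is non-empty because $\sign(S)=(2,20-t)$. For such a general $x$ the only integral classes orthogonal to both $\Ree x$ and $\Ima x$ are those of $f(T)$, so $\varphi$ identifies $\Pic(X)$ with $f(T)$. Because $T$ is Lorentzian it contains a class of positive square; by Huybrechts' projectivity criterion (\cite{Huybrechts2}) the manifold $X$ is then projective, and I may pick an ample, hence K\"ahler, class $\omega\in\varphi^{-1}(f(T))\otimes\RR$.

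Finally I would geometrize the lattice involution. Set $g:=\varphi^{-1}\circ\rho\circ\varphi\in\Ort(H^{2}(X,\ZZ))$. It is a parallel-transport operator because $\rho$ is one, and it is a Hodge isometry since $\rho(x)=-x=x$ in $\PP(S\otimes\C)$ fixes the period. As $g$ acts as the identity on $\varphi^{-1}(f(T))\supseteq\RR\omega$, it sends the K\"ahler class $\omega$ to itself, so Theorem \ref{Torelli}(\ref{it torelli}) yields an isomorphism $\tilde g:X\to X$ with $\tilde g_{*}=g$. Since the action of $\Aut(X)$ on $H^{2}(X,\ZZ)$ is faithful for $\K3^{[2]}$-type manifolds and $g^{2}=\id$, the map $\tilde g$ is an involution; and because $g=-\id$ on $S$, which contains the period, one has $\tilde g^{*}\sigma_{X}=-\sigma_{X}$, so $\tilde g$ is non-symplectic. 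By construction $H^{2}(X,\ZZ)^{\tilde g^{*}}=\varphi^{-1}(f(T))$, which is isometric to the embedding $f(T)\subset\Gamma$, as required.

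I expect the main obstacle to lie in the two integrality points rather than in the formal Torelli argument: first, that $\rho$ descends to an isometry of $\Gamma$ lying in the monodromy group (the $2$-elementary hypothesis on $T$ and the smallness of $A_{\Gamma}$ are precisely what make this work), and second, that the lattice-theoretic involution is genuinely geometric, which requires producing a fixed K\"ahler class. The latter is where the very general choice of period is essential: forcing $\Pic(X)=f(T)$ makes $g$ act trivially on the entire N\'eron--Severi group, so that an ample class is automatically $g$-invariant and no wall-crossing correction --- which would alter the invariant lattice --- is needed.
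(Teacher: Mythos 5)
Your argument is correct. The paper itself gives no proof of this statement --- it is quoted directly from \cite{BCS}, Proposition 8.5 --- and your construction (extend $\id_{f(T)}\oplus(-\id_{S})$ to an isometry of $\Gamma$ using that $A_{f(T)}$ is $2$-torsion, observe it lies in $\Ort^{+}(\Gamma)=\Mon^{2}$ for $\K3^{[2]}$-type, choose a very general period in $S\otimes\C$ so that $\Pic(X)=\varphi^{-1}(f(T))$ and an invariant ample class exists, then invoke surjectivity of the period map, the Hodge-theoretic Torelli theorem, and faithfulness of $\Aut(X)\to\Ort(H^{2}(X,\Z))$) is essentially the proof given there; the only hypothesis worth making explicit is that the embedding $f$ must be primitive, since the invariant lattice of an involution is always saturated, and your identification of $\ker(\rho-\id)$ with $f(T)$ silently uses this.
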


It follows that holomorphic antisymplectic involutions on a $\K3^{[2]}$-type manifold are classified by the values $(r(T),a(T),\delta(T))$ and $(r(S), a(S), \delta(S))$ occurring the following figures \cite[Figure 1 and 2]{BCS}.

\begin{figure}[H]
\setlength{\unitlength}{2cm}
\centering
$$\begin{picture}(2,3)(1,0)
\put(-0.2,-0.2){\vector(1,0){4.6}}
\put(-0.2,-0.2){\vector(0,1){2.6}}

\put(0,-0.25){\line(0,1){0.05}}
\put(0.2,-0.25){\line(0,1){0.05}}
\put(0.4,-0.25){\line(0,1){0.05}}
\put(0.6,-0.25){\line(0,1){0.05}}
\put(0.8,-0.25){\line(0,1){0.05}}
\put(1,-0.25){\line(0,1){0.05}}
\put(1.2,-0.25){\line(0,1){0.05}}
\put(1.4,-0.25){\line(0,1){0.05}}
\put(1.6,-0.25){\line(0,1){0.05}}
\put(1.8,-0.25){\line(0,1){0.05}}
\put(2,-0.25){\line(0,1){0.05}}
\put(2.2,-0.25){\line(0,1){0.05}}
\put(2.4,-0.25){\line(0,1){0.05}}
\put(2.6,-0.25){\line(0,1){0.05}}
\put(2.8,-0.25){\line(0,1){0.05}}
\put(3,-0.25){\line(0,1){0.05}}
\put(3.2,-0.25){\line(0,1){0.05}}
\put(3.4,-0.25){\line(0,1){0.05}}
\put(3.6,-0.25){\line(0,1){0.05}}
\put(3.8,-0.25){\line(0,1){0.05}}
\put(4,-0.25){\line(0,1){0.05}}

\put(-0.05,-0.4){$0$}
\put(0.95,-0.4){$5$}
\put(1.90,-0.4){$10$}
\put(2.90,-0.4){$15$}
\put(3.90,-0.4){$20$}
\put(4.4,-0.35){$r$}

\put(-0.25,0){\line(1,0){0.05}}
\put(-0.25,0.2){\line(1,0){0.05}}
\put(-0.25,0.4){\line(1,0){0.05}}
\put(-0.25,0.6){\line(1,0){0.05}}
\put(-0.25,0.8){\line(1,0){0.05}}
\put(-0.25,1){\line(1,0){0.05}}
\put(-0.25,1.2){\line(1,0){0.05}}
\put(-0.25,1.4){\line(1,0){0.05}}
\put(-0.25,1.6){\line(1,0){0.05}}
\put(-0.25,1.8){\line(1,0){0.05}}
\put(-0.25,2){\line(1,0){0.05}}
\put(-0.25,2.2){\line(1,0){0.05}}

\put(-0.4,-0.05){$0$}
\put(-0.4,0.95){$5$}
\put(-0.5,1.95){$10$}
\put(-0.4,2.4){$a$}


\put(0.4,0){\circle{0.07}}
\put(2,0){\circle{.07}}
\put(3.6,0){\circle{.07}}
\put(2,2){\circle{.07}}
\put(2,1.6){\circle{.07}}
\put(2,1.2){\circle{.07}}
\put(2,0.8){\circle{.07}}
\put(2,0.4){\circle{.07}}
\put(0.4,0.4){\circle{.07}}
\put(1.2,0.4){\circle{.07}}
\put(1.2,0.8){\circle{.07}}
\put(2.8,0.4){\circle{.07}}
\put(2.8,0.8){\circle{.07}}
\put(2.8,1.2){\circle{.07}}
\put(3.6,0.4){\circle{.07}}
\put(3.6,0.8){\circle{.07}}

\put(0.2,0.2){\circle*{.03}}
\put(0.6,0.2){\circle*{.03}}
\put(1.8,0.2){\circle*{.03}}
\put(2.2,0.2){\circle*{.03}}
\put(3.4,0.2){\circle*{.03}}
\put(3.8,0.2){\circle*{.03}}

\put(0.4,0.4){\circle*{.03}}
\put(0.8,0.4){\circle*{.03}}
\put(1.6,0.4){\circle*{.03}}
\put(2,0.4){\circle*{.03}}
\put(2.4,0.4){\circle*{.03}}
\put(3.2,0.4){\circle*{.03}}
\put(3.6,0.4){\circle*{.03}}
\put(4,0.4){\circle*{.03}}

\put(0.6,0.6){\circle*{.03}}
\put(1,0.6){\circle*{.03}}
\put(1.4,0.6){\circle*{.03}}
\put(1.8,0.6){\circle*{.03}}
\put(2.2,0.6){\circle*{.03}}
\put(2.6,0.6){\circle*{.03}}
\put(3,0.6){\circle*{.03}}
\put(3.4,0.6){\circle*{.03}}
\put(3.8,0.6){\circle*{.03}}

\put(0.8,0.8){\circle*{.03}}
\put(1.2,0.8){\circle*{.03}}
\put(1.6,0.8){\circle*{.03}}
\put(2,0.8){\circle*{.03}}
\put(2.4,0.8){\circle*{.03}}
\put(2.8,0.8){\circle*{.03}}
\put(3.2,0.8){\circle*{.03}}
\put(3.6,0.8){\circle*{.03}}

\put(1,1){\circle*{.03}}
\put(1.4,1){\circle*{.03}}
\put(1.8,1){\circle*{.03}}
\put(2.2,1){\circle*{.03}}
\put(2.6,1){\circle*{.03}}
\put(3,1){\circle*{.03}}
\put(3.4,1){\circle*{.03}}

\put(1.2,1.2){\circle*{.03}}
\put(1.6,1.2){\circle*{.03}}
\put(2,1.2){\circle*{.03}}
\put(2.4,1.2){\circle*{.03}}
\put(2.8,1.2){\circle*{.03}}
\put(3.2,1.2){\circle*{.03}}

\put(1.4,1.4){\circle*{.03}}
\put(1.8,1.4){\circle*{.03}}
\put(2.2,1.4){\circle*{.03}}
\put(2.6,1.4){\circle*{.03}}
\put(3,1.4){\circle*{.03}}

\put(1.6,1.6){\circle*{.03}}
\put(2,1.6){\circle*{.03}}
\put(2.4,1.6){\circle*{.03}}
\put(2.8,1.6){\circle*{.03}}

\put(1.8,1.8){\circle*{.03}}
\put(2.2,1.8){\circle*{.03}}
\put(2.6,1.8){\circle*{.03}}

\put(2,2){\circle*{.03}}
\put(2.4,2){\circle*{.03}}

\put(2.2,2.2){\circle*{.03}}

\put(3.4,2.6){$\bullet\ \delta(T)=\delta(S)=1$}
\put(3.4,2.4){$\circ \ \delta(T)=0,\ \delta(S)=1$}
\end{picture}$$\vspace{0.1cm}
\caption{$T\hookrightarrow \Gamma$ with $a(S)=a(T)+1$}\label{K32BAA1}
\end{figure}

\begin{figure}[H]
\setlength{\unitlength}{2cm}
\centering
$$\begin{picture}(2,3)(1,0)
\put(-0.4,-0.4){\vector(1,0){4.8}}
\put(-0.4,-0.4){\vector(0,1){2.8}}

\put(-0.2,-0.45){\line(0,1){0.05}}
\put(0,-0.45){\line(0,1){0.05}}
\put(0.2,-0.45){\line(0,1){0.05}}
\put(0.4,-0.45){\line(0,1){0.05}}
\put(0.6,-0.45){\line(0,1){0.05}}
\put(0.8,-0.45){\line(0,1){0.05}}
\put(1,-0.45){\line(0,1){0.05}}
\put(1.2,-0.45){\line(0,1){0.05}}
\put(1.4,-0.45){\line(0,1){0.05}}
\put(1.6,-0.45){\line(0,1){0.05}}
\put(1.8,-0.45){\line(0,1){0.05}}
\put(2,-0.45){\line(0,1){0.05}}
\put(2.2,-0.45){\line(0,1){0.05}}
\put(2.4,-0.45){\line(0,1){0.05}}
\put(2.6,-0.45){\line(0,1){0.05}}
\put(2.8,-0.45){\line(0,1){0.05}}
\put(3,-0.45){\line(0,1){0.05}}
\put(3.2,-0.45){\line(0,1){0.05}}
\put(3.4,-0.45){\line(0,1){0.05}}
\put(3.6,-0.45){\line(0,1){0.05}}
\put(3.8,-0.45){\line(0,1){0.05}}
\put(4,-0.45){\line(0,1){0.05}}

\put(-0.25,-0.6){$0$}
\put(0.75,-0.6){$5$}
\put(1.7,-0.6){$10$}
\put(2.7,-0.6){$15$}
\put(3.7,-0.6){$20$}

\put(4.4,-0.55){$r$}

\put(-0.45,-0.2){\line(1,0){0.05}}
\put(-0.45,0){\line(1,0){0.05}}
\put(-0.45,0.2){\line(1,0){0.05}}
\put(-0.45,0.4){\line(1,0){0.05}}
\put(-0.45,0.6){\line(1,0){0.05}}
\put(-0.45,0.8){\line(1,0){0.05}}
\put(-0.45,1){\line(1,0){0.05}}
\put(-0.45,1.2){\line(1,0){0.05}}
\put(-0.45,1.4){\line(1,0){0.05}}
\put(-0.45,1.6){\line(1,0){0.05}}
\put(-0.45,1.8){\line(1,0){0.05}}
\put(-0.45,2){\line(1,0){0.05}}
\put(-0.45,2.2){\line(1,0){0.05}}

\put(-0.6,-0.25){$0$}
\put(-0.6,0.75){$5$}
\put(-0.7,1.75){$10$}

\put(-0.6,2.4){$a$}


\put(0.4,0){\circle{0.07}}
\put(2,0){\circle{.07}}
\put(3.6,0){\circle{.07}} 
\put(2,2){\circle{.07}}
\put(2,1.6){\circle{.07}}
\put(2,1.2){\circle{.07}}
\put(2,0.8){\circle{.07}}
\put(2,0.4){\circle{.07}}
\put(0.4,0.4){\circle{.07}}
\put(1.2,0.4){\circle{.07}}
\put(1.2,0.8){\circle{.07}}
\put(2.8,0.4){\circle{.07}}
\put(2.8,0.8){\circle{.07}}
\put(2.8,1.2){\circle{.07}}
\put(3.6,0.4){\circle{.07}}
\put(3.6,0.8){\circle{.07}}

\put(0.2,0.2){\circle*{.03}}
\put(0.6,0.2){\circle*{.03}}
\put(1.8,0.2){\circle*{.03}}
\put(2.2,0.2){\circle*{.03}}
\put(3.4,0.2){\circle*{.03}}
\put(3.8,0.2){\circle*{.03}}

\put(0.4,0.4){\circle*{.03}}
\put(0.8,0.4){\circle*{.03}}
\put(1.6,0.4){\circle*{.03}}
\put(2,0.4){\circle*{.03}}
\put(2.4,0.4){\circle*{.03}}
\put(3.2,0.4){\circle*{.03}}
\put(3.6,0.4){\circle*{.03}}
\put(4,0.4){\circle*{.03}}

\put(0.6,0.6){\circle*{.03}}
\put(1,0.6){\circle*{.03}}
\put(1.4,0.6){\circle*{.03}}
\put(1.8,0.6){\circle*{.03}}
\put(2.2,0.6){\circle*{.03}}
\put(2.6,0.6){\circle*{.03}}
\put(3,0.6){\circle*{.03}}
\put(3.4,0.6){\circle*{.03}}
\put(3.8,0.6){\circle*{.03}}

\put(0.8,0.8){\circle*{.03}}
\put(1.2,0.8){\circle*{.03}}
\put(1.6,0.8){\circle*{.03}}
\put(2,0.8){\circle*{.03}}
\put(2.4,0.8){\circle*{.03}}
\put(2.8,0.8){\circle*{.03}}
\put(3.2,0.8){\circle*{.03}}
\put(3.6,0.8){\circle*{.03}}

\put(1,1){\circle*{.03}}
\put(1.4,1){\circle*{.03}}
\put(1.8,1){\circle*{.03}}
\put(2.2,1){\circle*{.03}}
\put(2.6,1){\circle*{.03}}
\put(3,1){\circle*{.03}}
\put(3.4,1){\circle*{.03}}

\put(1.2,1.2){\circle*{.03}}
\put(1.6,1.2){\circle*{.03}}
\put(2,1.2){\circle*{.03}}
\put(2.4,1.2){\circle*{.03}}
\put(2.8,1.2){\circle*{.03}}
\put(3.2,1.2){\circle*{.03}}

\put(1.4,1.4){\circle*{.03}}
\put(1.8,1.4){\circle*{.03}}
\put(2.2,1.4){\circle*{.03}}
\put(2.6,1.4){\circle*{.03}}
\put(3,1.4){\circle*{.03}}

\put(1.6,1.6){\circle*{.03}}
\put(2,1.6){\circle*{.03}}
\put(2.4,1.6){\circle*{.03}}
\put(2.8,1.6){\circle*{.03}}

\put(1.8,1.8){\circle*{.03}}
\put(2.2,1.8){\circle*{.03}}
\put(2.6,1.8){\circle*{.03}}

\put(2,2){\circle*{.03}}
\put(2.4,2){\circle*{.03}}

\put(2.2,2.2){\circle*{.03}}

\put(3.4,2.6){$\bullet\ \delta(T)=\delta(S)=1$}
\put(3.4,2.4){$\circ\ \delta(T)=1,\ \delta(S)=0$}
\end{picture}$$\vspace{0.5cm}
\caption{$T\hookrightarrow \Gamma$ with $a(S)=a(T)-1$}\label{K32BAA2}
\end{figure}

\begin{remark}
Let $X$ be a $\K3^{[2]}$-type IHS manifold endowed with a antiholomorphic (anti)symplectic involution. Using a hyperk\"ahler rotation we recover the case of a holomorphic antisymplectic involution. Then $H^{2}(X,\Z)^{i}$ is also a 2-elementary Lorentzian sublattice of $U^3\oplus E_{8}(-1)^{2}\oplus(-2)$ and the antiholomorphic (anti)symplectic involutions are also classified by the parameters of Figures \ref{K32BAA1} and \ref{K32BAA2}.
\end{remark}

If $i$ is a holomorphic involution on a K3 surface $X$, denote by $\hat{\imath}$ the natural involution on $\Hilb^2(X)$ given by the pull-back (see Corollary \ref{co natural involutions on Hilbert scheme} below),
\[
\morph{\Hilb^2(X)}{\Hilb^2(X)}{F}{i^*F.}{}{\hat{\imath}}
\]

Thanks to \cite[Section 4.2]{Boissiere} and the description of the fixed locus $X^i$ given in \eqref{eq fixed locus of a BBB-brane} and in Theorem \ref{tm nikulin description}, one can compute the fixed point locus of $\hat{\imath}$ obtaining the following well known descriptions. See also \cite{Mongardi} and \cite{BeauInvolution} for more general statements. 

\begin{corollary} 
Let $X$ be a smooth $\K3$ surface, and let $i:X\to X$ be a holomorphic symplectic involution on it. One has that 
\[
\Hilb^2(X)^{\hat{\imath}} \cong \,  \widetilde{X/i} \, \sqcup  \bigsqcup_{\ell = 1}^{\binom{8}{2} = 28}
\left\{ q_\ell \right\},
\]
where $\widetilde{X/i}$ is the blow-up of $X/i$ along the $8$ isolated fixed points of $X^i$ and it is a $\K3$ surface.

If $i$ is a holomorphic antisymplectic involution on $X$ associated to the lattice invariants $(r,a,\delta)$, then 
\begin{enumerate}
\item for $r = 10$, $a = 10$ and $\delta = 0$, one has
\[
\Hilb^2(X)^{\hat{\imath}} \cong \,  X/i, 
\]
\item for $r = 10$, $a = 8$ and $\delta = 0$, one has
\[
\Hilb^2(X)^{\hat{\imath}} \cong \,  (X/i) \, \sqcup \bigsqcup_{k = 1}^2 \Sym^2(E_k) \sqcup (E_{1} \times E_{2}),
\]
where $E_1$ and $E_2$ are elliptic curves, and

\item in the remaining cases,
\[
\Hilb^2(X)^{\hat{\imath}} \cong \,  (X/i) \, \sqcup \Sym^2(C) \sqcup \bigsqcup_{k = 1}^\ell \PP^2 \sqcup \bigsqcup_{k = 1}^\ell
(C \times \PP^1)  \sqcup \bigsqcup_{k = 1}^{\binom{\ell}{2}} (\PP^1 \times \PP^1). 
\]
where $C$ is a curve of genus $g$, where $g = (22 - r - a)/2$ and $\ell = (r - a)/2$.
\end{enumerate}
\end{corollary}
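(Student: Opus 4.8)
\emph{Proof proposal.} The plan is to stratify the fixed locus $\Hilb^2(X)^{\hat{\imath}}$ according to the Hilbert--Chow morphism $\pi : \Hilb^2(X) \to \Sym^2(X)$, which realises $\Hilb^2(X)$ as the blow-up of $\Sym^2(X) = (X\times X)/\mathfrak{S}_2$ along its diagonal, with exceptional divisor $E \cong \PP(T_X)$ the projectivised tangent bundle. Since $\hat{\imath}$ covers the involution induced by $i\times i$ on $\Sym^2(X)$ and acts on $E$ by $(a,\xi)\mapsto (i(a), di_a(\xi))$, a length-$2$ subscheme $Z$ is fixed precisely when its support is $i$-invariant and, in the non-reduced case, the associated tangent direction is preserved by $di$. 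Following \cite[Section 4.2]{Boissiere}, I would organise the fixed subschemes into three types: (1a) reduced pairs $\{a,b\}$ with $a,b\in X^i$ distinct; (1b) swapped pairs $\{a,i(a)\}$ with $a\notin X^i$; and (2) fat points supported at a single $a\in X^i$ with tangent direction $\xi$ satisfying $di_a(\xi)=\xi$.

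The key local input is the linearisation of $i$ at a fixed point. First I would record that for a holomorphic symplectic involution $di_a = -\id$ on $T_aX$ at each of the eight isolated fixed points, so that $di_a$ acts trivially on $\PP(T_aX)\cong\PP^1$ and \emph{every} tangent direction is preserved; whereas for a holomorphic antisymplectic involution, along a fixed curve one has $di_a=\diag(1,-1)$, so exactly two directions are preserved, the one tangent to $X^i$ and the normal one. This dichotomy controls how the type (2) loci over the exceptional divisor attach to the two-dimensional strata.

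Next I would assemble the components. The closure of the type (1b) locus is the image of the morphism $X \to \Hilb^2(X)$, $a\mapsto\{a,i(a)\}$, which factors through $X/i$; away from $X^i$ it is the reduced quotient, and as $a$ approaches a fixed point the pair degenerates to the fat point carrying the \emph{normal} direction. In the antisymplectic case $X/i$ is smooth, so this closure is $\cong X/i$ (an Enriques surface when $X^i=\varnothing$); in the symplectic case $X/i$ acquires eight $A_1$-singularities and the extra $\PP^1$ of preserved directions over each $p_k$ is exactly the exceptional curve of the minimal resolution, so the closure is the blow-up $\widetilde{X/i}$, which is a $\K3$ surface. The type (1a) locus, together with its degenerations to fat points carrying a tangent direction \emph{along} $X^i$, reconstitutes $\Sym^2$ of the fixed locus: writing $X^i = C \sqcup \bigsqcup_k R_k$ it splits as $\Sym^2(C)$, the pieces $\Sym^2(R_k)\cong\PP^2$, the $C\times R_k$ and the $R_j\times R_k\cong \PP^1\times\PP^1$. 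Finally, the degenerate cases follow by feeding the three possible shapes of $X^i$ from Theorem \ref{tm nikulin description} (empty; two elliptic curves with $\ell=0$; a genus-$g$ curve plus $\ell$ rational curves) into this bookkeeping, together with the symplectic case from \eqref{eq fixed locus of a BBB-brane}, recovering the listed formulas and the $\binom{8}{2}=28$ isolated points coming from distinct pairs $\{p_k,p_l\}$.

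The main obstacle I anticipate is the careful analysis over the exceptional divisor: I must show that the two preserved directions in the antisymplectic case genuinely separate the $\Sym^2(C)$ stratum (tangent direction) from the $X/i$ stratum (normal direction), so that the union is \emph{disjoint} and each component is smooth of the claimed isomorphism type. Concretely this requires identifying the limiting tangent direction of the swapped pairs and matching the $\PP^1$ of preserved directions in the symplectic case with the $A_1$-resolution, verifying that no stratum meets another along $E$. Everything else is a routine count once the local model $di_a$ and the fibres of $\pi$ over distinct versus coincident points are pinned down.
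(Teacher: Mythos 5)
Your proposal is correct and follows essentially the same route as the paper, which simply invokes Boissi\`ere's analysis of natural involutions on $\Hilb^2$ together with the Nikulin description of $X^i$; your stratification by the Hilbert--Chow morphism and the local linearisation $di_a=-\id$ (symplectic, isolated fixed points) versus $di_a=\diag(1,-1)$ (antisymplectic, fixed curves) is exactly the computation being cited. The one point you flag as delicate --- that the tangent and normal directions separate the $\Sym^2(C)$ and $X/i$ strata over the exceptional divisor, and that the full $\PP^1$ of directions over each $p_k$ yields the $A_1$-resolution $\widetilde{X/i}$ --- is indeed the content of that reference, and your treatment of it is sound.
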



\section{Brane involutions on moduli spaces of sheaves}\label{sc moduli space}

\subsection{Algebraic antiholomorphic involutions}

Consider a complex manifold $X$ endowed with an antiholomorphic involution $i$. For a locally free sheaf $E$ on $X$, or equivalently a holomorphic vector bundle, the notion of real structure on $E$ is well known, and goes back to Atiyah \cite{atiyah}. We recall that {\it a real vector bundle} over $(X, i)$ is a holomorphic vector bundle $E \stackrel{\pi}{\to}X$ together with an involution $\tilde{\imath}:E \to E$, $\CC$-antilinear on the fibres and such that the diagram
\[
\xymatrix{
E \ar[r]^{\tilde{\imath}} \ar[d]^\pi & E \ar[d]^\pi
\\
X\ar[r]^i & X,
}
\]
commutes. With this construction, $i$ induces an involution on the moduli space of holomorphic vector bundles \cite{biswas&huisman&hurtubise, schaffhauser}, defined by $E \mapsto i^*\ol{E}$. Here $\ol{E}$ is the holomorphic vector bundle over $X$ whose underlying $C^\infty$-bundle is the same as $E$, but endowed with the conjugate complex structure, so that $i^*\ol{E}$ is also a holomorphic vector bundle over $X$. In \cite{Biswas}, the construction $F \mapsto i^* \ol{F}$ is generalized to analytic sheaves.

In order to further generalize this construction to algebraic sheaves, we need the following assumption. Consider the antiholomorphic involution of $\PP^n$
\[
\morph{\PP^n}{\PP^n}{[x_0 : \dots : x_n]}{[\ol{x}_0 : \dots : \ol{x}_n].}{}{\iota}
\]
Given an antiholomorphic involution on a quasiprojective variety $i : X \to X$, we say that $i$ is {\it algebraic} if there exists an embedding into a projective space, $X \hookrightarrow \PP^n$, such that $i$ coincides with the restriction of $\iota$.    

Recall that $i^*\ol{L}$ is well defined for any algebraic line bundle $L$ on a quasiprojective variety. Note that we can say, equivalently, that $i : X \to X$ is algebraic if there exists an ample line bundle $L$ such that $L \cong i^*\ol{L}$. In that case, the involution $\iota : \PP(H^0(X, L^{\otimes m})) \to \PP(H^0(X, L^{\otimes m}))$ is induced by the $\CC$-antilinear map $s \mapsto \overline{s \circ i}$.

We are interested in algebraic antiholomorphic involutions since they preserve the Zariski topology.

\begin{proposition}
Let $i : X \to X$ be an algebraic antiholomorphic involution on a quasiprojective variety. For any Zariski open subset $U \subset X$, $i(U)$ is a Zariski open subset of $X$. 
\end{proposition}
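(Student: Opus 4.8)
The plan is to reduce everything to the behaviour of the standard conjugation $\iota$ on projective space, where Zariski-continuity can be verified by a direct computation on the defining polynomials, and then to transport this to $X$ through the embedding provided by the hypothesis that $i$ is algebraic.

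\emph{First step: $\iota$ is a Zariski homeomorphism of $\PP^n$.} Since $\iota$ is an involution, it is a bijection equal to its own inverse, so it suffices to show that it sends Zariski closed sets to Zariski closed sets. Let $Z = V(f_1, \dots, f_k)$ be the common zero locus of homogeneous polynomials $f_1, \dots, f_k \in \CC[x_0, \dots, x_n]$, and for $f = \sum_\alpha c_\alpha x^\alpha$ write $\bar{f} := \sum_\alpha \bar{c}_\alpha x^\alpha$ for the polynomial obtained by conjugating the coefficients. The elementary identity $f(\bar{p}) = \overline{\bar{f}(p)}$, valid for any homogeneous representative $p$ of a point and manifestly compatible with rescaling $p \mapsto \lambda p$, shows that $f_j(\iota(p)) = 0$ if and only if $\bar{f}_j(p) = 0$. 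Hence $\iota(Z) = V(\bar{f}_1, \dots, \bar{f}_k)$ is again Zariski closed, which proves that $\iota$ is a Zariski homeomorphism.

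\emph{Second step: transfer to $X$.} By the definition of an algebraic antiholomorphic involution, fix an embedding $X \hookrightarrow \PP^n$ for which $i = \iota|_X$. Because $i$ is a self-map of $X$, every $x \in X$ satisfies $\iota(x) = i(x) \in X$, and since the involution $i$ is surjective we get $\iota(X) = X$; that is, $X$ is $\iota$-invariant. The Zariski topology on $X$ is the subspace topology from $\PP^n$, so a Zariski open $U \subset X$ has the form $U = X \cap W$ with $W$ open in $\PP^n$. Using that $\iota$ is a bijection, $i(U) = \iota(X \cap W) = \iota(X) \cap \iota(W) = X \cap \iota(W)$, and since $\iota(W)$ is open in $\PP^n$ by the first step, $X \cap \iota(W)$ is open in $X$, as desired.

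The only genuinely substantive point is the first step, namely the identity $\iota\bigl(V(f_1, \dots, f_k)\bigr) = V(\bar{f}_1, \dots, \bar{f}_k)$; the remainder is formal manipulation of subspace topologies together with the observation that $\iota$, being an involution, is automatically a bijection. I do not expect a real obstacle here: the only subtlety is to phrase the coefficient-conjugation computation so that it is visibly independent of the chosen homogeneous representative, and this is exactly what the homogeneity of the $f_j$ guarantees.
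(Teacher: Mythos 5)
Your proof is correct and follows essentially the same route as the paper: the substantive point in both arguments is that conjugating the coordinates of a vanishing locus amounts to conjugating the coefficients of the defining homogeneous polynomial(s), so that $\iota$ preserves the Zariski topology of $\PP^n$, and the statement for $X$ follows by restricting along the embedding. The only cosmetic difference is that the paper runs the computation on basic open sets $U_p = X \cap (\PP^n \smallsetminus \{p=0\})$ rather than on arbitrary closed sets $V(f_1,\dots,f_k)$.
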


\begin{proof}
It is enough to prove the statement for the open subsets of the form $U_p = X \cap (\PP^n - \{ p = 0 \})$, where $p$ is the polynomial $p(x_0, \dots, x_n) = \sum a_{(j_0, \dots, j_n)} x_0^{j_0} \dots x_n^{j_n}$. Since the vanishing locus of a polynomial $p$ is the same as the vanishing locus of its complex conjugate $\ol{p} = \sum \ol{a}_{(j_0, \dots, j_n)} \ol{x}_0^{j_0} \dots \ol{x}_n^{j_n}$, we see that
\begin{align*}
i(U_p) = & \iota(X) \cap \iota  \left ( \PP^n -  \{ p = 0 \} \right ) 
\\
= & X \cap \left ( \PP^n - \iota \left ( \{ p = 0 \} \right ) \right ) 
\\
= & X \cap \left ( \PP^n - \{ p \circ \iota = 0 \} \right ) 
\\
= & X \cap \left ( \PP^n - \{ \ol{p \circ \iota} = 0 \} \right ),
\end{align*}
and therefore $i(U_p) = U_q$, where $q = \ol{p \circ \iota}$ is the polynomial $q(x_0, \dots, x_n) = \sum \overline{a}_{(j_0, \dots, j_n)} x_0^{j_0} \dots x_n^{j_n}$.
\end{proof}

One has naturally $i^*\ol{\Oo_X} \cong \Oo_X$, and observe that, for every $s \in \Oo_X(U)$, we have that $\ol{s \circ i} \in \Oo_X(i(U))$. Then for any sheaf of $\Oo_X$-modules $F$, we define the sheaf of $\Oo_X$-modules $i^*\ol{F}$ as the sheaf given by
\begin{equation} \label{eq definition of j*olFf 1}
i^*\overline{F}(U) := F(i(U)),
\end{equation}
with the $\Oo_X$-action defined as follows,
\begin{equation} \label{eq definition of j*olFf 2}
s \cdot f := (\ol{s \circ i}) f \in F(i(U)) = i^*\overline{F}(U),
\end{equation}
for any $s \in \Oo_X(U)$ and any $f \in \Oo_X(U)$. For every morphism of $\Oo_X$-modules $\varphi : F_1 \to F_2$, we define $i^*\ol{\varphi} : i^*\ol{F}_1 \to i^*\ol{F}_2$ to be the morphism of $\Oo_X$-modules given by
\[
i^*\ol{\varphi}|_U = \varphi|_{i(U)}. 
\]

\begin{remark} \label{rm autoequivalence of categories}
Since $i^*\ol{i^*\ol{F}}$ and $i^*\ol{i^*\ol{\varphi}}$ are naturally $F$ and $\varphi$, this construction gives an auto-equivalence of the category of sheaves of $\Oo_X$-modules.

Furthermore, due to the natural isomorphism $i^*\ol{\Oo_X} \cong \Oo_X$, we have that $i^* \ol{F}$ is (quasi)coherent if and only if $F$ is (quasi)coherent. Then $i^*\ol{(\bullet)}$ gives an auto-equivalence of $\Coh(X)$ and $\QCoh(X)$ whenever $i : X \to X$ is algebraic.  
\end{remark}

\begin{example}
If $X$ is the Fermat quartic in $\PP^3$, observe that the conjugation $\iota: \PP^3 \to \PP^3$ restricts to an antiholomorphic involution $i$ on $X$, which is algebraic by construction. 

Let $D$ be the divisor given by the intersection $X \cap \{ p = 0 \}$ and let $\Ii_D \subset \Oo_X$ be its ideal sheaf. Note that the affine subsets $U_i = \{ x_i \neq 0 \}$ are preserved by $\iota$ and observe that
\[
\Ii_D(U_i) \cong \quotient{\CC[x_0,x_1, x_2, x_3]}{\langle x_0^4 + x_1^4 + x_2^4 + x_3^4, x_i - 1, p\rangle.} 
\]
One can easily check that $i^*\ol{D} := i(D)$ is the intersection $X \cap \{ \ol{p \circ \iota} = 0 \}$, where $\ol{p \circ \iota}$ is a holomorphic polynomial, and  
\[
i^*\ol{\Ii_D}(U_i) \cong \quotient{\CC[x_0,x_1, x_2, x_3]}{\langle x_0^4 + x_1^4 + x_2^4 + x_3^4, x_i - 1,  \ol{p \circ \iota}\rangle.}
\]
\end{example}


\subsection{Natural involutions}

Now consider a brane involution $i : X \to X$ on a compact holomorphic symplectic surface $X$. If $i$ is holomorphic (i.e. of type $\BBB$ or $\BAA$), we set $\hat{\imath}$ to be the morphism of moduli spaces given by the pull-back,
\begin{equation} \label{eq definition of sigma_j holomorphic}
\morph{\M_X(H,v)}{\M_X(i^* H, i^* v)}{F}{i^*F.}{}{\hat{\imath}}
\end{equation}
When $H$ and $v$ are invariant under $i$, $\hat{\imath}$ defines an involution on the moduli space that we call {\it natural involution} associated to $i$. 

Given an algebraic antiholomorhic involution we can also define a natural involution on $\M_X(H,v)$. One can see that the Hilbert polynomial is preserved under $F \to i^*\overline{F}$. Note that any destabilizing subsheaf $G \subset i^*\overline{F}$ with respect to a linearization $i^*\overline{H}$ would give a destabilizing subsheaf $i^*\overline{G} \subset F$ with respect to the linearization $H$. Then, for an algebraic antiholomorphic brane involution $i$ (i.e. $\ABA$ or $\AAB$-type), we set
\begin{equation} \label{eq definition of sigma_j antiholomorphic}
\morph{\M_X(H,v)}{\M_X(i^*\overline{H},i^*\overline{v})}{F}{i^*\overline{F},}{}{\hat{\imath}}
\end{equation}
where, for every Mukai vector $v = (r, D, s) \in \ZZ \oplus \Pic(X) \oplus \ZZ$, we set $i^*\overline{v} = (r, i^*\overline{D}, s)$.

Note that $\hat{\imath}$ is an involution of $\M_X(H,v)$, whenever $H$ and $v$ are preserved by $i$, i.e. $i^*\overline{H} \cong H$ and $i^*\overline{v} \cong v$. In this case we say that $\hat{\imath}$ is a \emph{natural involution} induced by $i$. 


\begin{theorem} \label{tm branes in M}
Let $X$ be a projective $\K3$ or abelian surface, and let $i$ be an algebraic brane involution on $X$. Let $H$ and $D$ be a polarization and a divisor on $X$, respectively, both fixed by $i$. Let $\M_X(H, v)$ be the moduli space of $H$-stable sheaves on $X$ with Mukai vector $v = (r,D,s)\in \ZZ\oplus \Pic(X) \oplus \ZZ$. The involution
\[
\hat{\imath} : \M_X(H,v) \longrightarrow \M_X(H,v)
\]
is a brane involution of the same type as $i$.
\end{theorem}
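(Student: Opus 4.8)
The plan is to reduce the statement to two independent checks: whether $\hat\imath$ is holomorphic or antiholomorphic on $\M_X(H,v)$, and how $\hat\imath^{*}$ acts on the holomorphic symplectic form $\sigma_{\M}$. Since the Zariski tangent space at a stable sheaf $F$ is $T_{[F]}\M_X(H,v)\cong\Ext^1(F,F)$, and $\hat\imath$ is induced by the auto-equivalence $\Phi$ of $\Coh(X)$ given by $F\mapsto i^*F$ (holomorphic case) or $F\mapsto i^*\overline{F}$ (antiholomorphic case, see Remark \ref{rm autoequivalence of categories}), the differential $d\hat\imath_{[F]}$ is exactly $\Phi$ acting on first-order deformations, i.e. the map $\Phi:\Ext^1(F,F)\to\Ext^1(\Phi F,\Phi F)$. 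This transports both checks into computations in $\Ext$-groups, and, having established them, the conclusion that $\hat\imath$ is a brane involution of the named type follows from Proposition \ref{pr biswas&wilkin}.

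For the first check I would observe that pull-back by the holomorphic map $i$ is $\CC$-linear on $\Ext^1$, so $\hat\imath$ is holomorphic exactly when $i$ is; whereas the functor $F\mapsto i^*\overline{F}$ is $\CC$-\emph{anti}linear on morphism spaces, hence on $\Ext$-groups. Indeed, tracing through \eqref{eq definition of j*olFf 1}--\eqref{eq definition of j*olFf 2}, multiplication by a constant $\lambda\in\CC$ on $i^*\overline{F}$ is twisted to multiplication by $\overline\lambda$, so that $i^*\overline{(\lambda\varphi)}=\overline\lambda\,i^*\overline{\varphi}$. Thus $\hat\imath$ is antiholomorphic precisely when $i$ is, which matches the $B/A$ behaviour of the first slot of the type.

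For the second check I would use the explicit description of $\sigma_{\M}$ as the Yoneda product \eqref{eq yoneda product} followed by trace and contraction against $\sigma_X$, which, via Serre duality, reads $\sigma_{\M}(\alpha,\beta)=\int_X \sigma_X\wedge\tr(\alpha\cup\beta)$ for $\alpha,\beta\in\Ext^1(F,F)$. The heart of the argument is that $\Phi$ commutes with the Yoneda product and with the trace: compatibility with Yoneda is functoriality of $\Phi$, while compatibility with $\tr\colon\Ext^2(F,F)\to H^2(X,\Oo_X)$ follows from naturality of the trace together with the canonical isomorphism $i^*\overline{\Oo_X}\cong\Oo_X$. Writing $\eta=\tr(\alpha\cup\beta)$, this gives $\tr(\Phi\alpha\cup\Phi\beta)=\Phi\eta$, which equals $i^*\eta$ in the holomorphic case and $i^*\overline{\eta}$ in the antiholomorphic case. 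As $i$ is an orientation-preserving involution of the surface, a change of variables moves $i^*$ onto $\sigma_X$ inside the integral, yielding $(\hat\imath^*\sigma_{\M})(\alpha,\beta)=\int_X i^*\sigma_X\wedge\eta$ in the holomorphic case, and $(\hat\imath^*\sigma_{\M})(\alpha,\beta)=\int_X i^*\sigma_X\wedge\overline{\eta}=\overline{\int_X \overline{i^*\sigma_X}\wedge\eta}$ in the antiholomorphic case.

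Finally I would substitute the four defining relations $i^*\sigma_X=\pm\sigma_X$ (holomorphic) and $i^*\sigma_X=\pm\overline{\sigma_X}$ (antiholomorphic): these produce $\hat\imath^*\sigma_{\M}=\sigma_{\M}$, $-\sigma_{\M}$, $\overline{\sigma_{\M}}$ and $-\overline{\sigma_{\M}}$ in the $\BBB$, $\BAA$, $\ABA$ and $\AAB$ cases respectively, so $\hat\imath$ is (anti)holomorphic and (anti)symplectic of exactly the same type as $i$. The step I expect to be most delicate is the compatibility of the trace map with the conjugation functor $i^*\overline{(\cdot)}$, together with the careful bookkeeping of $\CC$-antilinearity as it propagates through the Yoneda product, the trace, the Serre duality pairing and the final integral; the fact that antiholomorphic involutions of a complex surface are orientation-preserving is precisely what renders the change of variables harmless.
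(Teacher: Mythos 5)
Your proposal is correct and follows essentially the same route as the paper's proof: identify $d\hat{\imath}$ with the induced map on $\Ext^1(F,F)$, deduce (anti)holomorphicity from the $\CC$-(anti)linearity of the functor $F\mapsto i^*F$ (resp.\ $F\mapsto i^*\overline{F}$), and then track $\sigma_{\M}$ through the Yoneda product, trace, Serre duality and contraction with $\sigma_X$, exactly as in the paper's chain of commutative diagrams \eqref{eq Yoneda commutes with sigma_j}--\eqref{eq third part of the diagram for Omega_M}. The only cosmetic difference is that you phrase the Serre-duality step as an integral with a change of variables (justified by orientation-preservation of antiholomorphic maps on a surface) where the paper simply asserts commutativity of the corresponding diagram.
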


\begin{proof}
The proof is straightforward in the holomorphic case, since $\hat{\imath}$ is simply the pull-back. However, we include this case in our proof for the sake of completeness. 

Take a $H$-stable sheaf $F$ with Mukai vector $v$ and, by abuse of notation, denote also by $F$ the geometric point of $\M_X(H,v)$ associated to it. Set $E = \hat{\imath}(F)$, and denote again by $E$ the associated sheaf. Note that $E \cong i^* F$ if $i$ is holomorphic, while $E\cong i^*\overline{F}$ if $i$ antiholomorphic.

Clearly, when $i$ is holomorphic, $\hat{\imath}$ is also holomorphic, so let us suppose that $i$ is antiholomorphic. The involution $\hat{\imath}$ induces 
\[
\morph{\Ext^k(F, F)}{\Ext^k(E, E)}{\psi}{i^*\overline{\psi}.}{}{\hat{\imath}^k}
\]
In the case $k=1$, this coincides with the differential of $\hat{\imath}$, 
\[
\hat{\imath}^1 = \dif \hat{\imath}.
\]
Recall that the complex structure $J_{\M}$ is given by multiplication by the imaginary number $\sqrt{-1}$; one gets
\[
\dif \hat{\imath}(\sqrt{-1} \psi) = i^*\overline{(\sqrt{-1} \psi)} = - \sqrt{-1} i^*\overline{\psi} = - \sqrt{-1} \dif \hat{\imath}(\psi).
\]
Therefore, $\hat{\imath}$ anticommutes with the complex structure $J_{\M}$,
\begin{equation} \label{eq behaviour of sigma_j on Gamma_n}
\dif \hat{\imath} \circ J_{\M} = - J_{\M} \circ \dif \hat{\imath}.
\end{equation}

Next, note that $\hat{\imath}^k$ commutes with the Yoneda product:
\begin{equation} \label{eq Yoneda commutes with sigma_j}
\xymatrix{
\Ext^1(F,F) \times \Ext^1(F,F) \ar[d]^{\hat{\imath}^1} \ar[r] & \Ext^2(F,F) \ar[d]^{\hat{\imath}^2}
\\
\Ext^1(E,E) \times \Ext^1(E,E) \ar[r] & \Ext^2(E,E).
}
\end{equation}
Recall that, for a $\K3$ or an abelian surface, $K_X \cong \Oo_X$, we can define $\hat{\imath}' : H^2(X,\Oo_X) \to  H^2(X,\Oo_X)$ and $\hat{\imath}'' : H^0(X,K_X) \to  H^0(X,K_X)$ to be given by $\hat{\imath}'(\varphi) = i^*\varphi$ and $\hat{\imath}''(s) = i^*s$ if $i$ is holomorphic, or $\hat{\imath}'(\varphi) = i^*\overline{\varphi}$ and $\hat{\imath}''_j(s) = i^*\overline{s}$, if $i$ is antiholomorphic. It follows that the diagram 
\begin{equation} \label{eq second part of the diagram}
\xymatrix{
\Ext^{2}(F,F) \ar[d]^{\hat{\imath}^2} \ar[r]^{\tr} \ar[d] & H^{2}(X,\Oo_{X}) \ar[r]^{Serre} \ar[d]^{\hat{\imath}'} & H^{0}(X,K_{X})^{\vee} \ar[d]^{\hat{\imath}''}
\\
\Ext^{2}(E,E) \ar[r]^{\tr} & H^{2}(X,\Oo_{X}) \ar[r]^{Serre} & H^{0}(X,K_{X})^{\vee},
}
\end{equation} 
commutes.

Since $i$ is a brane involution,
\[
i^* \sigma_X = b \cdot \sigma_X \quad \textnormal{or} \quad i^* \sigma_X = b \cdot \overline{\sigma}_X,
\]
depending on the type of $i$. As a consequence, the following diagram commutes,
\begin{equation} \label{eq third part of the diagram for Omega_M}
\xymatrix{
H^{0}(X,K_{X})^{\vee} \ar[r]^{\quad \quad \sigma_X} \ar[d]^{\hat{\imath}''} & \CC \ar[d]^{b \cdot g}
\\
H^{0}(X,K_{X})^{\vee} \ar[r]^{\quad \quad \sigma_X} & \CC,
}
\end{equation} 
where $g$ is the identity if $i$ is holomorphic, and the conjugation if it is antiholomorphic.

Observe now that the composition of the top rows of \eqref{eq Yoneda commutes with sigma_j}, \eqref{eq second part of the diagram} and \eqref{eq third part of the diagram for Omega_M} define $\sigma_{\M}$, while the bottom rows give $\hat{\imath}^* \sigma_{\M}$. Then, we find that
$$ \hat{\imath}^*\sigma_{\M} = b \cdot \sigma_{\M} \quad \textnormal{or} \quad \hat{\imath}^*\sigma_{\M} = b \cdot \overline{\sigma}_{\M}, $$
depending on whether $i$ is holomorphic or antiholomorphic. This proves that $\hat{\imath}$ has the type of $i$.
\end{proof}

Since Hilbert schemes are particular examples of moduli spaces of sheaves, we obtain the following corollary.

\begin{corollary} \label{co natural involutions on Hilbert scheme}
Let $X$ be a $\K3$ or abelian surface and let $i$ be an algebraic brane involution on $X$ inducing the natural involution $\hat{\imath}$ on $\Hilb^n(X)$. Then the fixed point locus $\Hilb^n(X)^{\hat{\imath}}$ is a brane within $\Hilb^n(X)$ of the same type as $i$.
\end{corollary}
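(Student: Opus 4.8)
The plan is to recognize $\Hilb^n(X)$ as a particular instance of the moduli spaces treated in Theorem \ref{tm branes in M} and then invoke that theorem directly. Recall that $\Hilb^n(X)$ parametrizes length-$n$ subschemes $Z \subset X$, or equivalently their ideal sheaves $\Ii_Z$, which are precisely the $H$-stable sheaves with Mukai vector $v = (1, 0, s)$: rank one, trivial first Chern class, and scalar part $s$ determined by $n$ (namely $s = 1-n$ in the $\K3$ case and $s = -n$ in the abelian case). Since every rank-one torsion-free sheaf is automatically stable for any polarization, this description is independent of the chosen $H$, so we may identify $\M_X(H,v) = \Hilb^n(X)$ within the formalism of the previous subsection for whichever $H$ is convenient.

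First I would verify that the hypotheses of Theorem \ref{tm branes in M} hold. The Mukai vector $v=(1,0,s)$ is automatically $i$-invariant: its rank and its scalar part $s$ are numerical and untouched, while its divisor part is $D=0$, so that $i^* v = v$ in the holomorphic case and $i^*\overline{v} = v$ in the antiholomorphic case. As for the polarization, because stability does not depend on $H$ in rank one, we have $\M_X(i^*H, i^*v) = \M_X(H,v) = \Hilb^n(X)$ regardless of the choice, and an $i$-invariant ample class exists in any event (for instance by averaging, taking $H + i^*H$ or $H + i^*\overline{H}$). Finally, one checks that the natural involution $\hat{\imath}$ on $\Hilb^n(X)$ defined by $F \mapsto i^* F$ (resp.\ $F \mapsto i^*\overline{F}$) coincides with the natural involution on $\M_X(H,v)$ given in \eqref{eq definition of sigma_j holomorphic} (resp.\ \eqref{eq definition of sigma_j antiholomorphic}), since on ideal sheaves it is simply the pull-back induced by the action $Z \mapsto i^{-1}(Z)$ on subschemes.

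With these verifications in place, Theorem \ref{tm branes in M} applies verbatim and shows that $\hat{\imath}$ is a brane involution of the same type as $i$. The conclusion that the fixed locus $\Hilb^n(X)^{\hat{\imath}}$ is a brane of the corresponding type then follows from Proposition \ref{pr biswas&wilkin} applied to the holomorphic symplectic manifold $\Hilb^n(X)$ (which is hyperk\"ahler by Yau's theorem, being compact K\"ahler with a holomorphic symplectic form), or equivalently straight from the definition of the type of a brane involution. The only genuine point to address---and it is a minor one---is the identification of the two descriptions of $\hat{\imath}$ together with the observation that $\Hilb^n(X)$ really does fit into the moduli-of-sheaves framework of Theorem \ref{tm branes in M}; once this is granted, the corollary is immediate.
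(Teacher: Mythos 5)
Your proposal is correct and follows exactly the paper's route: the paper derives this corollary from Theorem \ref{tm branes in M} with the single observation that Hilbert schemes are moduli spaces of sheaves (rank $1$, $c_1=0$, $c_2=n$), which is precisely your identification $v=(1,0,s)$. Your additional verifications (invariance of $v$ since $D=0$, independence of rank-one stability from the polarization, and the matching of the two descriptions of $\hat{\imath}$) are accurate and simply make explicit what the paper leaves implicit.
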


When $X$ is an abelian surface, it is straightforward to check that the projection in equation \eqref{eq definition of Kummer} commutes with natural involutions, so we have the commutative diagram 
\[ \xymatrix{
\Hilb^n(X) \ar[d]^a \ar[r]^{\hat{\imath}} & \Hilb^n(X) \ar[d]^a
\\
X \ar[r]^{i} & X. } \]
If the involution on $X$ preserves the origin, the natural involution preserves the associated generalized Kummer varie\-ty.  

\begin{corollary} \label{co natural involutions on Kummer}
Let $X$ be an abelian surface and let $i$ be an algebraic brane involution on $X$ preserving the origin $x_0 \in X$. This induces a natural involution $\hat{\imath}$ on the generalized Kummer variety $\Kum^n(X)$ whose fixed point locus $\Kum^n(X)^{\hat{\imath}}$ is a brane inside $\Kum^n(X)$ of the same type as $i$.
\end{corollary}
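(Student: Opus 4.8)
The plan is to deduce the statement from Corollary \ref{co natural involutions on Hilbert scheme} by restricting the natural involution on $\Hilb^n(X)$ to the fibre $\Kum^n(X) = a^{-1}(x_0)$ of the projection \eqref{eq definition of Kummer}, the only real work being to control the hyperk\"ahler structure of that fibre. First I would check that $\hat{\imath}$ preserves $\Kum^n(X)$. This is immediate from the commutative square relating $\hat{\imath}$ and the projection $a$ together with the hypothesis $i(x_0)=x_0$: for any $F$ with $a(F)=x_0$ one has $a(\hat{\imath}(F)) = i(a(F)) = i(x_0) = x_0$, so $\hat{\imath}(F)\in\Kum^n(X)$, and $\hat{\imath}$ restricts to an involution $\hat{\imath}|_{\Kum^n(X)}$. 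By Corollary \ref{co natural involutions on Hilbert scheme}, $\hat{\imath}$ is a brane involution on $\Hilb^n(X)$ of the same type as $i$; in particular, by Proposition \ref{pr branes as fixed points}, it is an isometry of the hyperk\"ahler metric $g$ of $\Hilb^n(X)$ which commutes or anticommutes with each $J_\ell$ according to a fixed pattern determined by the type of $i$.

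The key step is to identify the intrinsic hyperk\"ahler structure of $\Kum^n(X)$ with the one obtained by restriction from $\Hilb^n(X)$, that is, to show that $\Kum^n(X)$ is a hyperk\"ahler (equivalently, trianalytic) submanifold. For this I would use the finite \'etale cover $\mu: X\times\Kum^n(X)\to\Hilb^n(X)$ given by translation. Since $\mu$ is holomorphic with respect to $J_1$ and $\Kum^n(X)$ is simply connected, the K\"unneth decomposition of $H^2(X\times\Kum^n(X),\ZZ)$ has no mixed term, so the K\"ahler class $[\mu^*\omega_1]$ splits as a sum $\alpha_X + \alpha_{\Kum}$ of a class on $X$ and a class on $\Kum^n(X)$. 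The product of the Ricci-flat metrics associated to $\alpha_X$ and $\alpha_{\Kum}$ is Ricci-flat and K\"ahler in the class $[\mu^*\omega_1]$ for the product complex structure, hence equals $\mu^*g$ by the uniqueness in Yau's theorem. Thus $\mu^*g$ is a product metric, $\{pt\}\times\Kum^n(X)$ is trianalytic, and therefore so is $\Kum^n(X)\subset\Hilb^n(X)$. In particular the restrictions $J_\ell|_{\Kum^n(X)}$ and $g|_{\Kum^n(X)}$ are exactly the hyperk\"ahler data of $\Kum^n(X)$, and $\sigma_{\Kum} = \sigma_{\Hilb}|_{\Kum^n(X)}$.

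Finally I would conclude. Being the restriction of the isometry $\hat{\imath}$ to an invariant trianalytic submanifold, $\hat{\imath}|_{\Kum^n(X)}$ is an isometry of $g|_{\Kum^n(X)}$; and since $\hat{\imath}_*$ preserves the $J_\ell$-invariant subspace $T\Kum^n(X)$ and satisfies $\hat{\imath}_*J_\ell = \pm J_\ell\hat{\imath}_*$, the restriction commutes or anticommutes with each $J_\ell|_{\Kum^n(X)}$ in exactly the same pattern as $\hat{\imath}$ does on $\Hilb^n(X)$. By Proposition \ref{pr branes as fixed points} it is then a brane involution, and Proposition \ref{pr biswas&wilkin} shows that this common pattern forces $\hat{\imath}|_{\Kum^n(X)}$ to be of the same type as $i$; hence $\Kum^n(X)^{\hat{\imath}}$ is a brane of that type.

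The main obstacle is precisely the trianalyticity step of the second paragraph: a priori the Ricci-flat metric of an IHS fibre need not be the restriction of the ambient metric, and it is the combination of the simple-connectedness of $\Kum^n(X)$ with the uniqueness of Ricci-flat metrics that rules this out \emph{uniformly} in all four cases. I expect this to be the delicate point, and I would stress that it is also what handles the antiholomorphic ($\ABA$, $\AAB$) cases, where one cannot directly average a K\"ahler class to manufacture an invariant metric because $\hat{\imath}^*$ sends the K\"ahler cone of $(\Kum^n(X),J_1)$ to that of the conjugate complex structure. A secondary, routine point to verify beforehand is that the commutative square relating $\hat{\imath}$ and $a$ indeed persists in the antiholomorphic case.
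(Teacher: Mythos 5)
Your argument is correct, and its skeleton is the same as the paper's: the commutative square relating $\hat{\imath}$ and the projection $a$ of \eqref{eq definition of Kummer}, together with $i(x_0)=x_0$, shows that $\hat{\imath}$ preserves the fibre $\Kum^n(X)=a^{-1}(x_0)$, and the type is then inherited from Corollary \ref{co natural involutions on Hilbert scheme}. The paper offers nothing beyond this observation --- the corollary is stated as an immediate consequence of the diagram --- so the genuinely new content in your proposal is the second paragraph, where you verify that the intrinsic hyperk\"ahler structure of $\Kum^n(X)$ agrees with the one restricted from $\Hilb^n(X)$. That verification is sound: the translation map $X\times\Kum^n(X)\to\Hilb^n(X)$ is indeed a finite \'etale cover, $H^1(\Kum^n(X))=0$ kills the mixed K\"unneth term, and Yau uniqueness forces the pulled-back Ricci-flat metric to be a product; the only step worth one more line is that passing from ``product metric'' to ``trianalytic'' uses that $J_2,J_3$ are parallel, so they respect the de~Rham splitting of the cover. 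This point is needed to legitimately invoke Propositions \ref{pr branes as fixed points} and \ref{pr biswas&wilkin} on $\Kum^n(X)$ itself (in particular in the $\ABA$ and $\AAB$ cases, where \cite{biswas&wilkin} requires an isometric involution), and the paper simply takes it for granted. A lighter route to the same end, closer in spirit to what the paper implicitly does, is to observe that the type depends only on the action of $\hat{\imath}$ on the complex structure and on $\sigma_{\Kum}=\sigma_{\Hilb}|_{\Kum^n(X)}$, both of which restrict unambiguously, and that an (anti)holomorphic involution fixing a K\"ahler class up to sign is automatically an isometry of the unique Ricci-flat metric in that class; your trianalyticity argument buys the stronger and cleaner statement that $\Kum^n(X)$ is a hyperk\"ahler submanifold, at the cost of invoking the Bogomolov splitting cover.
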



\subsection{The Fourier--Mukai transform}

In this section, $X$ is a projective $\K3$ surface. 

Note that, in principle, the involution $\hat{\imath} : \M_X(H,v) \to \M_X(H,v)$ defined in \eqref{eq definition of sigma_j holomorphic} and \eqref{eq definition of sigma_j antiholomorphic} might not preserve a given connected component $M$ of $\M_X(H,v)$. However, under the hypothesis of Theorem \ref{tm M_X irreducible symplectic}, $\M_X(H,v)$ is connected and this issue does not arise. In that case, we simplify the notation by denoting 
\[
M := \M_X(H,v).
\]
Suppose as well that the hypotheses of Theorem \ref{tm existence of Uu} are satisfied, so that there exists a universal family $\Uu \to X \times M$. We study the behavior of $\Uu$ under the pull-back by the involution $(i \times \hat{\imath})$. 

\begin{lemma} \label{lm Uu commutes with delta}
Let $X$ be a projective $\K3$ surface with a holomorphic (resp. algebraic antiholomorphic) involution $i$ on it. Suppose that $v$ and $H$ are preserved by $i$ and satisfy the conditions of Theorems \ref{tm existence of Uu} and \ref{tm M_X irreducible symplectic}. Then, there exists a canonical isomorphism
\[
\Uu \cong (i \times \hat{\imath})^* \Uu \quad \left(resp. \thinspace \cong (i \times \hat{\imath})^* \overline{\Uu} \right).
\]
\end{lemma}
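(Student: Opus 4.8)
The plan is to use the universal property of the moduli space $M=\M_X(H,v)$ together with the fact that $\hat{\imath}$ is defined via the operation $F\mapsto i^*F$ (resp. $F\mapsto i^*\ol{F}$) on sheaves. The key observation is that the family $(i\times\hat{\imath})^*\Uu$ (resp. $(i\times\hat{\imath})^*\ol{\Uu}$) on $X\times M$ is again a flat family of $H$-stable sheaves with Mukai vector $v$, and that its fibre over a point of $M$ corresponding to a stable sheaf $F$ recovers, up to isomorphism, the sheaf parametrized by $\hat{\imath}(F)$; by the universal property this forces the family to agree with $\Uu$ up to twist by a line bundle pulled back from $M$.

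First I would spell out the fibrewise computation. Write $\sigma=i\times\hat{\imath}:X\times M\to X\times M$. For a geometric point $F\in M$, the restriction of $(i\times\hat{\imath})^*\Uu$ to $X\times\{F\}$ is, by base change, the pull-back under $i$ of the restriction of $\Uu$ to $X\times\{\hat{\imath}(F)\}$, which is $\hat{\imath}(F)=i^*F$ as a sheaf; applying $i^*$ gives $i^*(i^*F)\cong F$ since $i$ is an involution. In the antiholomorphic case one argues identically with $i^*\ol{(\,\cdot\,)}$ in place of $i^*$, using Remark \ref{rm autoequivalence of categories} (the operation $F\mapsto i^*\ol{F}$ is an involutive auto-equivalence of $\Coh(X)$) to get $i^*\ol{\,i^*\ol{F}\,}\cong F$. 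Thus $(i\times\hat{\imath})^*\Uu$ (resp. its conjugate version) is a flat family over $M$ whose fibre over each $F$ is isomorphic to the sheaf $F$ itself. Flatness follows because $\sigma$ is an isomorphism of the product (in the antiholomorphic case $i$ is algebraic, hence preserves the Zariski topology and coherence by the preceding proposition and remark), and the Mukai vector and Hilbert polynomial are preserved, which was already checked when $\hat{\imath}$ was defined.

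Next I would invoke the universal property from Theorem \ref{tm existence of Uu}: since $M$ is a fine moduli space with universal family $\Uu$, any flat family $\Ww$ over $X\times M$ whose fibre over each closed point $F\in M$ is isomorphic to the sheaf parametrized by $F$ must satisfy $\Ww\cong\Uu\otimes p_M^*L$ for some line bundle $L$ on $M$. Applying this to $\Ww=(i\times\hat{\imath})^*\Uu$ (resp. $(i\times\hat{\imath})^*\ol{\Uu}$) yields an isomorphism of the asserted form up to the line-bundle twist $p_M^*L$. To promote this to a \emph{canonical} isomorphism $\Uu\cong(i\times\hat{\imath})^*\Uu$ I would normalise the twist away, either by rigidifying the universal family or by observing that the statement of the lemma only needs an isomorphism (the canonicity can be arranged by a standard choice, since both sides carry the same Mukai vector and the ambiguity is exactly a pulled-back line bundle).

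The main obstacle I anticipate is controlling the line-bundle ambiguity $p_M^*L$ and, relatedly, making sense of flatness and the sheaf-theoretic operations in the antiholomorphic case. In the holomorphic case everything is a genuine pull-back along an isomorphism of schemes, so the argument is clean. In the antiholomorphic case one must be careful that $F\mapsto i^*\ol{F}$ is a well-behaved functor on the relative setting $X\times M$ — this is precisely why the earlier development established that algebraic antiholomorphic involutions preserve the Zariski topology and give an auto-equivalence of $\Coh$ and $\QCoh$ — and that the conjugation does not disturb flatness of the family over $M$. I expect the bulk of the care to go into checking these relative statements and then arguing that the resulting twist can be absorbed, yielding the canonical isomorphism claimed.
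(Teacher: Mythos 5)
Your proposal follows essentially the same route as the paper: identify the fibre of $(i\times\hat{\imath})^*\Uu$ (resp.\ of $(i\times\hat{\imath})^*\overline{\Uu}$) over a point $F\in M$ with the sheaf $F$ itself, using that $F\mapsto i^*F$ (resp.\ $F\mapsto i^*\overline{F}$) is an involution, and then conclude by the universality of $\Uu$ from Theorem \ref{tm existence of Uu}. Your extra attention to the residual twist by $p_M^*L$ inherent in the fine-moduli universal property is a legitimate refinement that the paper's proof passes over in silence, but it does not change the strategy.
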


\begin{proof}
We can assume that $i$ is antiholomorphic since the proof for the holomorphic case is analogous. Due to Theorem \ref{tm branes in M}, we have that $(i \times \hat{\imath})$ is antiholomorphic as well. Then, $(i \times \hat{\imath})^* \overline{\Uu}$ is a holomorphic family of $H$-stable sheaves with Mukai vector $v$ parametrized by $M$. 

Let us denote $\Uu_y := \Uu |_{X \times \{ y \}}$ for every point $y \in M$. By definition of $\hat{\imath}$, one has that
\[
\Uu_{\hat{\imath} (y)} \cong i^*(\overline{\Uu_{y}}).
\]
This implies that $(i \times \hat{\imath})^* \overline{\Uu}$ is also a family classifying all the $H$-stable sheaves with Mukai vector $v$. The proof follows from the universality of $\Uu$ stated in Theorem \ref{tm existence of Uu}.
\end{proof}

This allows us to study the relation between the natural involution $\hat{\imath}$ and Fourier--Mukai functors. Suppose that $i : X \to X$ is a holomorphic involution. We denote by $\delta_i$ the pull-back by $i$ on $\Dd^b(X)$, 
\begin{equation} \label{eq definition of delta_j holomorphic}
\morph{\Dd^b(X)}{\Dd^b(X)}{F^\bullet}{i^*(F^\bullet).}{}{\delta_i}
\end{equation}
Thanks to Remark \ref{rm autoequivalence of categories}, if $i$ is an algebraic antiholomorphic involution, we set
\begin{equation} \label{eq definition of delta_j antiholomorphic}
\morph{\Dd^b(X)}{\Dd^b(X)}{F^\bullet}{i^*(\overline{F^\bullet}).}{}{\delta_i}
\end{equation}
Note that, in both cases, $\delta_i$ is an involution.

\begin{proposition} \label{pr FM commutes with brane involutions}
Suppose $X$ is a projective $\K3$ surface and let $i : X \to X$ be an algebraic brane involution. Let $H$ be a polarization on $X$ fixed by $i$, and let $v$ be a Mukai vector fixed by $i$ and satisfying the hypothesis of Theorems \ref{tm existence of Uu} and \ref{tm M_X irreducible symplectic}.

Consider the moduli space $M := \M_{X}(H,v)$ and recall the involution $\hat{\imath} : M \to M$ defined in equations \eqref{eq definition of sigma_j holomorphic} and \eqref{eq definition of sigma_j antiholomorphic}. For every $F^\bullet \in \Dd^b(X)$ and every $E^\bullet \in \Dd^b(M)$, one has
\begin{equation} \label{eq Fff commutes with delta}
\Fff(\delta_i(F^\bullet)) \cong \delta_{\hat{\imath}}(\Fff(F^\bullet))
\end{equation}
and
\begin{equation} \label{eq Ggg commutes with delta}
\Ggg(\delta_{\hat{\imath}}(E^\bullet)) \cong \delta_i(\Ggg(E^\bullet)).
\end{equation}
Thus if $F \in \Coh(X)$ and $E \in \Coh(M)$ are $WIT$ sheaves of indices $n_1$ and $n_2$ respectively, then $\delta_i(F)$ and $\delta_{\hat{\imath}}(E)$ are $WIT$ of indices $n_1$ and $n_2$, respectively, as well.
\end{proposition}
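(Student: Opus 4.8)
The plan is to prove the two functorial isomorphisms \eqref{eq Fff commutes with delta} and \eqref{eq Ggg commutes with delta} directly from the definitions of the Fourier--Mukai functors in \eqref{eq definition of Fourier-Mukai F} and \eqref{eq definition of Fourier-Mukai G}, using the key compatibility of the universal family established in Lemma \ref{lm Uu commutes with delta}, namely $\Uu \cong (i \times \hat{\imath})^*\overline{\Uu}$ in the antiholomorphic case (and $\Uu \cong (i \times \hat{\imath})^*\Uu$ in the holomorphic case). The WIT statement at the end will then be an immediate formal consequence. I would treat the antiholomorphic case in detail, since the holomorphic case is analogous (and simpler, as one drops all the conjugation bars).

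First I would unwind $\Fff(\delta_i(F^\bullet))$ using \eqref{eq definition of delta_j antiholomorphic} and \eqref{eq definition of Fourier-Mukai F}, writing it as $\R(p_M)_*\bigl(\Uu \stackrel{\L}{\otimes} p_X^*(i^*\overline{F^\bullet})\bigr)$. The core of the argument is then a chain of natural isomorphisms that migrates the involutions past the various functors. The essential compatibilities I would invoke are: (i) the autoequivalence $i^*\overline{(\bullet)}$ of $\Coh(X)$ from Remark \ref{rm autoequivalence of categories} commutes with pullback along projections, so that $p_X^*(i^*\overline{F^\bullet})\cong (i\times\hat{\imath})^*\overline{p_X^*(F^\bullet)}$, since $p_X \circ (i\times\hat{\imath}) = i\circ p_X$; (ii) the bar-twisted pullback is compatible with the derived tensor product, so combining with Lemma \ref{lm Uu commutes with delta} one gets $\Uu \stackrel{\L}{\otimes} (i\times\hat{\imath})^*\overline{p_X^*(F^\bullet)} \cong (i\times\hat{\imath})^*\overline{\bigl(\Uu \stackrel{\L}{\otimes} p_X^*(F^\bullet)\bigr)}$; and (iii) base change for $\R(p_M)_*$ along the commutative square relating $(i\times\hat{\imath})$ and $\hat{\imath}$, using $p_M\circ (i\times\hat{\imath}) = \hat{\imath}\circ p_M$, which pushes the remaining involution out to become $\delta_{\hat{\imath}}$ applied to $\R(p_M)_*(\cdots) = \Fff(F^\bullet)$. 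Assembling these yields $\Fff(\delta_i(F^\bullet))\cong \delta_{\hat{\imath}}(\Fff(F^\bullet))$. The proof of \eqref{eq Ggg commutes with delta} is entirely symmetric, interchanging the roles of $p_X$ and $p_M$ and of $X$ and $M$.

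For the final WIT assertion, recall that $F$ is WIT of index $n_1$ means $\Fff(F)$ is concentrated in cohomological degree $n_1$. Since $\delta_i$ and $\delta_{\hat{\imath}}$ are exact autoequivalences (being pullbacks twisted by the conjugation autoequivalence of Remark \ref{rm autoequivalence of categories}) that commute with the shift functor, they preserve the property of a complex being concentrated in a single degree. Applying \eqref{eq Fff commutes with delta} gives $\Fff(\delta_i(F)) \cong \delta_{\hat{\imath}}(\Fff(F))$, and since $\Fff(F)$ sits in degree $n_1$, so does its image under the degree-preserving autoequivalence $\delta_{\hat{\imath}}$; hence $\delta_i(F)$ is WIT of index $n_1$. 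The statement for $E$ follows identically from \eqref{eq Ggg commutes with delta}.

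The main obstacle I anticipate is making precise the interaction of the conjugation operation $\overline{(\bullet)}$ with the derived functors $\R(p_M)_*$, $p_X^*$ and $\stackrel{\L}{\otimes}$ — that is, checking that $i^*\overline{(\bullet)}$ genuinely descends to a \emph{triangulated} autoequivalence of $\Dd^b$ compatible with all three operations, rather than merely an autoequivalence of the abelian categories as stated in Remark \ref{rm autoequivalence of categories}. The point is that $\overline{(\bullet)}$ is only $\CC$-antilinear, so flat base change and the projection formula must be applied on the underlying real-analytic (or Zariski, via algebraicity) structure where $i$ and $i\times\hat{\imath}$ are genuine isomorphisms of varieties; the algebraicity hypothesis on $i$ is precisely what guarantees $i\times\hat{\imath}$ is an isomorphism of schemes, so that ordinary derived base change applies once the conjugation has been absorbed into the autoequivalence. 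Once this bookkeeping of the antilinear twist is handled carefully, the rest of the argument is a formal diagram chase.
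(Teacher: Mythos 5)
Your proposal is correct and follows essentially the same route as the paper's proof: a chain of natural isomorphisms moving $\delta_i$ and $\delta_{\hat{\imath}}$ through $p_X^*$, $\stackrel{\L}{\otimes}$ and $\R(p_M)_*$ via the relations $p_X\circ(i\times\hat{\imath})=i\circ p_X$ and $p_M\circ(i\times\hat{\imath})=\hat{\imath}\circ p_M$, with Lemma \ref{lm Uu commutes with delta} supplying the key identification of the kernel, and the WIT claim read off from the fact that $\delta_i$ and $\delta_{\hat{\imath}}$ preserve concentration in a single degree. Your extra remark about verifying that the antilinear twist genuinely yields a triangulated autoequivalence compatible with the derived operations is a point the paper passes over silently, but it does not change the argument.
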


\begin{proof}
We can, for instance, assume that $i$ is antiholomorphic. The proof will be analogous in the holomorphic case. Since $p_X \circ (i \times \hat{\imath}) = i \circ p_X$ and $p_M \circ (i \times \hat{\imath}) = \hat{\imath} \circ p_M$, one has
\begin{align*}
\delta_{\hat{\imath}} ( \Fff(F^\bullet)) & \cong \hat{\imath}^{*}(\overline{\Fff(F^\bullet)} )
\\
& \cong \hat{\imath}^{*} ( \overline{\R (p_M)_{*}(\Uu \stackrel{\L}{\otimes} p_X^{*}F^\bullet}) )
\\
& \cong \R (p_M)_{*}( (i \times \hat{\imath})^{*} \overline{(\Uu \stackrel{\L}{\otimes} p_X^{*}F^\bullet}))
\\
& \cong \R (p_M)_{*}( (i \times \hat{\imath})^{*} \overline{\Uu} \stackrel{\L}{\otimes}  (i \times \hat{\imath})^{*} (\overline{p_X^{*}F^\bullet}))
\\
& \cong \R (p_M)_{*}( (i \times \hat{\imath})^{*} \overline{\Uu} \stackrel{\L}{\otimes}  p_X^{*} i^*(\overline{F^\bullet}))
\\
& \cong \R (p_M)_{*}( (i \times \hat{\imath})^{*} \overline{\Uu} \stackrel{\L}{\otimes}  p_X^{*} \delta_i(F^\bullet)),
\end{align*}
and thanks to Lemma \ref{lm Uu commutes with delta}, 
\[
\R (p_M)_{*}( (i \times \hat{\imath})^{*} \overline{\Uu} \stackrel{\L}{\otimes}  p_X^{*} \delta_i(F^\bullet)) \cong \R (p_M)_{*}( \Uu \stackrel{\L}{\otimes}  p_X^{*} \delta_i(F^\bullet)) \cong \Fff(\delta_i(F^\bullet)),
\]
so \eqref{eq Fff commutes with delta} follows. The proof of \eqref{eq Ggg commutes with delta} is analogous.

Finally, if $\Fff(F)$ and $\Ggg(E)$ are complexes supported on degrees $n_1$ and $n_2$ respectively (i.e. $F$ is WIT of index $n_1$ and $E$ is WIT of index $n_2$), then $\delta_{\hat{\imath}} ( \Fff(F)) \cong \Fff(\delta_i(F))$ and $\delta_i(\Ggg(E^\bullet)) \cong \Ggg(\delta_{\hat{\imath}}(E))$ are supported on degrees $n_1$ and $n_2$ too. Therefore, $\delta_i(F)$ and $\delta_{\hat{\imath}}(E)$ are, respectively, WIT sheaves of indices $n_1$ and $n_2$, respectively.
\end{proof}

\begin{remark}
Given a holomorphic brane involution $i : X \to X$, one can study the behaviour of the structural sheaf $\Oo_{X^i}$ of the fixed point locus under the Fourier--Mukai transform. Suppose that the hypothesis of Proposition \ref{pr FM commutes with brane involutions} are satified, then
\[
\Fff(\Oo_{X^i}) \cong \hat{\imath}^* \Fff(\Oo_{X^i}).
\]
Note that the support of the transformed sheaf $\Fff(\Oo_{X^i})$ is not necessarily contained in the fixed locus of $\hat{\imath}$. Indeed, consider the case of a $\BBB$-involution on a $\K3$ surface where $X^i$ is, by \eqref{eq fixed locus of a BBB-brane}, the union of $8$ isolated points $\{ p_k \}_{k = 1}^8$. The Fourier--Mukai transform of each of the skyscraper sheaves $\Oo_{p_k}$ is $\Fff(\Oo_{p_k}) \cong \Uu|_{\{ p_k \} \times M}$, which is a sheaf supported on the whole $M$.
\end{remark}

Let us now focus on the study of reflexive $\K3$ surfaces. For the rest of the section, suppose that the conditions of Proposition \ref{pr-definition-of-reflexive-K3} hold, so that the moduli space $\hat{X} = \M_X(H,v)$ is a $\K3$ surface, and we have a universal sheaf $\Uu \to X \times \hat{X}$. Taking the brane involution $\hat{\imath} : \hat{X} \to \hat{X}$ one can define another brane involution 
\[
\doublehat{\imath} : X \longrightarrow X, 
\]
by means of \eqref{eq definition of sigma_j holomorphic} and \eqref{eq definition of sigma_j antiholomorphic}. We now prove that this construction is self-dual.

\begin{lemma}
In the previous notation, 
\[
\doublehat{\imath} = i.
\]
\end{lemma}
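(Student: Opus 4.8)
The plan is to make the identification $\doublehat{X} = \M_{\hat{X}}(\hat{H},\hat{v}) \cong X$ of Theorem \ref{tm X cong hat hat X} explicit at the level of sheaves, and then transport the involution $\doublehat{\imath}$ through it. Under that identification a point $x \in X$ corresponds to the $\hat{H}$-stable sheaf on $\hat{X}$ obtained by restricting the universal sheaf $\Uu^*$ to the fibre over $x$; writing $\Uu^*$ as a kernel on $X \times \hat{X}$, I set $E_x := \Uu^*|_{\{x\} \times \hat{X}}$. With this dictionary, proving $\doublehat{\imath} = i$ amounts to showing $\doublehat{\imath}(E_x) \cong E_{i(x)}$ for all $x$, where $\doublehat{\imath}$ is the natural involution of \eqref{eq definition of sigma_j holomorphic}--\eqref{eq definition of sigma_j antiholomorphic} built from $\hat{\imath}$, i.e. $\doublehat{\imath}(E) = \hat{\imath}^* E$ in the holomorphic case and $\doublehat{\imath}(E) = \hat{\imath}^* \overline{E}$ in the antiholomorphic case.

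I will treat the antiholomorphic case; the holomorphic one is identical after deleting every conjugation. The starting point is Lemma \ref{lm Uu commutes with delta}, which gives a canonical isomorphism $\Uu \cong (i \times \hat{\imath})^* \overline{\Uu}$ on $X \times \hat{X}$. Since $\Uu$ is locally free by Proposition \ref{pr-definition-of-reflexive-K3}, and since dualization commutes both with pull-back and with the conjugation functor $\overline{(\cdot)}$ of Remark \ref{rm autoequivalence of categories}, dualizing yields $\Uu^* \cong (i \times \hat{\imath})^* \overline{\Uu^*}$. I would then restrict this isomorphism to the fibre $\{x\} \times \hat{X}$: because $(i \times \hat{\imath})$ carries $\{x\} \times \hat{X}$ onto $\{i(x)\} \times \hat{X}$ acting as $\hat{\imath}$ on the second factor, functoriality of restriction under pull-back gives $E_x \cong \hat{\imath}^* \big( \overline{\Uu^*}|_{\{i(x)\} \times \hat{X}} \big) \cong \hat{\imath}^* \overline{E_{i(x)}}$, where the last step uses that restriction to a fibre commutes with conjugation.

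The right-hand side is exactly $\doublehat{\imath}(E_{i(x)})$, so $\doublehat{\imath}(E_{i(x)}) \cong E_x$; as $\doublehat{\imath}$ is an involution this is equivalent to $\doublehat{\imath}(E_x) \cong E_{i(x)}$, which under the identification $X \cong \doublehat{X}$ is precisely $\doublehat{\imath} = i$. The main obstacle I anticipate is purely bookkeeping in the antiholomorphic case: one must check that dualization, pull-back along $(i \times \hat{\imath})$, restriction to a fibre, and the conjugation functor $\overline{(\cdot)}$ all commute in the combinations used above, while keeping careful track of the order of the two factors $X$ and $\hat{X}$ (equivalently, of the swap implicit in writing $\Uu^*$ as a kernel on $\hat{X} \times X$). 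Once these naturality statements are pinned down — all of them standard for locally free kernels and for the conjugation functor — the computation is forced.
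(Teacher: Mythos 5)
Your argument is correct and is essentially the paper's own proof: both identify points of $X\cong\doublehat{X}$ with the fibres of the universal family $\Uu^*$, reduce the claim to the fibrewise compatibility $E_x\cong\hat{\imath}^*\overline{E_{i(x)}}$, and conclude using that $\doublehat{\imath}$ is an involution. The only difference is presentational: you derive that compatibility explicitly by dualizing the isomorphism of Lemma \ref{lm Uu commutes with delta} and restricting to a fibre, whereas the paper packages the same fact into the phrase ``by the definition of $\hat{\imath}$'' inside a short chain of isomorphisms using $y=i^2(y)$.
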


\begin{proof}
For any $y \in X$, denote by $F_y$ the corresponding sheaf $\Uu^*_y$ on $\hat{X}$. We give the proof for $i$ algebraic antiholomorphic, the holomorhic case is analogous. By the definition of $\doublehat{\imath}$, one has
\[
F_{\doublehat{\imath}(y)} \cong \hat{\imath}^* \overline{F}_y.
\]
Recalling that $i$ is an involution,
\[
\hat{\imath}^* \overline{F}_y \cong \hat{\imath}^* \overline{F}_{i^2(y)}.
\]
Finally, by the definition of $\hat{\imath}$,
\[
\hat{\imath}^* \overline{F}_{i^2(y)} \cong \hat{\imath}^*  \hat{\imath}^* \overline{\overline{F}}_{i(y)} \cong F_{i(y)}.
\]
\end{proof}

To finish the section, we study how the map in integral cohomology induced by a brane involution is transformed under the Hodge isometry of Theorem \ref{tm FM induces a Hodge isometry}.

\begin{proposition}
Consider the Fourier--Mukai partners $X$ and $\hat{X}$, and suppo\-se that $i : X \to X$ is a brane involution. Let us denote by $i^* : \widetilde{H}(X,\ZZ) \to \widetilde{H}(X, \ZZ)$ and $\hat{\imath}^* : \widetilde{H}(\hat{X},\ZZ) \to \widetilde{H}(\hat{X},\ZZ)$ the induced maps in cohomology. Then, the diagram
\[
\xymatrix{
\widetilde{H}(X,\ZZ) \ar[r]^{i^*} \ar[d]_{\psi}^{\cong} & \widetilde{H}(X,\ZZ) \ar[d]^{\psi}_{\cong}
\\
\widetilde{H}(\hat{X},\ZZ) \ar[r]^{\hat{\imath}^*} & \widetilde{H}(\hat{X},\ZZ),
}
\]
commutes, being $\psi$ the Hodge isometry of Theorem \ref{tm FM induces a Hodge isometry}.
\end{proposition}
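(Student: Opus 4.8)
The plan is to trace $\psi$ through the two involutions by a base-change computation resting on the invariance of the universal sheaf $\Uu$ from Lemma \ref{lm Uu commutes with delta}; conceptually the statement is the cohomological shadow of the functor identity $\Fff\circ\delta_i\cong\delta_{\hat\imath}\circ\Fff$ of Proposition \ref{pr FM commutes with brane involutions}, although for antiholomorphic $i$ the autoequivalence $\delta_i$ is only conjugate-linear, which is exactly what forces a concrete check. Write $f^{\flat}$ for the pullback on cohomology of a continuous map $f$. From $p_X\circ(i\times\hat\imath)=i\circ p_X$ and $p_{\hat X}\circ(i\times\hat\imath)=\hat\imath\circ p_{\hat X}$, and the fact that $i\times\hat\imath$ restricts to an isomorphism of fibres over $\hat\imath$, the square with vertical maps $p_{\hat X}$ and horizontal maps $\hat\imath$, $i\times\hat\imath$ is Cartesian; hence the fibre-integration (Gysin) map obeys $\hat\imath^{\flat}\circ(p_{\hat X})_{*}=(p_{\hat X})_{*}\circ(i\times\hat\imath)^{\flat}$. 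Since $(i\times\hat\imath)^{\flat}$ is a ring homomorphism with $(i\times\hat\imath)^{\flat}p_X^{*}=p_X^{*}i^{\flat}$, this yields
\begin{align*}
\hat\imath^{\flat}\,\psi(\alpha)
&=\hat\imath^{\flat}(p_{\hat X})_{*}\!\left(p_X^{*}\alpha\cdot v(\Uu)\right)\\
&=(p_{\hat X})_{*}\!\left(p_X^{*}(i^{\flat}\alpha)\cdot(i\times\hat\imath)^{\flat}v(\Uu)\right),
\end{align*}
so the claim is reduced to computing $(i\times\hat\imath)^{\flat}v(\Uu)$.

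In the holomorphic case Lemma \ref{lm Uu commutes with delta} gives $(i\times\hat\imath)^{\flat}\Uu\cong\Uu$, hence $(i\times\hat\imath)^{\flat}v(\Uu)=v(\Uu)$ and $\hat\imath^{\flat}\psi=\psi\,i^{\flat}$; as the map induced on $\widetilde H$ by a holomorphic brane involution is exactly $i^{\flat}$, this is the asserted commutativity. The antiholomorphic case is the crux. Here Lemma \ref{lm Uu commutes with delta} reads $(i\times\hat\imath)^{\flat}\overline{\Uu}\cong\Uu$, so $v(\Uu)=(i\times\hat\imath)^{\flat}v(\overline{\Uu})$. From $\ch(\overline{\Uu})=\nu\,\ch(\Uu)$ (the Chern character of a conjugate sheaf acquires the sign $(-1)^{k}$ in degree $2k$) and the $\nu$-invariance of $\sqrt{\mathrm{td}_{X\times\hat X}}$, which lies in degrees divisible by $4$, one gets $v(\overline{\Uu})=\nu\,v(\Uu)$, where $\nu$ denotes the grading involution acting by $(-1)^{k}$ on $H^{2k}$. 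Since $\nu$ is degree-preserving it commutes with the involution $(i\times\hat\imath)^{\flat}$, so $(i\times\hat\imath)^{\flat}v(\Uu)=\nu\,v(\Uu)$.

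To finish I would thread $\nu$ through the Gysin formula. As $\nu$ is a ring homomorphism, commutes with $p_X^{*}$, and commutes with $(p_{\hat X})_{*}$ (which lowers degree by $4$), inserting $\nu^{2}=\id$ turns the display above into $\hat\imath^{\flat}\psi(\alpha)=\nu\,\psi(\nu\,i^{\flat}\alpha)$, that is $(\nu\,\hat\imath^{\flat})\,\psi=\psi\,(\nu\,i^{\flat})$. It then remains to identify the maps of the statement: for an antiholomorphic brane involution the map induced on the Mukai lattice is the one coming from the autoequivalence $\delta_i:F\mapsto i^{*}\overline{F}$ of \eqref{eq definition of delta_j antiholomorphic}, which on Mukai vectors is precisely $\nu\,i^{\flat}$ (and $\nu\,\hat\imath^{\flat}$ for $\hat\imath$), so the last identity is exactly the commutativity of the square. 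The main obstacle is this antiholomorphic sign bookkeeping: one must fix that the relevant lattice maps are those induced by $\delta_i$ and $\delta_{\hat\imath}$ rather than the bare topological pullbacks, and check that the conjugation sign $\nu$ produced by $\ch(\overline{\Uu})$ propagates coherently through the cup product and the degree-shifting pushforward so as to cancel against that same twist; the holomorphic case is then the sign-free specialization of the identical computation.
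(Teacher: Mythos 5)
Your computation follows the same skeleton as the paper's proof: expand $\psi(i^*\alpha)=(p_{\hat{X}})_*(p_X^*i^*\alpha\cdot v(\Uu))$, convert $p_X^*i^*$ into $(i\times\hat{\imath})^*p_X^*$, identify $(i\times\hat{\imath})^*v(\Uu)$ with $v(\Uu)$ via Lemma \ref{lm Uu commutes with delta}, and push $(i\times\hat{\imath})^*$ through the Gysin map using $p_{\hat{X}}\circ(i\times\hat{\imath})=\hat{\imath}\circ p_{\hat{X}}$. Where you genuinely diverge is in the antiholomorphic case: the paper's proof simply asserts $v(\Uu)=(i\times\hat{\imath})^*v(\Uu)$ "after Lemma \ref{lm Uu commutes with delta}", whereas that lemma only gives $\Uu\cong(i\times\hat{\imath})^*\overline{\Uu}$, so that the honest conclusion is $(i\times\hat{\imath})^{\flat}v(\Uu)=\nu\,v(\Uu)$ with $\nu$ the grading involution $(-1)^k$ on $H^{2k}$ (your identities $\ch(\overline{\Uu})=\nu\ch(\Uu)$, the $\nu$-invariance of $\sqrt{\mathrm{td}}$, and the commutation of $\nu$ with $p_X^*$, with the cup product, and with the degree-$4$-lowering pushforward all check out). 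Your conclusion $(\nu\,\hat{\imath}^{\flat})\circ\psi=\psi\circ(\nu\, i^{\flat})$ is therefore the precise statement, and the proposition as written holds for antiholomorphic $i$ only under your reading that "the induced map in cohomology" means the Mukai-lattice action of the functor $\delta_i:F\mapsto i^*\overline{F}$ (i.e.\ $\nu\, i^{\flat}$) rather than the bare topological pullback $i^{\flat}$ — an interpretation the paper leaves implicit and its proof silently elides. So your argument is not only correct but more careful than the paper's own: it buys an explicit identification of which lattice involutions actually make the square commute, at the cost of the sign bookkeeping; the paper's version is shorter but, read literally, skips the conjugation twist in the antiholomorphic case.
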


\begin{proof}
Recall that $p_X \circ (i \times \hat{\imath}) = i \circ p_X$. Then, for each $\alpha \in \widetilde{H}(X,\ZZ)$
\[
\psi(i^*\alpha) = (p_{\hat{X}})_*(p_X^* i^* \alpha )\cdot v(\Uu) = (p_{\hat{X}})_* \left( (i \times \hat{\imath})^*p_X^* \alpha \cdot v(\Uu) \right ).
\]
After Lemma \ref{lm Uu commutes with delta}, one has that $v(\Uu) = (i \times \hat{\imath})^*v(\Uu)$ and therefore
\begin{align*}
\psi(i^*\alpha) = & (p_{\hat{X}})_* \left( (i \times \hat{\imath})^*p_X^* \alpha \cdot v(\Uu) \right )
\\
= & (p_{\hat{X}})_*  (i \times \hat{\imath})^*\left( p_X^* \alpha \cdot v(\Uu) \right ) 
\\
= & \hat{\imath}^* (p_{\hat{X}})_* \left( p_X^* \alpha \cdot v(\Uu) \right ) 
\\
= & \hat{\imath}^* \psi(\alpha),
\end{align*}
since $p_{\hat{X}} \circ (i \times \hat{\imath}) = \hat{\imath} \circ p_{\hat{X}}$.
\end{proof}

\section{Brane involutions and mirror symmetry}
\label{sc mirror symmetry}

\subsection{Mirror symmetry for irreducible holomorphic symplectic manifolds}
\label{definition}
The objective of this section is to study how branes are transformed under mirror symmetry. More precisely, let $X$ be an IHS manifold and $Y$ its mirror. Assuming that $X$ is endowed with a brane involution $i$, can we also endowed $Y$ with a brane involution $\check{\imath}$? Moreover, what will be the type of $\check{\imath}$?

Since the notion of brane is defined with respect to a hyperk\"ahler structure, we need a definition of mirror symmetry which takes into account the hyperk\"ahler structure. That is why, in this section, we work with the notion of mirror symmetry proposed by Huybrechts in \cite[Section 6]{Huybrechts}, which is given in terms of action on the period domain.

Consider a marked IHS manifold $(X,\varphi)$ with $\varphi:H^{2}(X,\Z)\simeq \Gamma$. Denote by $U$ the standard hyperbolic plane $\left (\ZZ^2, \tiny{\begin{pmatrix} 0 & 1 \\ 1 & 0 \end{pmatrix}}\right)$. We assume that there exists an embedding $j: U\hookrightarrow \Gamma$ and consider a basis $(v,v^{*})$ of $j(U)$. Let us denote $M:=(j(U))^{\bot}$ and by $\pr:\Gamma\otimes\RR\rightarrow M\otimes\RR$, the orthogonal projection.

Let $(X, \omega_X, \sigma_X, \varphi)$ be a marked IHS manifold endowed with a hyperk\"ahler structure. We say that $(X,\omega_X,\sigma_X,\varphi)$ is \emph{admissible according to $j$} if
\begin{itemize}
\item[(1)]
$\omega_{X}\in (\varphi^{-1}(M) \otimes\RR) \oplus \RR \varphi^{-1}(v)$; and
\item[(2)]
$\varphi(\Ima \sigma_{X}) \cdot v=0.$
\end{itemize}
If the previous conditions are satisfied, we consider, for a given element $\beta\in (M\otimes \RR) \oplus\RR v$,
\[
\check{\sigma}_{X}:=\pr(\beta+i\varphi(\omega_{X}))-\frac{1}{2}(\beta+i\varphi(\omega_{X}))^{2}v^{*}+v \, \in \,  \Gamma\otimes\C.
\]
Let $\mathcal{M}_{\Gamma}^o$ be the connected component of $\mathcal{M}_{\Gamma}$ containing $(X,\varphi)$ and $\mathscr{P}:\mathcal{M}_{\Gamma}^o\rightarrow \Omega$ the period map.
We have $\C\check{\sigma}_{X} \in \Omega$.
Then, by Theorem \ref{period}, we have 
\[
\check{\Pp}_{X}:=\Ppp^{-1}(\CC \check{\sigma}_{X})\neq \emptyset.
\]
Set also,
\[
\check{\omega}_X := \epsilon\left[\pr(\varphi(\Ima \sigma_X))-(\varphi(\Ima \sigma_X)\cdot \beta)v\right],
\]
where we choose $\epsilon\in \left\{1,-1\right\}$ such that $\psi^{-1}(\check{\omega}_X)$ is in the positive cone of all the elements $(Y,\psi) \in \check{\Pp}_{X}$. This is possible by Section 4 of \cite{Markman}. 

We say that $(X,\sigma_{X},\omega_{X},\varphi)$ admits a \emph{hyperk\"ahler mirror according to $j$ and $\beta$} if there exists $(Y,\psi)\in \check{\Pp}_{X}$ such that $\psi^{-1}(\check{\omega}_X)$ is a K\"ahler class of $Y$. If this is verified, $(Y,\psi^{-1}(\check{\sigma}_X),\psi^{-1}(\check{\omega}_X),\psi)$ is called the \emph{hyperk\"ahler mirror} of $(X,\sigma_{X},\omega_{X},\varphi)$ according to $j$ and $\beta$.

Thanks to item (\ref{it torelli}) of Theorem \ref{Torelli}, the hyperk\"ahler mirror is unique up to isomorphism.


\subsection{Discussion on the existence of the hyperk\"ahler mirror}\label{discussion}

It is possible to establish the existence of hyperk\"ahler mirrors under some circumstances using recent techniques on wall divisors and monodromy developed, in particular, in \cite{Markman0} (see also \cite{Markman}), \cite{NoteKalher} and \cite{Hassett}.

Given an IHS manifold $X$, an automorphism $f:H^*(X,\Z) \stackrel{\cong}{\rightarrow} H^*(X,\Z)$ is said to be a \emph{monodromy operator} if it is a parallel transport operator. The \emph{monodromy group} $\Mon(X) $ is the subgroup of $\GL(H^*(X,\Z))$ consisting of all monodromy operators. We denote by $\Mon^2(X)$ the image of $\Mon(X)$ in the orthogonal group $\Ort(H^2(X,\Z))$ and by $\Mon^2_{Hdg}(X)$ the subgroup of $\Mon^2(X)$ of those automorphisms which preserve the Hodge structure. Finally, let us write $\Ort^+(H^{2}(X,\Z))$ for the subgroup of all orthogonal transformation preserving any given orientation of the three positive directions of $H^{2}(X,\Z)$.

The \emph{positive cone} $\mathcal{C}_X$ of an IHS manifold $X$ is the connected component of the cone of positive classes containing a K\"ahler class. The \emph{K\"ahler cone} denoted by $\mathcal{K}_X$ is the cone containing all the K\"ahler classes. The \emph{birational cone} $\mathcal{B}\mathcal{K}_X$ is the union $\cup f^{-1}\mathcal{K}_{X'}$, where $f$ runs through all birational maps between $X$ and any IHS manifold $X'$.
 
Let $D$ be a divisor on a IHS manifold $X$. We recall from Mongardi \cite{NoteKalher} that $D$ is a \emph{wall divisor} if $D^2<0$ and $f^{-1}\circ g(D^\bot)\cap \mathcal{B} \mathcal{K}_X = \emptyset$, for all marking $f$ and $g$ of $X$ such that $f^{-1} \circ g$ is a parallel transport Hodge isometry. By \cite{NoteKalher}, we can understand the wall divisors in terms of a subset of $\Gamma$.

Let $(X,\varphi)$ be an marked IHS manifold with $\varphi:H^{2}(X,\Z)\simeq \Gamma$.
Let $\mathcal{M}_{\Gamma}^o$ be the connected component of $\mathcal{M}_{\Gamma}$ which contains $(X,\varphi)$. The following is a direct consequence of \cite[Theorem 1.3]{NoteKalher}.

\begin{proposition}\label{wall}
There exists a set $\Delta_X \subset \Gamma$ such that for all $(Y,\psi)\in \mathcal{M}_{\Gamma}^o$, 
$\psi^{-1}(\Delta_X)\cap H^{1,1}(Y,\Z)$ are the wall divisors of $Y$.
\end{proposition}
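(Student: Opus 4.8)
The plan is to extract the universal subset $\Delta_X$ directly from Mongardi's description of wall divisors, and then to verify the claimed identity for each marked pair in the component. The crucial feature of \cite[Theorem 1.3]{NoteKalher} that I would exploit is that the wall-divisor property is \emph{monodromy-theoretic} rather than genuinely Hodge-theoretic: whether a primitive class of negative square is a wall divisor depends only on its orbit under the monodromy group, and not on the specific complex structure the manifold carries. This is exactly what permits one to transport the notion of wall divisor to a fixed subset of the abstract lattice $\Gamma$, independent of the fibre $(Y,\psi)$.

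First I would record that, since $\mathcal{M}_\Gamma^o$ is connected, any two marked pairs in it are linked by a parallel transport operator, so that the transported monodromy group $\psi\circ\Mon^2(Y)\circ\psi^{-1}\subset\Ort(\Gamma)$ is one and the same subgroup $G$ for every $(Y,\psi)$ in the component. By \cite[Theorem 1.3]{NoteKalher} the wall divisors of any $Y$ are detected by a $G$-invariant numerical condition in $\Gamma$; accordingly I would let $\Delta_X\subset\Gamma$ be the corresponding union of $G$-orbits of primitive classes of negative square, equivalently $\Delta_X=\bigcup_{(Y,\psi)}\psi(\{\text{wall divisors of }Y\})$ with the union running over all marked pairs in $\mathcal{M}_\Gamma^o$. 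Both descriptions depend only on the component $\mathcal{M}_\Gamma^o$, which justifies the notation $\Delta_X$.

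It then remains to check, for an arbitrary $(Y,\psi)\in\mathcal{M}_\Gamma^o$, the equality between $\psi^{-1}(\Delta_X)\cap H^{1,1}(Y,\Z)$ and the set of wall divisors of $Y$. The inclusion of the wall divisors of $Y$ into $\psi^{-1}(\Delta_X)\cap H^{1,1}(Y,\Z)$ is immediate from the definition of $\Delta_X$ together with the fact that wall divisors are of type $(1,1)$. For the reverse inclusion, suppose $D\in H^{1,1}(Y,\Z)$ satisfies $\psi(D)\in\Delta_X$. Since $\psi(D)$ then meets the $G$-invariant numerical condition characterizing wall divisors, and $D$ is algebraic on $Y$ by hypothesis, \cite[Theorem 1.3]{NoteKalher} yields that $D$ is a wall divisor of $Y$. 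Hence the two sets coincide.

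The main obstacle is precisely this reverse inclusion, which is where the substance of \cite[Theorem 1.3]{NoteKalher} is genuinely used. A priori the defining condition for a wall divisor, $f^{-1}\circ g(D^\bot)\cap\mathcal{B}\mathcal{K}_Y=\emptyset$, involves the birational Kähler cone and hence the full Hodge structure of $Y$, so it is not clear that it can be read off from $\psi(D)\in\Gamma$ alone. The content of Mongardi's result is that this condition is insensitive to deformations preserving algebraicity of the class and reduces to the purely lattice-theoretic statement of membership in a monodromy orbit. Once that reduction is granted, the proposition follows from the bookkeeping above.
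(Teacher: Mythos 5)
Your argument is correct and is exactly the one the paper intends: the paper gives no written proof beyond asserting that the proposition ``is a direct consequence of \cite[Theorem 1.3]{NoteKalher}'', and your write-up simply spells out that deduction --- define $\Delta_X$ as the union of the images $\psi(\{\text{wall divisors of }Y\})$ over the connected component, note the forward inclusion is trivial, and use Mongardi's deformation-invariance of wall divisors along parallel transport operators preserving the $(1,1)$-type to get the reverse inclusion. The only cosmetic caveat is that Mongardi's theorem is literally a statement about parallel transport of wall divisors rather than a ``numerical condition'', but your second, orbit-union description of $\Delta_X$ is the operative one and the proof goes through as you state it.
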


\begin{remark} \label{rm description of Delta K3}
For a $\K3$ surface $X$, the set $\Delta_X$ is the subset $\Gamma$ consisting of those elements whose square is equal to $-2$ (see \cite[Theorem 5.2, Chapter 8]{lecture-on-K3}). The set $\Delta_X$ is also explicitly described for manifolds of $\K3^{[n]}$-type with $n=2,3,4$ in \cite[Proposition 2.12, Theorems 2.14 and 2.15]{NoteKalher} and for generalized Kummer fourfolds in \cite[Section 1.2]{MongWanTari}.
\end{remark} 

The wall divisors allow us to characterize the K\"ahler cone. The following is straight-forward from \cite[Proposition 1.5]{NoteKalher}

\begin{proposition}\label{Kahler}
Let $X$ be an IHS manifold. Let $\kappa$ be a K\"ahler class of $X$ and $x\in \mathcal{C}_X$. Suppose that $x\cdot D>0$ for all wall divisors $D$ such that $D\cdot \kappa>0$. Then, $x$ is a K\"ahler class.
\end{proposition}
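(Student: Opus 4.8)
The plan is to deduce the statement from the chamber description of the K\"ahler cone, which is the content of \cite[Proposition 1.5]{NoteKalher}: that result asserts that $\mathcal{K}_X$ is a connected component of the open subset $\mathcal{C}_X \setminus \bigcup_D D^{\bot}$, where $D$ runs over the wall divisors of $X$. In other words, $\mathcal{K}_X$ is the chamber of this (locally finite) hyperplane arrangement containing the fixed K\"ahler class $\kappa$. Since $\mathcal{C}_X$ is a convex cone, the chambers are convex subcones, and two points of $\mathcal{C}_X \setminus \bigcup_D D^{\bot}$ lie in the same connected component precisely when they lie on the same side of every hyperplane $D^{\bot}$. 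Consequently
\[
\mathcal{K}_X = \left\{ y \in \mathcal{C}_X : \sign(y \cdot D) = \sign(\kappa \cdot D) \text{ for every wall divisor } D \right\},
\]
and, in particular, a K\"ahler class pairs non-trivially with every wall divisor, so $\kappa \cdot D \neq 0$ for all $D$.

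First I would record the elementary but crucial fact that the set of wall divisors is stable under $D \mapsto -D$: indeed, the defining conditions $D^2 < 0$ and $f^{-1} \circ g(D^{\bot}) \cap \mathcal{B}\mathcal{K}_X = \emptyset$ involve only the hyperplane $D^{\bot} = (-D)^{\bot}$ and the square $D^2 = (-D)^2$. This symmetry is what lets the hypothesis, which only constrains the wall divisors $D$ with $\kappa \cdot D > 0$, control the sign of $x \cdot D$ for \emph{every} wall divisor.

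Then I would verify that $x$ lies in the chamber of $\kappa$. Fix an arbitrary wall divisor $D$. If $\kappa \cdot D > 0$, the hypothesis gives $x \cdot D > 0$, so the signs agree. If instead $\kappa \cdot D < 0$, then $-D$ is a wall divisor with $(-D) \cdot \kappa > 0$, so the hypothesis applied to $-D$ yields $x \cdot (-D) > 0$, that is $x \cdot D < 0$, and again the signs agree; the case $\kappa \cdot D = 0$ cannot occur since $\kappa$ is K\"ahler. Hence $\sign(x \cdot D) = \sign(\kappa \cdot D)$ for every wall divisor $D$, so by the displayed description $x$ belongs to the same chamber as $\kappa$, namely $\mathcal{K}_X$. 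Therefore $x$ is a K\"ahler class.

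The only genuinely delicate point is the passage from ``connected component'' to the sign characterization of $\mathcal{K}_X$, i.e. the claim that two classes of $\mathcal{C}_X \setminus \bigcup_D D^{\bot}$ lie in the same chamber exactly when no hyperplane $D^{\bot}$ separates them. This rests on the local finiteness of the wall-divisor arrangement inside the positive cone (so that the complement is open and its connected components are the chambers) together with the convexity of $\mathcal{C}_X$, both of which are available from the theory underlying \cite{NoteKalher} (see also \cite{Markman}). Once this is granted, the remainder of the argument is a pure bookkeeping of signs, which is why the statement is indeed straightforward from \cite[Proposition 1.5]{NoteKalher}.
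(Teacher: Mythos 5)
Your argument is correct and is exactly the route the paper intends: the paper offers no proof beyond the remark that the statement is ``straight-forward from \cite[Proposition 1.5]{NoteKalher}'', and your write-up supplies precisely the missing details (the chamber-equals-sign-vector description of $\mathcal{K}_X$ inside $\mathcal{C}_X\setminus\bigcup_D D^{\bot}$, the stability of the set of wall divisors under $D\mapsto -D$, and the resulting sign bookkeeping showing $x$ lies in the chamber of $\kappa$). No gaps; this is a faithful expansion of the cited result.
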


Now let $\Delta_X\subset \Gamma$ be the set described in Proposition \ref{wall}. We define
\[ \Delta_X^{d}:=\left\{\left.D\in \Delta_X\right|\ D^2\ \divides\ 2\divi(D)\right\}, \]
where $\divi(D)$ is the integer $l\geq0$ satisfying $\left\{\left.x\cdot D\right|\ x\in \Gamma \right\}=l\Z$.
We also denote 
\[ \Delta_X^{nd}:=\Delta_X\smallsetminus\Delta_X^{d}. \]
For $u\in  H^{2}(X,\Q)$, $u^2<0$, we define the reflection
$R_u:H^{2}(X,\Q)\rightarrow H^{2}(X,\Q)$ by setting
\[ R_u(x)=x-\frac{2u\cdot x}{u^2}. \]
We make the following additional technical assumption.

\begin{hypothesis}\label{Hypo}
For all $(Y,\psi)\in \mathcal{M}_{\Gamma}^o$ and all $D\in \psi^{-1}(\Delta_X^{d})\cap \Pic (Y)$, one has
$R_D\in\Mon^2(Y)$.
\end{hypothesis}

\begin{remark} \label{rm Hyp 1 is true in some cases}
Hypothesis \ref{Hypo} is verified whenever $\Mon^2(X)=\Ort^+(H^2(X,\Z))$, which holds for $\K3$ surfaces (see for instance \cite[ Proposition 5.5, Chapter 7]{lecture-on-K3}) and for IHS manifolds of $\K3^{[n]}$-type with $n=2$ or $n-1$ equal to a prime power (see \cite[Lemma 4.2]{Markman0}).
\end{remark}


We are finally in position to address whether or not $(X,\omega_X,\sigma_X,\varphi)$ admits a hyperk\"ahler mirror. The idea is to transform $\psi^{-1}(\check{\omega}_X)$ into a K\"ahler class by acting with reflections through wall divisors. It can be seen as a generalization of the proof of the transitivity of the Weyl group on the set of chambers of a $\K3$ surface (see \cite[Corollary 2.9 Chapter 8]{lecture-on-K3}).

\begin{proposition} \label{main}
Let $(X,\omega_X,\sigma_X,\varphi)$ be a marked IHS manifold endowed with a hyperk\"ahler structure which is admissible according to $j$.
Assume that
\begin{itemize}
\item[(1)]
$X$ satisfies Hypothesis \ref{Hypo}, and
\item[(2)]
$\check{\sigma}_{X}\cdot D\neq 0,$ for all $D\in \Delta_X^{nd}$.
\end{itemize}
Then, $(X,\sigma_{X},\omega_{X},\varphi)$ admits a hyperk\"ahler mirror according to $j$ and $\beta$ if and only if 
\[
(\check{\sigma}_{X})^\bot \cap (\check{\omega}_X)^\bot \cap \Delta_X^{d}=\emptyset.
\]
\end{proposition}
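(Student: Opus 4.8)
The plan is to reinterpret the existence of a hyperkähler mirror as a statement about the chamber decomposition of the positive cone cut out by wall divisors, and then to argue by transitivity of the reflection group, exactly as in the proof of the transitivity of the Weyl group on the chambers of a $\K3$ surface. Throughout I fix a marked pair $(Y,\psi)\in\check{\Pp}_X$, which exists by Theorem \ref{period}. Since every element of $\check{\Pp}_X$ has period $\CC\check{\sigma}_X$, the class $\psi^{-1}(\check{\sigma}_X)$ is a holomorphic $2$-form on $Y$, while $\psi^{-1}(\check{\omega}_X)$ lies in the positive cone $\mathcal{C}_Y$ thanks to the choice of $\epsilon$. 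The basic observation is that re-marking by an isometry $w$ that fixes $\check{\sigma}_X$ and is a monodromy operator produces a new element of $\check{\Pp}_X$; hence deciding whether some $(\psi')^{-1}(\check{\omega}_X)$ is Kähler amounts to deciding whether $\check{\omega}_X$ can be moved into the Kähler chamber $\psi(\mathcal{K}_Y)$ by such a $w$.

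First I would pin down the wall divisors of an arbitrary $(Y,\psi)\in\check{\Pp}_X$: by Proposition \ref{wall} they are $\psi^{-1}(\{D\in\Delta_X\mid D\cdot\check{\sigma}_X=0\})$, orthogonality to the period being precisely the $(1,1)$-condition. Here assumption (2) is decisive: it forces every $D\in\Delta_X^{nd}$ to satisfy $D\cdot\check{\sigma}_X\neq0$, so that \emph{all} wall divisors in fact come from $\Delta_X^{d}$. By Hypothesis \ref{Hypo} the associated reflections $R_D$ are then monodromy operators, and since $D\bot\check{\sigma}_X$ each $R_D$ fixes $\check{\sigma}_X$ and preserves the component of the positive cone selected by $\epsilon$. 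Writing $W$ for the group they generate, the hyperplanes $D^{\bot}$ form a locally finite arrangement in $\mathcal{C}_Y$ (as recalled in Remark \ref{rm description of Delta K3} for $\K3$ surfaces, and in general because the wall divisors of a fixed deformation type form a locally finite family), so $\mathcal{C}_Y$ decomposes into open chambers, one of which is the Kähler cone $\psi(\mathcal{K}_Y)$ by Proposition \ref{Kahler}.

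For the direction "$\Rightarrow$" I argue by contraposition. If some $D\in\Delta_X^{d}$ satisfies $D\cdot\check{\sigma}_X=0$ and $D\cdot\check{\omega}_X=0$, then for every $(Y,\psi)\in\check{\Pp}_X$ the class $\psi^{-1}(D)$ is a wall divisor to which $\psi^{-1}(\check{\omega}_X)$ is orthogonal; as the Kähler cone is an open chamber disjoint from every wall, a Kähler class pairs nontrivially with each wall divisor, so $\psi^{-1}(\check{\omega}_X)$ can never be Kähler and no mirror exists. For "$\Leftarrow$", assume $(\check{\sigma}_X)^{\bot}\cap(\check{\omega}_X)^{\bot}\cap\Delta_X^{d}=\emptyset$, so that $\check{\omega}_X$ lies on no wall and hence in the interior of a unique chamber. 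The crux is the transitivity of $W$ on chambers: joining an interior point of this chamber to an interior point of $\psi(\mathcal{K}_Y)$ by a path meeting the locally finite arrangement transversally and crossing one wall at a time, I would compose the reflections in the crossed walls to produce $w\in W$ with $w(\check{\omega}_X)\in\psi(\mathcal{K}_Y)$. Then $w(\check{\omega}_X)$ lies in the same chamber as a genuine Kähler class, so Proposition \ref{Kahler} shows that $\psi^{-1}(w(\check{\omega}_X))$ is Kähler on $Y$; equivalently, putting $\psi':=w^{-1}\circ\psi$, which again lies in $\check{\Pp}_X$ because $w$ fixes $\check{\sigma}_X$ and is a Hodge monodromy, the class $(\psi')^{-1}(\check{\omega}_X)$ is Kähler. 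Thus $(Y,(\psi')^{-1}(\check{\sigma}_X),(\psi')^{-1}(\check{\omega}_X),\psi')$ is the sought hyperkähler mirror, unique by item (\ref{it torelli}) of Theorem \ref{Torelli}.

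The main obstacle will be the transitivity statement itself, the higher-dimensional counterpart of the transitivity of the Weyl group on the chambers of a $\K3$ surface. The delicate points are the local finiteness of the wall arrangement restricted to the $(1,1)$-part determined by $\check{\sigma}_X$, the fact that the generators of $W$ are exactly the wall reflections (so that one wall crossing corresponds to one $R_D$), and the verification that each chamber one reaches is genuinely the Kähler cone of a marked model in $\check{\Pp}_X$ rather than merely an abstract chamber. It is precisely here that Hypothesis \ref{Hypo} keeps the reflections inside $\Mon^2(Y)$, that Proposition \ref{Kahler} certifies Kählerness, and that assumption (2) excludes uncontrolled non-divisorial walls, so that the vanishing condition on $\Delta_X^{d}$ alone is the complete obstruction.
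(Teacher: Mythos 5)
Your proposal is correct and follows essentially the same route as the paper: assumption (2) is used to ensure that every wall divisor of a marked model in $\check{\Pp}_X$ comes from $\Delta_X^{d}$, Hypothesis \ref{Hypo} makes the corresponding reflections Hodge monodromy operators fixing the period, the Weyl-group chamber-transitivity argument (as in the $\K3$ case in Huybrechts' book) moves $\check{\omega}_X$ into the K\"ahler chamber, and one re-marks by the composite reflection; the converse is the same appeal to the definition of wall divisor. The only cosmetic difference is that you let the reflections act on $\Gamma$ and re-mark by $w^{-1}\circ\psi$, whereas the paper works with reflections on $H^2(Y,\Z)$ and the marking $\psi\circ R^{-1}$, which are equivalent under conjugation by $\psi$.
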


\begin{proof}
Let $(Y,\psi)\in \check{\Pp}_{X}$. Let $\kappa$ be a K\"ahler class of $Y$.
Let 
\[
\Delta^{+}:=\left\{\left.D\in \psi^{-1}(\Delta_{X})\cap\Pic(Y)\ \right|\ D\cdot\kappa>0 \right\}.
\]
Since we assume (2), we have
\begin{equation}
\Delta^{+}\subset \psi^{-1}(\Delta_X^{d}).
\label{Delta+}
\end{equation}
If $(\check{\sigma}_{X})^\bot \cap (\check{\omega}_X)^\bot \cap \Delta_X^{d}=\emptyset$
then $\psi^{-1}(\check{\omega}_X)\cdot D\neq0$ for all $D\in \Delta^{+}$.

Using the same technique as \cite[Remark 2.5, Chapter 8]{lecture-on-K3}, we can find $K_{1},...,K_{n}\in \Delta^+$ such that 
$$R_{\left[K_{1}\right]}\circ...\circ R_{\left[K_{n}\right]}(\psi^{-1}(\check{\omega}_X))\cdot D>0,$$
for all $D\in \Delta^+$.
Hence by Proposition \ref{Kahler}, $R_{\left[K_{1}\right]}\circ...\circ R_{\left[K_{n}\right]}(\psi^{-1}(\check{\omega}_X))$ is a K\"ahler class.

Moreover  
by (\ref{Delta+}) and Hypothesis \ref{Hypo}, $R_D\in \Mon^2(Y)$ for all for all $D\in  \Delta^+$.
Furthermore, since $D$ is a divisor, we have $$R_D\in \Mon^2_{Hdg}(Y).$$
Let $R:=R_{\left[K_{1}\right]}\circ...\circ R_{\left[K_{n}\right]}$.
Now, we can consider the marked IHS manifold $(Y, \psi\circ R^{-1})$.
Since all $R_{\left[K_{1}\right]}$ are in $\Mon^2_{Hdg}(Y)$, $R$ is a parallel transport operator and $R^{-1}(\sigma_{Y})=\sigma_{Y}$.
It follows that $(Y, \psi\circ R^{-1})\in \check{\Pp}_X$.

Conversely, assume that $(Y,\psi^{-1}(\check{\sigma}_X), \psi^{-1}(\check{\omega}_X),\psi)$ is the hyperk\"ahler mirror according to $j$ and $\beta$ of $(X,\sigma_{X},\omega_{X},\varphi)$. By definition $\psi^{-1}(\check{\omega}_X)\in \mathcal{K}_Y$. If $(\check{\sigma}_X)^\bot \cap (\check{\omega}_X)^\bot \cap \Delta_X^{d}\neq\emptyset$, then there is $D\in \psi^{-1}(\Delta_X^{d})\cap\Pic Y$ such that $D\cdot \psi^{-1}(\check{\omega}_X)=0$. Hence by definition of wall divisor, $\psi^{-1}(\check{\omega}_X)\notin \mathcal{B}\mathcal{K}_Y$ which is absurd since $\psi^{-1}(\check{\omega}_X)\in \mathcal{K}_Y$. 
\end{proof}

Next, we show that $(X,\omega_X,\sigma_X,\varphi)$ admits a hyperk\"ahler mirror for a large class of elements $\beta\in (M\otimes \RR) \oplus\RR v$ provided that further conditions are met.

\begin{proposition}\label{pr main}
Let $(X,\omega_X,\sigma_X,\varphi)$ be a marked IHS manifold endowed with a hyperk\"ahler structure which is admissible according to $j$. Assume that
\begin{itemize}
\item[(1)]
$X$ satisfies Hypothesis \ref{Hypo},
\item[(2)]
$\Pr(\varphi(\omega_X))\cdot D\neq 0$ or $\Pr(\beta)\cdot D\neq 0$ or $v\cdot D\neq0$ or $v^*\cdot D\neq0$ for all $D\in \Delta_X^{nd}$,
\item[(3)]
$\omega_X^2\in\RR\smallsetminus \Q$,
\item[(4)]
$\Pr (\varphi(\Ree \sigma_X))\cdot D\neq 0 \Rightarrow\Pr(\beta)\cdot D \neq 0$, 
or $\varphi(\sigma_X)\cdot D\neq 0 \Rightarrow \varphi(\Ima \sigma_X)\cdot D\neq 0$, for all $D\in \Delta_X^{d}$.
\end{itemize}
Then, there exists a dense uncountable subset $\Lambda\subset \RR^{*}$ such that $(X,\sigma_{X},\omega_{X},\varphi)$ admits a hyperk\"ahler mirror symmetric according to $j$ and $\lambda\beta$, for all $\lambda\in\Lambda$.
\end{proposition}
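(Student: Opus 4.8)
The plan is to reduce everything to Proposition \ref{main} applied with $\beta$ replaced by $\lambda\beta$, and then to show that the two hypotheses of that proposition together with its output condition hold simultaneously for all but countably many $\lambda\in\RR^*$. Note first that admissibility according to $j$ only constrains $\omega_X$ and $\Ima\sigma_X$, so it is unaffected by rescaling $\beta$; likewise Hypothesis \ref{Hypo} does not involve $\beta$, so assumption (1) guarantees it for every $\lambda$. Performing the substitution $\beta\mapsto\lambda\beta$ in the definitions of $\check\sigma_X$ and $\check\omega_X$, each of the pairings $\Ree(\check\sigma_X\cdot D)$, $\Ima(\check\sigma_X\cdot D)$ and $\check\omega_X\cdot D$ becomes an explicit polynomial in $\lambda$ whose coefficients are built from the pairings of $D$ with $v$, $v^*$, $\pr(\beta)$, $\pr(\varphi(\omega_X))$ and $\pr(\varphi(\Ima\sigma_X))$: concretely, $\Ree(\check\sigma_X\cdot D)$ is quadratic in $\lambda$ with constant term $\frac{1}{2}\omega_X^2(v^*\cdot D)+(v\cdot D)$, whereas $\Ima(\check\sigma_X\cdot D)$ and $\check\omega_X\cdot D$ are affine in $\lambda$.

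For a fixed $D$ the set of bad parameters $\lambda$, those for which the relevant pairings vanish, is the common zero locus of these polynomials, hence either finite or all of $\RR$, the latter occurring exactly when the polynomials vanish identically. Since $\Delta_X\subset\Gamma$ is countable, the strategy is: for each $D\in\Delta_X^{nd}$ (resp.\ $D\in\Delta_X^{d}$) rule out identical vanishing, so that the bad locus is finite, and then set $\Lambda:=\RR^*$ minus the countable union of all these finite loci. This $\Lambda$ is co-countable, hence dense and uncountable, and for every $\lambda\in\Lambda$ condition (2) of Proposition \ref{main} holds (from the $\Delta_X^{nd}$ analysis) while $(\check\sigma_X)^\bot\cap(\check\omega_X)^\bot\cap\Delta_X^{d}=\emptyset$ (from the $\Delta_X^{d}$ analysis); Proposition \ref{main} then produces the mirror according to $j$ and $\lambda\beta$.

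The arithmetic hypothesis (3) enters uniformly: if $\Ree(\check\sigma_X\cdot D)$ vanishes identically, its constant term gives $\frac{1}{2}\omega_X^2(v^*\cdot D)+(v\cdot D)=0$, and since $v\cdot D,v^*\cdot D\in\ZZ$ while $\omega_X^2\notin\QQ$ this forces $v\cdot D=v^*\cdot D=0$, i.e.\ $D\in M$. For $D\in\Delta_X^{nd}$ I would then read off from the identical vanishing that $\pr(\varphi(\omega_X))\cdot D=0$ (constant term of the imaginary part) and $\pr(\beta)\cdot D=0$ (linear term of the real part); together with $v\cdot D=v^*\cdot D=0$ this flatly contradicts assumption (2), so the bad locus is finite in this case. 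The delicate case is $D\in\Delta_X^{d}$, where there is the additional affine polynomial $\check\omega_X\cdot D$ to control: assuming all three vanish identically yields $D\in M$, $\beta\cdot D=0$, $\varphi(\omega_X)\cdot D=0$ and $\varphi(\Ima\sigma_X)\cdot D=0$, and feeding $\pr(\beta)\cdot D=0$ together with $\varphi(\Ima\sigma_X)\cdot D=0$ into assumption (4) forces, via either of its implications, the extra relation $\varphi(\Ree\sigma_X)\cdot D=0$.

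The geometric step that closes the argument, and which I expect to be the main obstacle, is the following. The relations $\varphi(\Ree\sigma_X)\cdot D=\varphi(\Ima\sigma_X)\cdot D=0$ say exactly that $\varphi^{-1}(D)\in H^{1,1}(X,\ZZ)$, so by Proposition \ref{wall} applied to $(X,\varphi)\in\mathcal{M}_\Gamma^o$ the class $\varphi^{-1}(D)$ is a genuine wall divisor of $X$. But $\varphi(\omega_X)\cdot D=0$ places the K\"ahler class $\omega_X$ in $(\varphi^{-1}(D))^\bot$, contradicting the defining property $(\varphi^{-1}(D))^\bot\cap\mathcal{B}\mathcal{K}_X=\emptyset$ of a wall divisor, since $\omega_X\in\mathcal{K}_X\subset\mathcal{B}\mathcal{K}_X$. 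Hence the three polynomials cannot vanish identically and the bad locus for $D\in\Delta_X^{d}$ is finite as well, completing the plan. The role of assumption (4) is thus precisely to supply the one missing orthogonality $\varphi(\Ree\sigma_X)\cdot D=0$ needed to recognise $D$ as a $(1,1)$-class, after which the incompatibility of a K\"ahler class with a wall divisor does the rest.
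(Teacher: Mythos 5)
Your proof is correct, and its logical skeleton is the same as the paper's: reduce to Proposition \ref{main}, use the irrationality of $\omega_X^2$ on the constant term $\frac{1}{2}\omega_X^2(v^*\cdot D)+v\cdot D$ to force $v\cdot D=v^*\cdot D=0$, dispose of $\Delta_X^{nd}$ by assumption (2), and dispose of $\Delta_X^{d}$ by assumption (4) followed by the observation that a wall divisor of $X$ cannot be orthogonal to the K\"ahler class $\omega_X$ (the paper's final step is verbatim this contradiction). Where you genuinely diverge is in how $\Lambda$ is produced. The paper fixes $\Lambda$ first, by demanding that certain explicit expressions in $\lambda$ (see (\ref{caca6}) and (\ref{crote5})) avoid the nonzero integers, together with $\lambda^2\beta^2\neq\omega_X^2$; the mechanism is that a quantity forced by the vanishing equations to be an integer, but arranged never to be a nonzero integer, must vanish, and the argument then splits into the cases $\beta\cdot\omega_X\neq 0$ and $\beta\cdot\omega_X=0$. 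You instead note that for fixed $D$ the pairings $\Ree(\check{\sigma}_X\cdot D)$, $\Ima(\check{\sigma}_X\cdot D)$ and $\check{\omega}_X\cdot D$ are polynomials in $\lambda$, so the bad locus is finite unless they vanish identically, and identical vanishing is precisely what the hypotheses forbid; $\Lambda$ is then co-countable. This buys you three things: no case split on $\beta\cdot\omega_X$, no auxiliary condition $\lambda^2\beta^2\neq\omega_X^2$, and a uniform treatment of $\Delta_X^{d}$ and $\Delta_X^{nd}$ --- note that the paper states its claim (\ref{techniquecor1}) only for $D\in\Delta_X^{nd}$ and then invokes it for $D\in\psi^{-1}(\Delta_X^{d})$ in the proof of (\ref{crote2}), a small imprecision that your version does not incur. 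Both routes yield the same dense uncountable $\Lambda$, so nothing is lost.
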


\begin{proof}
After Proposition \ref{main}, it is clear that one has to show that our assumptions imply that
\begin{equation} \label{crote1}
\check{\sigma}_X\cdot D\neq 0,
\end{equation}
for all $D \in \Delta_X^{nd}$, and
\begin{equation} \label{crote2}
(\check{\sigma}_X)^\bot \cap (\check{\omega}_X)^\bot \cap \Delta_X^{d}=\emptyset.
\end{equation}

We claim that one can scale $\beta$ by an element $\lambda\in\RR^*$ such that 
\begin{equation} \label{techniquecor1}
(\check{\sigma}_X\cdot D=0) \Rightarrow \left\{\begin{aligned}\Pr(\beta)\cdot D=0\\
\Pr(\varphi(\omega_X))\cdot D=0\\
v\cdot D=0\\
v^*\cdot D=0,
\end{aligned}\right.
\end{equation}
for all $D\in \Delta_X^{nd}$. To see that, consider $\Lambda'$ to be the set of $\lambda\in \RR^{*}$ such that $\lambda^2\beta^2\neq \omega_X^2 $ and for all $D\in\varphi^{-1}(\Delta_X^{nd})$:
\begin{equation}
\left\{\begin{aligned}
& D\cdot\Pr(\lambda\beta)\in \RR\smallsetminus \mathbb{Z}^{*}, \\
& D\cdot\left[\Pr(\lambda\beta)-\frac{1}{2}(\lambda^2\beta^2+\omega_X^2)v\right]\in \RR\smallsetminus \mathbb{Z}^{*},\\
& \frac{2\Pr(\lambda\beta)\cdot D}{\omega_X^2-\lambda^2\beta^2}\in \RR\smallsetminus \mathbb{Z}^{*}.
\end{aligned}
\right.
\label{caca6}
\end{equation}
Since $\omega_X^2$ is irrational, when $D\cdot\Pr(\beta)$ and $\beta^2=0$, $$D\cdot\left[\Pr(\beta)-\frac{1}{2}(\beta^2+\omega_X^2)v\right]\in \RR\smallsetminus \mathbb{Z}^{*}$$
Furthermore, $D\cdot \Pr(\beta)$, $\Pr(\beta)\cdot D-\frac{1}{2}\beta^2v\cdot D$ and $\frac{2\Pr(\beta)\cdot D}{\omega_X^2-\beta^2}$ take only a countable number of values when $D$ varies, hence
$\Lambda'$ is a dense uncountable subset of $\RR^{*}$. We then define the set $\Lambda$ as follows:
\[
\Lambda:=\Lambda', ~~{\rm if} ~~ \beta\cdot \omega_X = 0, ~~{\rm or}
\]
\begin{equation} \label{crote5}
\Lambda:=\left\{\left.\lambda\in \Lambda'\ \right|\ \forall\ D\in\varphi^{-1}(\Delta_X^{nd}),\ \frac{\Pr(D)\cdot\Pr(\varphi(\omega_X))}{\lambda\beta\cdot\omega_X}\in \RR\smallsetminus \mathbb{Z}^{*} \right\} ~~{\rm otherwise.}
\end{equation}
 Since $\frac{\Pr(D)\cdot\Pr(\omega_X)}{\beta\cdot\omega_X}$ takes only a countable number of values when $D$ varies,
$\Lambda$ is a dense uncountable subset of $\RR^{*}$.

We address now the proof of the claim that, after scaling $\beta$ by $\lambda\in \Lambda$, one gets (\ref{techniquecor1}). Then we will have proved (\ref{crote1}) under the hypothesis of the proposition.

Let $D\in \Delta_X^{nd}$ such that $D\cdot \check{\sigma}_X=0$.
Then, $\Ima \check{\sigma}_X\cdot D=0$ implies that
\begin{equation}
\Pr(\varphi(\omega_X))\cdot D+(\varphi(\omega_X)\cdot \lambda\beta)v\cdot D=0.
\label{techniquecor2}
\end{equation}
If $\omega_X\cdot \beta\neq 0$ then by (\ref{crote5}), we have $\Pr(\varphi(\omega_X))\cdot D=0$ and $D\cdot v=0$.
Moreover, we have $\Ree \check{\sigma}_X\cdot D=0$, so
$$\Pr(\lambda\beta)\cdot D+D\cdot v^*=0.$$
Again, by the choice of $\lambda$ made in the first line of (\ref{caca6}), we get $\Pr(\lambda\beta)\cdot D=0$ and $D\cdot v^*=0$.

In the case where $\varphi(\omega_X)\cdot \beta=0$, then $\Ima \check{\sigma}_X\cdot D=0$ only implies $\Pr(\omega_X)\cdot D=0$.
Since we also have $\Ree \check{\sigma}_X\cdot D=0$,
$$\Pr(\lambda\beta)\cdot D-\frac{1}{2}(\lambda^2\beta^2-\omega_X^2)v\cdot D+v^*\cdot D=0.$$
Since $\Pr(\lambda\beta)\cdot D-\frac{1}{2}(\lambda^2\beta^2-\omega_X^2)v\cdot D$ is not integral by the second line of (\ref{caca6}), one has
$$\Pr(\lambda\beta)\cdot D-\frac{1}{2}(\lambda^2\beta^2-\omega_X^2)v\cdot D=0\ \et\ v^*\cdot D=0.$$
Furthermore, by the third line of (\ref{caca6}), $\frac{2\Pr(\lambda\beta)\cdot D}{\omega_X^2-\lambda^2\beta^2}$ is not integral. 
Hence $$\Pr(\lambda\beta)\cdot D=0\ \et\ v\cdot D=0.$$
which completes the proof of the claim in equation \eqref{techniquecor1}; we suppose from now on, without any loss of generality, that it is satisfied.

Now we show that (\ref{crote2}) holds. Take $D\in\Pic (Y)\cap \psi^{-1}(\Delta_X^{d})$ and assume that $\psi^{-1}(\check{\omega}_X)\cdot D=0$. We have $D\in \Pic(Y)$, hence $D\cdot\psi^{-1}(\check{\sigma}_X)=0$. It follows by (\ref{techniquecor1}) that
\begin{equation} \label{c1}
\psi(D)\cdot\Pr(\varphi(\omega_X))=0,\ \psi(D)\cdot v^{*}=0,\ \psi(D)\cdot\Pr(\beta)=0\ \et\ \psi(D)\cdot v=0.
\end{equation}
Moreover $\psi^{-1}(\check{\omega}_X)\cdot D=0$ implies that 
\begin{equation} \label{c2}
\Ima \sigma_X\cdot \varphi^{-1}(\psi(D))=0.
\end{equation}

Assumption (4) on the hypothesis, together with \eqref{c1} and \eqref{c2}, implies that 
$\sigma_X\cdot \varphi^{-1}(\psi(D))=0$. By (\ref{c1}), we also have $\varphi^{-1}(\psi(D))\cdot\omega_X=0$. This is impossible by definition of wall divisors, so (\ref{crote2}) holds, and that completes the proof. 
\end{proof}

If the Picard group $\Pic Y$ of the mirror manifold $Y$ is empty, then, by \cite[Corollary 5.7]{Huybrechts2}, all the classes of the positive cone are K\"ahler classes. In particular, the class $\psi^{-1}(\check{\omega}_X)$ will be K\"ahler, and Hypothesis (1) and condition (4) of Proposition \ref{pr main} become redundant. Moreover, in order to guarantee that $\Pic Y=\emptyset$, we only have to exchange $\Delta_X^{nd}$ for the entire $\Gamma$. We obtain:

\begin{corollary}
Let $(X,\omega_X,\sigma_X,\varphi)$ be a marked IHS manifold endowed with a hyperk\"ahler structure which is admissible according to $j$.
Assume that
\begin{itemize}
\item[(1)]
$\Pr(\varphi(\omega_X))\cdot D\neq 0\ \ou\ \Pr(\beta)\cdot D\neq 0\ \ou\ v\cdot D\neq 0\ \ou\ v^*\cdot D\neq 0$ for all $D\in \Gamma$, and
\item[(2)]
$\omega_X^2\in\RR\smallsetminus \Q$.
\end{itemize}

Then there exists a dense uncountable subset $\Lambda\subset \RR^{*}$ such that $(X,\sigma_{X},\omega_{X},\varphi)$ admits a hyperk\"ahler mirror according to $j$ and $\lambda\beta$, for all $\lambda\in\Lambda$.
\end{corollary}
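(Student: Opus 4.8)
The plan is to show that, for a generic rescaling $\lambda\beta$ of $\beta$, the candidate period $\check{\sigma}_X$ of the mirror is orthogonal to no nonzero integral class, so that any mirror $Y$ with $(Y,\psi)\in\check{\Pp}_X$ has trivial Picard group $\Pic(Y)=0$. Once this is established, the delicate part of Proposition \ref{pr main}---moving $\psi^{-1}(\check{\omega}_X)$ inside the K\"ahler cone by reflecting through wall divisors, which required Hypothesis \ref{Hypo} and condition (4)---becomes unnecessary, because an IHS manifold with no algebraic classes has no wall divisors and its whole positive cone is K\"ahler by \cite[Corollary 5.7]{Huybrechts2}.

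First I would reproduce verbatim the construction of the dense uncountable set $\Lambda\subset\RR^*$ from the proof of Proposition \ref{pr main}, replacing $\Delta_X^{nd}$ by the entire lattice $\Gamma$ throughout \eqref{caca6} and \eqref{crote5}. This replacement costs nothing in the counting argument: $\Gamma$ is countable, so the expressions $D\cdot\Pr(\lambda\beta)$, $\Pr(\lambda\beta)\cdot D-\tfrac12(\lambda^2\beta^2+\omega_X^2)v\cdot D$, and the like run through only countably many values as $D$ ranges over $\Gamma$, and the irrationality of $\omega_X^2$ (condition (2)) still forces the forbidden values of $\lambda$ to form a countable subset of $\RR^*$. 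Hence $\Lambda$ remains dense and uncountable, and for each $\lambda\in\Lambda$ the identical computation establishing \eqref{techniquecor1} now yields, for every $D\in\Gamma$,
\[
\check{\sigma}_X\cdot D=0\;\Longrightarrow\;\Pr(\varphi(\omega_X))\cdot D=\Pr(\beta)\cdot D=v\cdot D=v^*\cdot D=0.
\]

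Next I would invoke condition (1) of the corollary, which guarantees that for every $D\in\Gamma$ at least one of the four products above is nonzero. Contraposing the displayed implication therefore gives $\check{\sigma}_X\cdot D\neq 0$ for every nonzero $D\in\Gamma$. For any $(Y,\psi)\in\check{\Pp}_X$ one has $\psi(H^{2,0}(Y))=\CC\check{\sigma}_X$, so an integral class $D\in H^2(Y,\ZZ)$ lies in $\Pic(Y)=H^{1,1}(Y)\cap H^2(Y,\ZZ)$ precisely when $\psi(D)\cdot\check{\sigma}_X=0$; the previous line forces $\psi(D)=0$, i.e. $\Pic(Y)=0$.

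Finally, with $\Pic(Y)$ trivial, \cite[Corollary 5.7]{Huybrechts2} shows that the positive cone of $Y$ equals its K\"ahler cone. Since $\epsilon$ is chosen in the definition of $\check{\omega}_X$ precisely so that $\psi^{-1}(\check{\omega}_X)$ lies in the positive cone, it is automatically a K\"ahler class, which is exactly the defining condition for $(X,\sigma_X,\omega_X,\varphi)$ to admit a hyperk\"ahler mirror according to $j$ and $\lambda\beta$. The only genuinely new ingredient compared with Proposition \ref{pr main}, and the point I would check most carefully, is that strengthening the non-orthogonality hypothesis from $\Delta_X^{nd}$ to all of $\Gamma$ is exactly what annihilates $\Pic(Y)$; everything else is a transcription of the earlier argument.
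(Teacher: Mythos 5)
Your proposal is correct and follows essentially the same route as the paper, which derives this corollary from Proposition \ref{pr main} by observing that replacing $\Delta_X^{nd}$ with all of $\Gamma$ forces $\check{\sigma}_X\cdot D\neq 0$ for every nonzero $D\in\Gamma$, hence $\Pic(Y)=0$, so that by \cite[Corollary 5.7]{Huybrechts2} the positive cone of $Y$ is its K\"ahler cone and Hypothesis \ref{Hypo} and condition (4) become redundant. The only cosmetic slip is that at the end of your second paragraph the conclusion should be $D=0$ (since $\psi$ is an isometry) rather than $\psi(D)=0$, but this does not affect the argument.
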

\begin{remark}
Except for the case when $\RR \Ree \sigma_X \oplus \RR \Ima\sigma_X$ contains an integral class, the condition in (1) on $\omega_X$ is generic. Indeed, the K\"ahler cone $\mathcal{K}_X$ is a full dimensional subset of $H^2(X,\RR)\cap\left\langle\Ree \sigma_X,\Ima \sigma_X\right\rangle^\bot$.
\end{remark}

For $\K3$ surfaces, Hypothesis \ref{Hypo} is true by Remark \ref{rm Hyp 1 is true in some cases} and by Remark \ref{rm description of Delta K3} we have a complete description of $\Delta_X$,
\[
\Delta_X^{d}=\left\{\left.D\in \Pic X\right|\ D^2=-2 \right\}\ \et\ \Delta_X^{nd}=\emptyset.
\]
So, from Proposition \ref{pr main}, we obtain the following consequence.

\begin{corollary} \label{K3case}
Let $(X,\omega_X,\sigma_X,\varphi)$ be a marked $\K3$ surface endowed with a hyperk\"ahler structure which is admissible according to $j$. Assume that 
\begin{itemize}
\item[(1)]
$\left(\Pr (\Ree \sigma_X)\cdot D\neq 0\right) \Rightarrow \left(\varphi^{-1}(\Pr(\beta))\cdot D \neq 0\right)$ or $\sigma_X\cdot D\neq 0 \Rightarrow \Ima \sigma_X\cdot D\neq 0$ for all $D\in H^{2}(X,\Z)$ such that $D^{2}=-2$, and
\item[(2)]
$\omega_X^2\in\RR\smallsetminus \Q$.
\end{itemize}
Then, there exists a dense uncountable subset $\Lambda\subset \RR^{*}$ such that $(X,\sigma_{X},\omega_{X},\varphi)$ admits a hyperk\"ahler mirror according to $j$ and $\lambda\beta$, for all $\lambda\in\Lambda$.
\end{corollary}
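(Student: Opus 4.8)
The plan is to obtain the statement as a direct specialization of Proposition \ref{pr main} to the K3 setting, so that the whole argument reduces to verifying that the four hypotheses of that proposition follow from conditions (1) and (2) of the corollary. Accordingly, I would first assemble the structural facts about a K3 surface that were recorded immediately before the statement. Since $\Mon^2(X) = \Ort^+(H^2(X,\Z))$ for K3 surfaces, Remark \ref{rm Hyp 1 is true in some cases} shows that Hypothesis \ref{Hypo} is automatically satisfied, which supplies hypothesis (1) of Proposition \ref{pr main}. By Remark \ref{rm description of Delta K3}, the set $\Delta_X$ is exactly the collection of $(-2)$-classes of $\Gamma$.

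The key observation is that $\Delta_X^{nd} = \emptyset$, and I would justify this from the unimodularity of $\Gamma = U^3 \oplus E_8(-1)^2$. For a primitive $D$ with $D^2 = -2$, the divisibility $\divi(D)$ equals $1$, because the map $x \mapsto x \cdot D$ surjects onto $\Z$ for any primitive vector of a unimodular lattice. Hence $D^2 = -2$ divides $2\divi(D) = 2$, so every wall class lies in $\Delta_X^d$ and $\Delta_X^{nd}$ is empty. With $\Delta_X^{nd} = \emptyset$, hypothesis (2) of Proposition \ref{pr main} holds vacuously, while hypothesis (3) there is literally condition (2) of the corollary.

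It then remains to match hypothesis (4) of Proposition \ref{pr main}, which is phrased over $\Gamma$, with condition (1) of the corollary, which is phrased over $H^2(X,\Z)$. Because the marking $\varphi$ is an isometry carrying $\{D \in \Pic X : D^2 = -2\}$ onto $\Delta_X^d$ and preserving all the relevant pairings, transporting the quantities $\Pr(\varphi(\Ree \sigma_X))$, $\varphi(\sigma_X)$, $\varphi(\Ima \sigma_X)$ and $\Pr(\beta)$ through $\varphi^{-1}$ turns hypothesis (4) into condition (1) verbatim, the quantification over $D \in \Delta_X^d$ becoming the quantification over $D \in H^2(X,\Z)$ with $D^2 = -2$. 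Once this dictionary is in place, Proposition \ref{pr main} applies and yields the required dense uncountable subset $\Lambda \subset \RR^*$.

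I do not expect a serious obstacle here, since the corollary is by design a translation of Proposition \ref{pr main}. The only two points demanding genuine care are the unimodularity computation establishing $\Delta_X^{nd} = \emptyset$ (so that condition (2) of Proposition \ref{pr main} is free and the quantifier in hypothesis (4) ranges over all $(-2)$-classes) and the careful bookkeeping of moving each condition across the marking $\varphi$; both are routine once the isometry property of $\varphi$ is used systematically.
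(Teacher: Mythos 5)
Your proposal is correct and follows essentially the same route as the paper: the paper likewise deduces the corollary directly from Proposition \ref{pr main} by noting that Hypothesis \ref{Hypo} holds for $\K3$ surfaces (Remark \ref{rm Hyp 1 is true in some cases}), that $\Delta_X$ consists of the $(-2)$-classes (Remark \ref{rm description of Delta K3}) so that $\Delta_X^{d}$ is all of them and $\Delta_X^{nd}=\emptyset$, and then matching the remaining hypotheses. Your explicit justification that $\Delta_X^{nd}=\emptyset$ via primitivity and unimodularity is a small but accurate elaboration of a step the paper only asserts.
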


\subsection{Transforming brane involutions under mirror symmetry}
\label{sc branes and mirror symmetry}

Let $(X,\sigma_{X},$ $ \omega_{X}, \varphi)$ be a marked IHS manifold endowed with a hyperk\"ahler structure which admits a hyperk\"ahler mirror $(Y,\sigma_{Y},\omega_{Y},\psi)$ according to some $j$ and $\beta$. Let $i$ be a brane involution on $(X,\sigma_X,\omega_X,\varphi)$. There are two different ways to transform $i$ under mirror symmetry; these will be called the \emph{direct mirror transform} and the \emph{indirect mirror transform}.

We define the \emph{direct mirror involution in cohomology} to be the involution on $H^2(Y,\ZZ)$ given by
\begin{equation}
\check{\imath}_{dir}^{\, *}:=\psi^{-1}\circ \varphi \circ i^{*} \circ \varphi^{-1}\circ \psi.
\label{direct}
\end{equation}
Now, assume that $\check{\imath}_{dir}^{\, *}$ can be extended to an involution $\check{\imath}_{dir}$ on $Y$. We call $\check{\imath}_{dir}$ the \emph{direct mirror transform} of $i$.

If $i$ is not a $\BBB$-brane involution, we have another construction analogous to the one considered by Dolgachev \cite{Dolgachev} and Camere \cite{Camere} for holomorphic anti-symplectic involutions. Recall from Remark \ref{rm type in terms T and S} that $T := H^{2}(X,\Z)^{i^{*}}$ and $S:=T^\bot$, where $i^*$ acts trivially on $T$ and inverts $S$. Assume $j:U\hookrightarrow \varphi(S)$, and we denote $M:=j(U)^\bot$. Set
\begin{equation}
\check{\imath}_{ind}^{\, *}|_{\psi^{-1}(j(U)\oplus \varphi(T))}=-\id\ \et\ \check{\imath}_{ind}^{\, *}|_{\psi^{-1}(\varphi(M)\cap \varphi(S))}=\id.
\label{indirect}
\end{equation}
By Corollary 1.5.2 of \cite{Lattice}, $\check{\imath}_{ind}^{\, *}$ extends to an involution on $H^{2}(Y,\Z)$ that we call the \emph{indirect mirror involution in cohomology}. Assume that $\check{\imath}_{ind}^{\,*}$ can be extended to an involution $\check{\imath}_{ind}$ on $Y$. We call $\check{\imath}_{ind}$ the \emph{indirect mirror transform} of $i$. 

\begin{remark}
For a $\BBB$-involution $i$ the invariant lattice $\varphi(T)$ has signature $(3, t-3)$, so the anti-invariant lattice $\varphi(S)$ is negative definite and there is no possible embedding $U \hookrightarrow S$. Therefore, we can not construct $\check{\imath}_{ind}^{\, *}$ from a $\BBB$-involution.
\end{remark}

We want to study when $\check{\imath}_{dir}^{\, *}$ or $\check{\imath}_{ind}^{\, *}$ extend to a brane involution on $Y$. In both cases, we will need another assumption, which we now describe. Let 
\[
\nu: \Aut (X)\rightarrow H^{2}(X,\Z)
\]
the natural morphism; Hassett and Tschinkel have shown in \cite[Theorem 2.1]{Kummer} that $\Ker \nu$ is a deformation invariant. We now make the following hypothesis on the deformation class of $X$.

\begin{hypothesis}\label{injective}
The map $\nu$ is injective.
\end{hypothesis}

\begin{remark} \label{rm Hyp 2 holds in K3 and other IHSM}
This hypothesis holds in the case of $\K3$ surfaces due to the Strong Torelli theorem \cite{Looijenga}, manifolds of $\K3^{[n]}$-type (see \cite[Lemma 1.2]{K3Mong}) and deformations of O'Grady's manifolds of dimension 10 (see \cite{OGMong}).
\end{remark}

In the following lemma we study when an involution in cohomology comes from an involution on the manifold.

\begin{lemma}\label{keylemma}
Let $(X,\sigma_{X},\omega_{X},\varphi)$ be a marked IHS manifold endowed with a hyperk\"ahler structure which satisfies Hypothesis \ref{injective}. Let $\iota \in \Mon^2(X)$ be an involution on $H^2(X,\ZZ)$. Let $V:=\Vect_{\RR}(\omega_{X},\Ree \sigma_{X}, \Ima \sigma_{X})$. Assume that $\iota\otimes\RR$ can be restricted to $V$ and $\iota \otimes\RR|_{V}$ is the identity or a reflection through the rays generated by $\omega_{X}$, $\Ree \sigma_{X}$ or $\Ima \sigma_{X}$.
Then $\iota^*$ can be extended to a brane involution on $X$.
\end{lemma}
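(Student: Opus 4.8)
The plan is to realize $\iota$ geometrically by applying the Torelli theorem in a suitably chosen complex structure of the twistor family, and then to upgrade the resulting biholomorphism to an isometry of the hyperk\"ahler metric. Write $\omega_1 := \omega_X$, $\omega_2 := \Ree \sigma_X$, $\omega_3 := \Ima \sigma_X$ for the three K\"ahler forms of the hyperk\"ahler structure, so that $V = \Vect_\RR(\omega_1,\omega_2,\omega_3)$ and $\sigma_\ell = \omega_m + i\omega_n$ is the holomorphic two-form of $(X,J_\ell)$ for $(\ell,m,n)$ a cyclic permutation of $(1,2,3)$. By hypothesis $\iota\otimes\RR|_V$ is either the identity or the $\pi$-rotation about one of the axes $\RR\omega_1$, $\RR\omega_2$, $\RR\omega_3$; in every case there is an index $k$ with $\iota(\omega_k)=\omega_k$, while $\iota$ either fixes or negates the remaining two forms $\omega_m,\omega_n$. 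Consequently $\iota(\sigma_k)=\pm\sigma_k\in\CC\sigma_k$, so $\iota$ preserves the line $H^{2,0}$ of $(X,J_k)$ and is therefore an isometry of the integral Hodge structure of $(X,J_k)$.

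First I would apply Markman's Torelli theorem [Theorem \ref{Torelli}(\ref{it torelli})] to $Y=X=(X,J_k)$. The manifold $(X,J_k)$ lies in the twistor family of $X$, hence is deformation equivalent to $X$, and the notions of monodromy operator and of parallel-transport operator are invariants of the deformation class; thus $\iota\in\Mon^2(X)$ is a parallel-transport operator for $(X,J_k)$ as well. Since $\iota$ is moreover a Hodge isometry fixing the K\"ahler class $[\omega_k]$, Theorem \ref{Torelli}(\ref{it torelli}) provides a biholomorphism $\widetilde\iota$ of $(X,J_k)$ with $\widetilde\iota_*=\iota$. Because $(\widetilde\iota^{\,2})_*=\iota^2=\id$ on $H^2(X,\ZZ)$ and $X$ satisfies Hypothesis \ref{injective} (so that $\nu$ is injective), the automorphism $\widetilde\iota^{\,2}$ must be the identity; hence $\widetilde\iota$ is an involution.

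Next I would promote $\widetilde\iota$ to an isometry of the hyperk\"ahler metric $g$. As $\widetilde\iota$ is a biholomorphism of $(X,J_k)$, the pulled-back metric $\widetilde\iota^{\,*}g$ is again a Ricci-flat K\"ahler metric for $J_k$, and its K\"ahler class is $\iota([\omega_k])=[\omega_k]$. By the uniqueness part of Yau's theorem, the Ricci-flat K\"ahler metric in a fixed K\"ahler class is unique, so $\widetilde\iota^{\,*}g=g$ and $\widetilde\iota$ is an isometry. An isometry of $g$ acts on the twistor sphere of parallel complex structures, and under the correspondence $J\leftrightarrow\omega_J=g(\cdot,J\cdot)$ this action is intertwined with the action $\iota|_V$ on $V$. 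Since $\iota$ sends each $\omega_\ell$ to $\pm\omega_\ell$, the conjugation $d\widetilde\iota\circ J_\ell\circ d\widetilde\iota^{-1}$ equals $\pm J_\ell$ with the matching sign; that is, $\widetilde\iota$ commutes or anticommutes with each $J_\ell$. By Proposition \ref{pr branes as fixed points}, $\widetilde\iota$ is then a brane involution, and the commuting/anticommuting pattern is exactly the one prescribing the type read off from $\iota|_V$ through Remark \ref{rm type in terms T and S}.

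The routine sign bookkeeping (choosing $k$, checking which $\sigma_k$ is preserved, and matching the final type) is immediate. The genuinely delicate steps are the last two: the output of Torelli is a priori only a biholomorphism of the single complex structure $J_k$, and one must argue that it respects the entire hyperk\"ahler package. I expect this to be the main obstacle, and it is resolved precisely by Yau uniqueness (to obtain the isometry) together with the twistor correspondence (to control the interaction with the remaining $J_\ell$). Hypothesis \ref{injective} enters only to guarantee that the lift is an honest involution, rather than merely an automorphism squaring to the identity in cohomology.
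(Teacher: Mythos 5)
Your proof is correct and follows essentially the same route as the paper's: both reduce to Markman's Torelli theorem applied in the rotated complex structure of the twistor family for which $\iota$ fixes the K\"ahler class, and use Hypothesis \ref{injective} to upgrade the resulting automorphism to an honest involution. The only difference is that you make explicit the Yau-uniqueness argument showing the Torelli biholomorphism is a $g$-isometry compatible with the whole twistor sphere, a step the paper leaves implicit.
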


\begin{proof}
If $\iota \otimes \RR|_{V}$ is either the identity or the reflection through the ray generated by $\omega_{X}$, the claim is a direct consequence of the Torelli Theorem (see Theorem \ref{Torelli}).

Now let us assume that $\iota \otimes \RR|_{V}$ is the reflection through the ray generated by $\Ree \sigma_{X}$, the proof is analogous for the reflection through $\Ima \sigma_{X}$. Let $g$ and $J_1,J_2,J_3$ be a metric and three complex structures on $X$ such that $\omega_{X}=g(\cdot,J_1(\cdot))$, $\Ree \sigma_{X}=g(\cdot,J_2(\cdot))$ and $\Ima \sigma_{X}=g(\cdot,J_3(\cdot))$. Considering the twistor family, we provide a hyperk\"ahler rotation, $r:X\rightarrow Y$, such that $r^*(J_1)=J_2$, $r^*(J_3)=-J_3$ and $r^*$ is a parallel transport operator. Then $(r^{*})^{-1}\circ \iota \circ r^{*}$ is an involution on $H^{2}(Y,\ZZ)$ which fixes a K\"ahler class of $Y$ and sends the holomorphic 2-form $\sigma_{Y}$ of $Y$ to $- \sigma_{Y}$. Then, by Theorem \ref{Torelli}, $(r^{*})^{-1}\circ \iota \circ r^{*}$ can be extended to a homomorphic involution $i$ on $Y$ such that $i^{*}=(r^{*})^{-1}\circ \iota \circ r^{*}$.
It follows that $i':=r^{-1}\circ i\circ r$ is a brane involution on $X$ such that $(i')^{*}=\iota$.
\end{proof}

We can now study the behaviour of brane involutions under mirror symmetry.

\begin{theorem} \label{tm mirror transform of branes}
Let $(X,\sigma_{X},\omega_{X},\varphi)$ be a marked IHS manifold endowed with a hyperk\"ahler structure satisfying Hypothesis \ref{injective}, and let $i : X \to X$ be a brane involution on it of the type expressed in the first column of the table. Suppose that $(Y,\sigma_{Y},\omega_{Y},\psi)$ is the hyperk\"ahler mirror of $(X,\sigma_{X},\omega_{X},\varphi)$ according to $j$ and $\beta$ satisfying the conditions of the second column of the table hold. 

Then the direct mirror involution $\check{\imath}_{dir}^{\, *}$ in cohomology extends to a brane involution $\check{\imath}$ on the mirror $(Y, \sigma_{Y}, \omega_{Y}, \psi)$ of the type indicated in the third column of the table.

If the indirect mirror involution in cohomology satisfies $\check{\imath}_{ind}^{\, *}\in \Mon^2(Y)$, then it extends to a brane involution $\check{\imath}$ on $(Y,\sigma_{Y},\omega_{Y},\psi)$ of the type indicated in the third column of the table.
$$\begin{tabular}{|c|c|c|c|}
\hline
Type of $i$ & Conditions on $j$ and $\beta$ & Type of $\check{\imath}$ & Construction \\
\hline
$\BBB$ &  $j:U\hookrightarrow \varphi(T)$, $\beta\in \varphi(T)\otimes\RR$ & $\BBB$ & Direct \\
\hline
\multirow{2}{1.2cm}{$\BAA$} &  $j:U\hookrightarrow \varphi(S)$, $\beta\in \varphi(S)\otimes\RR$ & $\AAB$& Direct\\
\cline{2-4}
& $j:U\hookrightarrow \varphi(S)$, $\beta\in \varphi(T)\otimes\RR$ & $\BAA$ & Indirect\\
\hline
$\ABA$ &  $j:U\hookrightarrow \varphi(T)$, $\beta\in \varphi(T)\otimes\RR$ & $\ABA$ & Direct\\
\hline
\multirow{2}{1.2cm}{$\AAB$} & $j:U\hookrightarrow \varphi(S)$, $\beta\in \varphi(S)\otimes\RR$ & $\BAA$ & Direct\\
\cline{2-4}
& $j:U\hookrightarrow \varphi(S)$, $\beta\in \varphi(T)\otimes\RR$ & $\AAB$ & Indirect\\
\hline
\end{tabular}$$
Finally, in the $\ABA$ case it is impossible to extend the indirect mirror involution in cohomology to an involution on the mirror.
\end{theorem}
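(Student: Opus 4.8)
The plan is to reduce every case to a computation on the period lattice $\Gamma$. Transporting cohomology back to $\Gamma$ via the marking $\psi$, the direct involution $\check{\imath}_{dir}^{\,*}$ becomes the isometric involution $I:=\varphi\circ i^{*}\circ\varphi^{-1}$ of $\Gamma$, which by Remark \ref{rm type in terms T and S} is $+\id$ on $\varphi(T)\otimes\RR$ and $-\id$ on $\varphi(S)\otimes\RR$ (an orthogonal splitting, as $S=T^{\bot}$); the indirect involution $\check{\imath}_{ind}^{\,*}$ becomes the involution $J$ of $\Gamma$ that is $-\id$ on $\varphi(T)\oplus j(U)$ and $+\id$ on $M\cap\varphi(S)$, where $M=j(U)^{\bot}$. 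In both cases the transported involution preserves $j(U)$ (pointwise when $j(U)\subset\varphi(T)$, as a subspace when $j(U)\subset\varphi(S)$), hence preserves $M$ and so commutes with the projection $\pr$; this is what makes the whole computation go through. I would then separate real and imaginary parts of the defining formula for $\check{\sigma}_{X}$ to record
\[
\Ree\check{\sigma}_X=\pr(\beta)-\tfrac12\bigl(\beta^2-\varphi(\omega_X)^2\bigr)v^*+v,\qquad \Ima\check{\sigma}_X=\pr(\varphi(\omega_X))-(\beta\cdot\varphi(\omega_X))\,v^*,
\]
which, together with the given $\check{\omega}_X$, exhibit $\omega_Y=\psi^{-1}(\check\omega_X)$, $\Ree\sigma_Y$ and $\Ima\sigma_Y$ built out of $\pr(\beta)$, $\pr(\varphi(\omega_X))$, $\pr(\varphi(\Ima\sigma_X))$, $v$ and $v^{*}$.

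Next I would run the six rows one at a time. For each, the type of $i$ (via Remark \ref{rm type in terms T and S}) tells me in which of $\varphi(T)\otimes\RR$, $\varphi(S)\otimes\RR$ the vectors $\varphi(\omega_X),\varphi(\Ree\sigma_X),\varphi(\Ima\sigma_X)$ lie, and the table prescribes the positions of $j(U)$ and $\beta$. Using the orthogonality $\varphi(T)\bot\varphi(S)$ (which makes the cross pairings such as $\beta\cdot\varphi(\omega_X)$ vanish) and the commutation of $I$, resp. $J$, with $\pr$, a direct check shows that each of $\check{\omega}_X,\Ree\check{\sigma}_X,\Ima\check{\sigma}_X$ is sent to $\pm$ itself. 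The outcome is that $\check{\imath}_{dir}^{\,*}\otimes\RR$, resp. $\check{\imath}_{ind}^{\,*}\otimes\RR$, restricted to $V_Y:=\Vect_\RR(\omega_Y,\Ree\sigma_Y,\Ima\sigma_Y)$ is either the identity or the reflection through exactly one of the rays $\RR\omega_Y$, $\RR\Ree\sigma_Y$, $\RR\Ima\sigma_Y$; reading off the dictionary identity $\mapsto\BBB$, reflection through $\omega_Y\mapsto\BAA$, through $\Ree\sigma_Y\mapsto\ABA$, through $\Ima\sigma_Y\mapsto\AAB$ reproduces the third column of the table exactly.

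To promote these cohomology involutions to brane involutions on $Y$ I would invoke Lemma \ref{keylemma}, whose hypotheses are precisely membership in $\Mon^2(Y)$ plus the reflection/identity behaviour on $V_Y$ just established; note Hypothesis \ref{injective} transfers from $X$ to $Y$ because $\Ker\nu$ is a deformation invariant. For the indirect construction the condition $\check{\imath}_{ind}^{\,*}\in\Mon^2(Y)$ is assumed in the statement, so nothing more is needed. For the direct construction I would write $\check{\imath}_{dir}^{\,*}=g\circ i^{*}\circ g^{-1}$ with $g:=\psi^{-1}\circ\varphi$, observe that $g$ is a parallel transport operator because $(X,\varphi)$ and $(Y,\psi)$ lie in the same connected component $\mathcal{M}_\Gamma^o$, and check $i^{*}\in\Mon^2(X)$: this is immediate when $i$ is holomorphic (an automorphism acts by a monodromy operator), and when $i$ is antiholomorphic it follows, as in the proof of Lemma \ref{keylemma}, by conjugating a holomorphic involution by the parallel transport operator of a twistor rotation. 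Closure of the monodromy group under conjugation by parallel transport operators then yields $\check{\imath}_{dir}^{\,*}\in\Mon^2(Y)$, and Lemma \ref{keylemma} finishes each of the six entries.

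For the final impossibility I would run the same computation in the $\ABA$ case with $j:U\hookrightarrow\varphi(S)$ and $\beta\in\varphi(T)\otimes\RR$: here $\varphi(\Ree\sigma_X)\in\varphi(T)$ while $\varphi(\omega_X),\varphi(\Ima\sigma_X)\in\varphi(S)$, and one finds that $J$ fixes both $\omega_Y$ and $\Ima\sigma_Y$ but sends $\Ree\sigma_Y\mapsto-\Ree\sigma_Y$, i.e. $\sigma_Y\mapsto-\overline{\sigma}_Y$ while $\omega_Y\mapsto\omega_Y$. This restricts to $V_Y$ with determinant $-1$, whereas every brane involution restricts with determinant $+1$ (each of $\BBB,\BAA,\ABA,\AAB$ negates an even number of the three vectors), so no brane involution realizes it; more sharply, a holomorphic realizing involution would preserve $H^{2,0}(Y)=\CC\sigma_Y$, contradicting $-\overline{\sigma}_Y\notin\CC\sigma_Y$, while an antiholomorphic one would send the Kähler cone $\mathcal{K}_Y$ into $-\mathcal{K}_Y$, contradicting $\omega_Y\mapsto\omega_Y\in\mathcal{K}_Y$. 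I expect the genuine obstacles to be exactly these last two points: justifying $i^{*}\in\Mon^2(X)$ in the antiholomorphic direct cases (which must be routed through a twistor rotation rather than argued on the lattice) and the impossibility direction, where the lattice computation alone only detects the wrong determinant and the two complex-geometric contradictions above are what actually rule out any geometric involution.
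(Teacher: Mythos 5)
Your proposal is correct and follows essentially the same route as the paper: an explicit row-by-row verification of the $\pm$-action of $\check{\imath}_{dir}^{\,*}$ and $\check{\imath}_{ind}^{\,*}$ on $V_Y=\Vect_{\RR}(\omega_Y,\Ree\sigma_Y,\Ima\sigma_Y)$ using Remark \ref{rm type in terms T and S} and the formulas for $\check{\sigma}_X$, $\check{\omega}_X$, promotion to a brane involution on $Y$ via membership in $\Mon^2(Y)$ together with Lemma \ref{keylemma}, and the orientation obstruction (determinant $-1$ on the positive three-space, i.e.\ failure to lie in $\Ort^+(H^2(Y,\Z))$) in the $\ABA$-indirect case. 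In fact you make explicit two points the paper's proof leaves implicit: that $i^*\in\Mon^2(X)$ (routed through the twistor rotation in the antiholomorphic cases) is needed so that conjugation by the parallel-transport operator $\psi^{-1}\circ\varphi$ places $\check{\imath}_{dir}^{\,*}$ in $\Mon^2(Y)$, and that Hypothesis \ref{injective} transfers from $X$ to $Y$ by the deformation invariance of $\Ker\nu$.
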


\begin{proof}
First of all, the computation of the type of $\check{\imath}$ and the conditions on $j$ and $\beta$ follow easily from Remark \ref{rm type in terms T and S} and the definitions of $\check{\omega}_X$ and $\check{\sigma}_X$ given in Section \ref{definition}.

By definition, $(X,\varphi)$ and $(Y,\psi)$ are in the same connected component of the moduli space of marked IHS manifolds. It follows that $\psi^{-1}\circ \varphi$ is a parallel transport operator. Hence the direct mirror involution in cohomology, $\check{\imath}_{dir}^{\, *}$, is in $\Mon^2(Y)$.
In every case considered we have that $\check{\imath}_{dir}^{\, *}$ acts on $V=\Vect_{\RR}(\omega_{Y},\Ree \sigma_{Y}, \Ima \sigma_{Y})$ like either the identity or the reflection through the rays generated by $\omega_{Y}$, $\Ree \sigma_{Y}$ or $\Ima \sigma_{Y}$. Then, the direct case follows from Lemma \ref{keylemma}.

The proof of the indirect case is analogous, but we need to assume that $\check{\imath}_{ind}^{\, *}$ is a monodromy operator since this does not hold automatically. 

The final statement follows from the observation that, in order to extend the involution in cohomology to the whole $Y$, the involution in cohomology has to be contained in $\Ort^+(H^2(Y,\Z))$. Suppose $i$ is an $\ABA$-involution and consider $\check{\imath}_{ind}^{\, *}$. It follows that $j:U\hookrightarrow \varphi(S)$ and to have $\Ree \check{\sigma}_X$ anti-invariant, we need $\beta\in \varphi(T)\otimes\RR$. But then $\Ima \check{\sigma}_X$ and $\check{\omega}_X$ are in $(\varphi(S)\cap M)\otimes\RR$, hence invariant by the action of $\check{\imath}_{ind}^{\,*}$. It follows that the involution in cohomology induced by $\check{\imath}_{ind}^{\,*}$ is not contained in $\Ort^+(H^2(Y,\Z))$ and then it cannot be extended to an involution on $Y$.
\end{proof}

\begin{remark}\label{MonO+}
If $\Mon^2(Y)=\Ort^+(H^2(Y,\Z))$, then $\check{\imath}_{ind}^{\, *}$ is always contained in $\Mon^2(Y)$. This is the case, in particular, when $X$ is a $\K3$ surface (see for instance \cite[ Proposition 5.5, Chapter 7]{lecture-on-K3}) or when $X$ is of $\K3^{[n]}$-type with $n=2$ or $n-1$ a prime power (see \cite[Lemma 4.2]{Markman0}).
\end{remark}

For the sake of completeness, we conclude this section constructing an $\ABA$-involution in the mirror related to the injection $j:U\hookrightarrow \Gamma$. Let $(X,\sigma_{X},\omega_{X},\varphi)$ be a marked IHS manifold endowed with a
hyperk\"ahler structure with $\varphi: H^2(X,\Z)\rightarrow\Gamma$. Assume that $(X,\sigma_{X},\omega_{X},\varphi)$ admits a hyperk\"ahler mirror $(Y,\sigma_{Y},\omega_{Y},\psi)$ according to $j$ and $\beta\in\RR\cdot v$.

Define the involution $\tau^* : H^2(Y,\ZZ) \to H^2(Y,\ZZ)$ setting
\begin{equation} \label{eq sporadic}
\tau^*|_{\psi^{-1}(j(U))}=\id\ \et\ \tau^*|_{\psi^{-1}(M)}=-\id.
\end{equation}
Since we assume that $(Y,\sigma_Y, \omega_Y, \psi)$ is the hyperk\"ahler mirror, we have that $\omega_X \in (\varphi^{-1}(M) \otimes \RR) \oplus \RR \varphi^{-1}(v)$ and $\Ima (\sigma_X) \cdot \varphi^{-1}(v) = 0$. Note that these conditions, together with \eqref{eq sporadic}, imply that $\tau^*$ preserves $\Ree \sigma_Y$ and inverts $\omega_Y$ and $\Ima \sigma_Y$.
If we assume that $Y$ respects Hypothesis \ref{injective} and $\tau\in \Mon^2(Y)$, then, by Lemma \ref{keylemma}, one has that $\tau^*$ extends to a $\ABA$-brane involution on $(Y,\sigma_{Y},\omega_{Y},\psi)$.

\section{Some concrete examples}\label{Examples}
\label{sc examples}

\subsection{Mirror transformation of brane involutions on $\K3$ surfaces}


In the case of $\K3$ surfaces, the existence of a hyperk\"ahler mirror is simplified as stated in Corollary \ref{K3case}. Furthermore, by Remarks \ref{rm Hyp 2 holds in K3 and other IHSM} and \ref{MonO+}, the hypothesis of Theorem \ref{tm mirror transform of branes} hold automatically. Therefore, in order to extend $\check{\imath}_{dir}^{\, *}$ and $\check{\imath}_{ind}^{\, *}$ to involutions on the mirror $\K3$ surface we only have to impose the conditions of the table of Theorem \ref{tm mirror transform of branes}. 


Recall the definitons of $T$ and $S$ as provided in Remark \ref{rm type in terms T and S} and note, thanks to Remark \ref{rm other brane involutions in K3}, that $\varphi(T)$ is a 2-elementary Lorentzian sublattice of $\Gamma = U^3\oplus E_{8}(-1)^{2}$ classified by the integers $(r,a,\delta)$.

\begin{example}\label{ex ABA ABA}
Let $(X,\sigma_{X},\omega_{X},\varphi)$ be a $\K3$ surface with $\omega_{X}^2\in\RR\smallsetminus \Q$ and endowed with a $\ABA$-involution $i$. Suppose that there exists an embedding $j:U\hookrightarrow \varphi(T)$. 

Then, there exists $\beta\in \varphi(T)\otimes\RR$ such that $(X,\sigma_{X},\omega_{X},\varphi)$ admits a hyperk\"ahler mirror accor\-ding to $j$ and $\beta$ endowed with an $\ABA$-involution $\check{\imath}$, which is the direct mirror transform of $i$. 

Both involutions $i$ and $\check{\imath}$ have the same invariants $(r,a,\delta)$, and the admissible values for $(r,a,\delta)$ are represented by the graph in Figure \ref{fig ABA ABA}.


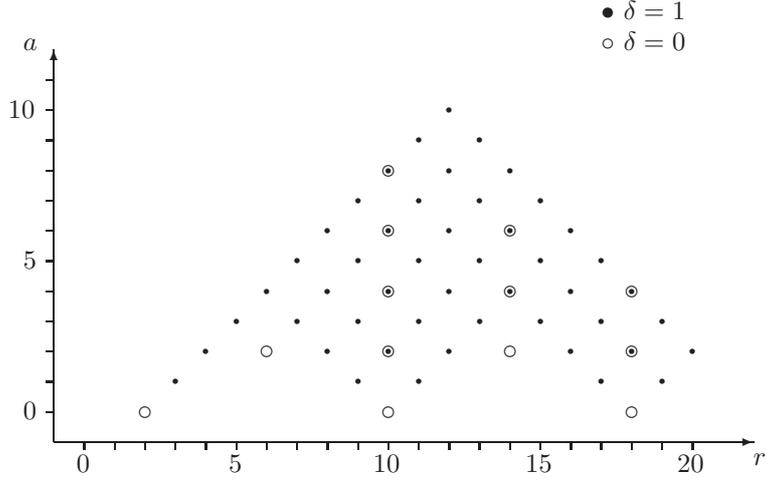
\begin{figure}[H]
\setlength{\unitlength}{2cm}
\centering
$$\begin{picture}(2,3)(1,0)

\put(-0.2,-0.2){\vector(1,0){4.6}}
\put(-0.2,-0.2){\vector(0,1){2.6}}

\put(0,-0.25){\line(0,1){0.05}}
\put(0.2,-0.25){\line(0,1){0.05}}
\put(0.4,-0.25){\line(0,1){0.05}}
\put(0.6,-0.25){\line(0,1){0.05}}
\put(0.8,-0.25){\line(0,1){0.05}}
\put(1,-0.25){\line(0,1){0.05}}
\put(1.2,-0.25){\line(0,1){0.05}}
\put(1.4,-0.25){\line(0,1){0.05}}
\put(1.6,-0.25){\line(0,1){0.05}}
\put(1.8,-0.25){\line(0,1){0.05}}
\put(2,-0.25){\line(0,1){0.05}}
\put(2.2,-0.25){\line(0,1){0.05}}
\put(2.4,-0.25){\line(0,1){0.05}}
\put(2.6,-0.25){\line(0,1){0.05}}
\put(2.8,-0.25){\line(0,1){0.05}}
\put(3,-0.25){\line(0,1){0.05}}
\put(3.2,-0.25){\line(0,1){0.05}}
\put(3.4,-0.25){\line(0,1){0.05}}
\put(3.6,-0.25){\line(0,1){0.05}}
\put(3.8,-0.25){\line(0,1){0.05}}
\put(4,-0.25){\line(0,1){0.05}}

\put(-0.05,-0.4){$0$}
\put(0.95,-0.4){$5$}
\put(1.90,-0.4){$10$}
\put(2.90,-0.4){$15$}
\put(3.90,-0.4){$20$}
\put(4.4,-0.35){$r$}

\put(-0.25,0){\line(1,0){0.05}}
\put(-0.25,0.2){\line(1,0){0.05}}
\put(-0.25,0.4){\line(1,0){0.05}}
\put(-0.25,0.6){\line(1,0){0.05}}
\put(-0.25,0.8){\line(1,0){0.05}}
\put(-0.25,1){\line(1,0){0.05}}
\put(-0.25,1.2){\line(1,0){0.05}}
\put(-0.25,1.4){\line(1,0){0.05}}
\put(-0.25,1.6){\line(1,0){0.05}}
\put(-0.25,1.8){\line(1,0){0.05}}
\put(-0.25,2){\line(1,0){0.05}}
\put(-0.25,2.2){\line(1,0){0.05}}

\put(-0.4,-0.05){$0$}
\put(-0.4,0.95){$5$}
\put(-0.5,1.95){$10$}
\put(-0.4,2.4){$a$}


\put(0.4,0){\circle{.07}}
\put(2,0){\circle{.07}}
\put(3.6,0){\circle{.07}}
\put(2,1.6){\circle{.07}}
\put(2,1.2){\circle{.07}}
\put(2,0.8){\circle{.07}}
\put(2,0.4){\circle{.07}}
\put(1.2,0.4){\circle{.07}}
\put(2.8,0.4){\circle{.07}}
\put(2.8,0.8){\circle{.07}}
\put(2.8,1.2){\circle{.07}}
\put(3.6,0.4){\circle{.07}}
\put(3.6,0.8){\circle{.07}}

\put(0.6,0.2){\circle*{.03}}
\put(1.8,0.2){\circle*{.03}}
\put(2.2,0.2){\circle*{.03}}
\put(3.4,0.2){\circle*{.03}}
\put(3.8,0.2){\circle*{.03}}

\put(0.8,0.4){\circle*{.03}}
\put(1.6,0.4){\circle*{.03}}
\put(2,0.4){\circle*{.03}}
\put(2.4,0.4){\circle*{.03}}
\put(3.2,0.4){\circle*{.03}}
\put(3.6,0.4){\circle*{.03}}
\put(4,0.4){\circle*{.03}}

\put(1,0.6){\circle*{.03}}
\put(1.4,0.6){\circle*{.03}}
\put(1.8,0.6){\circle*{.03}}
\put(2.2,0.6){\circle*{.03}}
\put(2.6,0.6){\circle*{.03}}
\put(3,0.6){\circle*{.03}}
\put(3.4,0.6){\circle*{.03}}
\put(3.8,0.6){\circle*{.03}}

\put(1.2,0.8){\circle*{.03}}
\put(1.6,0.8){\circle*{.03}}
\put(2,0.8){\circle*{.03}}
\put(2.4,0.8){\circle*{.03}}
\put(2.8,0.8){\circle*{.03}}
\put(3.2,0.8){\circle*{.03}}
\put(3.6,0.8){\circle*{.03}}

\put(1.4,1){\circle*{.03}}
\put(1.8,1){\circle*{.03}}
\put(2.2,1){\circle*{.03}}
\put(2.6,1){\circle*{.03}}
\put(3,1){\circle*{.03}}
\put(3.4,1){\circle*{.03}}

\put(1.6,1.2){\circle*{.03}}
\put(2,1.2){\circle*{.03}}
\put(2.4,1.2){\circle*{.03}}
\put(2.8,1.2){\circle*{.03}}
\put(3.2,1.2){\circle*{.03}}

\put(1.8,1.4){\circle*{.03}}
\put(2.2,1.4){\circle*{.03}}
\put(2.6,1.4){\circle*{.03}}
\put(3,1.4){\circle*{.03}}

\put(2,1.6){\circle*{.03}}
\put(2.4,1.6){\circle*{.03}}
\put(2.8,1.6){\circle*{.03}}

\put(2.2,1.8){\circle*{.03}}
\put(2.6,1.8){\circle*{.03}}

\put(2.4,2){\circle*{.03}}


\put(3.4,2.6){$\bullet \ \delta=1$}
\put(3.4,2.4){$\circ \ \delta=0$}
\end{picture}$$\vspace{-0.2cm}

\caption{$\ABA$ to $\ABA$}\label{fig ABA ABA}
\end{figure}

\end{example}

\begin{proof}
Since $i$ is a $\ABA$-involution we have 
\begin{itemize}
\item[(i)]
$\omega_X\in S\otimes\RR$,
\item[(ii)]
$\Ree \sigma_X\in T\otimes\RR$,
\item[(iii)]
$\Ima \sigma_X\in S\otimes\RR$.
\end{itemize}
Hence, from (i) and (iii), $(X,\sigma_{X},\omega_{X},\varphi)$ is admissible according to $j$. Moreover, by (ii), we can find $\beta\in ((\varphi(T)\cap M)\otimes\RR)\oplus\RR v$ such that for all $D\in \varphi(T)$ with $D^2=-2$: $$\left(\Pr (\varphi(\Ree \sigma_X))\cdot D\neq 0\right) \Rightarrow \left(\Pr(\beta)\cdot D \neq 0\right).$$
Then first statement follows from Corollary \ref{K3case} and Theorem \ref{tm mirror transform of branes}.

Since we demand $j:U\hookrightarrow \varphi(T)$, the admissible values $(r,a,\delta)$ are those occurring in the graph of Figure \ref{fig BAA} such that $r\geq a+2$ except $(6,4,0)$. To see this note that, by \cite[Theorem 1.13.5]{Lattice}, the values $(r,a,\delta)$ such that $r\geq a+3$ are admissible. Consider now the values with
$r=a+2$, if $\delta=1$, by \cite[Theorem 3.6.2]{Lattice}, $T$ is isomorphic to the a lattice of the form $U\oplus (-2)^{a}$ and then, it always admit an embedding $U \hookrightarrow T$ in this case. In the rest of the cases,
\begin{itemize}
\item for $(2,0,0)$, we have $T=U$ so, for this value $U \hookrightarrow T$ is admissible;
\item for $(6,4,0)$ we get $T = U(2)\oplus D_4(-1)$ so $U \hookrightarrow T$ is not admissible; and
\item for $(10,8,0)$ we get $T = U\oplus E_8(-2)$, and for this value $U \hookrightarrow T$ is admissible too.
\end{itemize}
\end{proof}


\begin{example}\label{BAAAAB}
Let $(X,\sigma_{X},\omega_{X},\varphi)$ be a $\K3$ surface with $\omega_{X}^2\in\RR\smallsetminus \Q$ and endowed with a $\BAA$-involution $i$. Suppose that there exists an embedding $j:U\hookrightarrow \varphi(S)$. 

Then, there exists $\lambda\in \C^*$ and $\beta\in \varphi(S)\otimes\RR$ such that $(X,\lambda\sigma_{X},\omega_{X},\varphi)$ admits a hyperk\"ahler mirror according to $j$ and $\beta$ endowed with a $\AAB$-involution $\check{\imath}$, which is the direct mirror transform of $i$. 

The involutions $i$ and $\check{\imath}$ have both the same invariants $(r,a,\delta)$ and the admissible values for $(r,a,\delta)$ are represented in Figure \ref{fig BAA AAB}.

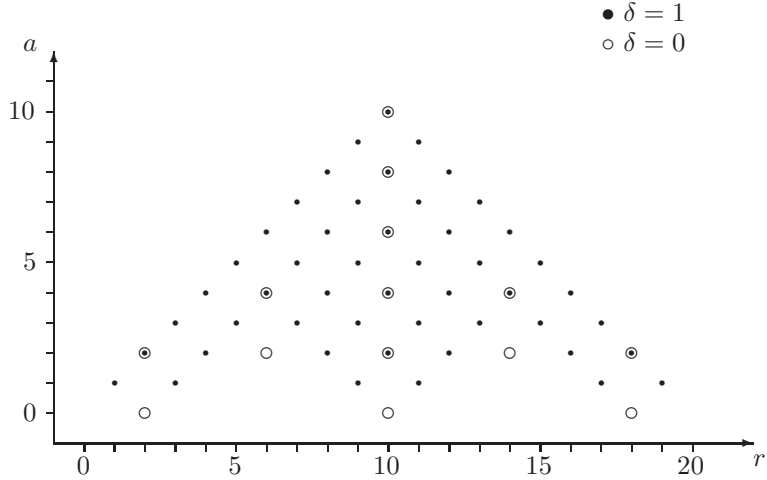
\begin{figure}[H]
\setlength{\unitlength}{2cm}
\centering
$$\begin{picture}(2,3)(1,0)
\put(-0.2,-0.2){\vector(1,0){4.6}}
\put(-0.2,-0.2){\vector(0,1){2.6}}

\put(0,-0.25){\line(0,1){0.05}}
\put(0.2,-0.25){\line(0,1){0.05}}
\put(0.4,-0.25){\line(0,1){0.05}}
\put(0.6,-0.25){\line(0,1){0.05}}
\put(0.8,-0.25){\line(0,1){0.05}}
\put(1,-0.25){\line(0,1){0.05}}
\put(1.2,-0.25){\line(0,1){0.05}}
\put(1.4,-0.25){\line(0,1){0.05}}
\put(1.6,-0.25){\line(0,1){0.05}}
\put(1.8,-0.25){\line(0,1){0.05}}
\put(2,-0.25){\line(0,1){0.05}}
\put(2.2,-0.25){\line(0,1){0.05}}
\put(2.4,-0.25){\line(0,1){0.05}}
\put(2.6,-0.25){\line(0,1){0.05}}
\put(2.8,-0.25){\line(0,1){0.05}}
\put(3,-0.25){\line(0,1){0.05}}
\put(3.2,-0.25){\line(0,1){0.05}}
\put(3.4,-0.25){\line(0,1){0.05}}
\put(3.6,-0.25){\line(0,1){0.05}}
\put(3.8,-0.25){\line(0,1){0.05}}
\put(4,-0.25){\line(0,1){0.05}}

\put(-0.05,-0.4){$0$}
\put(0.95,-0.4){$5$}
\put(1.90,-0.4){$10$}
\put(2.90,-0.4){$15$}
\put(3.90,-0.4){$20$}
\put(4.4,-0.35){$r$}

\put(-0.25,0){\line(1,0){0.05}}
\put(-0.25,0.2){\line(1,0){0.05}}
\put(-0.25,0.4){\line(1,0){0.05}}
\put(-0.25,0.6){\line(1,0){0.05}}
\put(-0.25,0.8){\line(1,0){0.05}}
\put(-0.25,1){\line(1,0){0.05}}
\put(-0.25,1.2){\line(1,0){0.05}}
\put(-0.25,1.4){\line(1,0){0.05}}
\put(-0.25,1.6){\line(1,0){0.05}}
\put(-0.25,1.8){\line(1,0){0.05}}
\put(-0.25,2){\line(1,0){0.05}}
\put(-0.25,2.2){\line(1,0){0.05}}

\put(-0.4,-0.05){$0$}
\put(-0.4,0.95){$5$}
\put(-0.5,1.95){$10$}
\put(-0.4,2.4){$a$}


\put(0.4,0){\circle{.07}}
\put(2,0){\circle{.07}}
\put(3.6,0){\circle{.07}}
\put(2,2){\circle{.07}}
\put(2,1.6){\circle{.07}}
\put(2,1.2){\circle{.07}}
\put(2,0.8){\circle{.07}}
\put(2,0.4){\circle{.07}}
\put(0.4,0.4){\circle{.07}}
\put(1.2,0.4){\circle{.07}}
\put(1.2,0.8){\circle{.07}}
\put(2.8,0.4){\circle{.07}}
\put(2.8,0.8){\circle{.07}}
\put(3.6,0.4){\circle{.07}}

\put(0.2,0.2){\circle*{.03}}
\put(0.6,0.2){\circle*{.03}}
\put(1.8,0.2){\circle*{.03}}
\put(2.2,0.2){\circle*{.03}}
\put(3.4,0.2){\circle*{.03}}
\put(3.8,0.2){\circle*{.03}}

\put(0.4,0.4){\circle*{.03}}
\put(0.8,0.4){\circle*{.03}}
\put(1.6,0.4){\circle*{.03}}
\put(2,0.4){\circle*{.03}}
\put(2.4,0.4){\circle*{.03}}
\put(3.2,0.4){\circle*{.03}}
\put(3.6,0.4){\circle*{.03}}

\put(0.6,0.6){\circle*{.03}}
\put(1,0.6){\circle*{.03}}
\put(1.4,0.6){\circle*{.03}}
\put(1.8,0.6){\circle*{.03}}
\put(2.2,0.6){\circle*{.03}}
\put(2.6,0.6){\circle*{.03}}
\put(3,0.6){\circle*{.03}}
\put(3.4,0.6){\circle*{.03}}

\put(0.8,0.8){\circle*{.03}}
\put(1.2,0.8){\circle*{.03}}
\put(1.6,0.8){\circle*{.03}}
\put(2,0.8){\circle*{.03}}
\put(2.4,0.8){\circle*{.03}}
\put(2.8,0.8){\circle*{.03}}
\put(3.2,0.8){\circle*{.03}}

\put(1,1){\circle*{.03}}
\put(1.4,1){\circle*{.03}}
\put(1.8,1){\circle*{.03}}
\put(2.2,1){\circle*{.03}}
\put(2.6,1){\circle*{.03}}
\put(3,1){\circle*{.03}}

\put(1.2,1.2){\circle*{.03}}
\put(1.6,1.2){\circle*{.03}}
\put(2,1.2){\circle*{.03}}
\put(2.4,1.2){\circle*{.03}}
\put(2.8,1.2){\circle*{.03}}

\put(1.4,1.4){\circle*{.03}}
\put(1.8,1.4){\circle*{.03}}
\put(2.2,1.4){\circle*{.03}}
\put(2.6,1.4){\circle*{.03}}

\put(1.6,1.6){\circle*{.03}}
\put(2,1.6){\circle*{.03}}
\put(2.4,1.6){\circle*{.03}}

\put(1.8,1.8){\circle*{.03}}
\put(2.2,1.8){\circle*{.03}}

\put(2,2){\circle*{.03}}


\put(3.4,2.6){$\bullet \ \delta=1$}
\put(3.4,2.4){$\circ \ \delta=0$}
\end{picture}$$\vspace{-0.2cm}

\caption{$\BAA$ to $\AAB$}\label{fig BAA AAB}
\end{figure}
\end{example}

\begin{proof}
Since $i$ is a $\BAA$-involution we have 
\begin{itemize}
\item[(i)]
$\omega_X\in T\otimes\RR$,
\item[(ii)]
$\Ree \sigma_X\in S\otimes\RR$,
\item[(iii)]
$\Ima \sigma_X\in S\otimes\RR$.
\end{itemize}
If $(\varphi(\Ima\sigma_X))\cdot v=0$, $(X,\sigma_{X},\omega_{X},\varphi)$ is admissible according to $j$ thanks to (i). If $\varphi(\Ima\sigma_X)\cdot v\neq0$, we denote 
\[
\lambda:=\sqrt{-1}-\frac{\varphi(\Ree \sigma_X)\cdot v}{\varphi(\Ima \sigma_X)\cdot v}\in \C^*.
\]
Then $(\Ima\lambda\sigma_X)\cdot v=0$, it follows that $(X,\lambda\sigma_{X},\omega_{X},\varphi)$ is admissible according to $j$.
Moreover by (ii), we can find $\beta\in ((\varphi(S)\cap M)\otimes\RR)\oplus\RR v$ such that for all $D\in \varphi(S)$ with $D^2=-2$ one has 
\[
\left(\Pr (\varphi(\Ree \sigma_X))\cdot D\neq 0\right) \Rightarrow \left(\Pr(\beta)\cdot D \neq 0\right).
\]
We apply Corollary \ref{K3case} and Theorem \ref{tm mirror transform of branes} to prove the first statement.

In order to allow $j:U\hookrightarrow \varphi(S)$, one requires that $(r,a,\delta)$ are the values given in Figure \ref{fig BAA} such that $20-r\geq a$, except $(14,6,0)$. 

We proceed as in Example \ref{ex ABA ABA}. By \cite[Theorem 1.13.5]{Lattice}, the values $(r,a,\delta)$ such that $19-r\geq a$ are admissible. Let consider the values $(r,a,\delta)$ with
$20-r=a$. Equivalently, we have $r(S)=a(S)+2$. If $\delta=1$, by \cite[Theorem 3.6.2]{Lattice}, $S$ is given by a lattice isomorphic to $U\oplus (2)\oplus (-2)^{a-1}$, and, for these values, $U \hookrightarrow S$ is always admissible. Moreover, in the other cases we have
\begin{itemize}
\item  for $(18,2,0)$, we get $S=U\oplus U(2)$, so for this value $U \hookrightarrow S$ is admissible;
\item for $(10,10,0)$, we get $S = U\oplus U(2)\oplus E_{8}(-2)$, then $U \hookrightarrow S$ is admissible as well; and
\item for $(14,6,0)$, we get $S=U(2)^2\oplus D_4$, and, for this value $U \hookrightarrow S$ is not admissible.
\end{itemize}
\end{proof}


\subsection{Mirror transformation of brane involutions on $\K3^{[2]}$-type manifolds}


By Remarks \ref{rm Hyp 2 holds in K3 and other IHSM} and \ref{MonO+}, $\K3^{[2]}$-type manifolds satisfy the hypotheses of Theorem \ref{tm mirror transform of branes} automatically. Therefore, in order to extend $\check{\imath}_{dir}^{\, *}$ and $\check{\imath}_{ind}^{\, *}$ to involutions on the mirror we only have to impose the conditions of the table of Theorem \ref{tm mirror transform of branes}. In addition, the wall divisors are described by Mongardi (see \cite[Proposition 2.12]{NoteKalher}) as
\[
\Delta_X^{d}=\left\{\left.D\in \Pic X\right|\ D^2=-2 \right\} 
\]
and
\begin{equation} \label{eq description Delta fo K@ 2 type nd}
\Delta_X^{nd}=\left\{\left.D\in \Pic X\right|\ D^2=-10\ \et\ \divi(D)=2 \right\}.
\end{equation}

\begin{example}\label{K32ABAABA}
Let $(X,\sigma_{X},\omega_{X},\varphi)$ be an IHS manifold of $\K3^{[2]}$-type with $\omega_{X}^2\in\RR\smallsetminus \Q$ and endowed with a $\ABA$-involution $i$. Suppose that $a(S)=a(T)-1$ and that there exists an embedding $j:U\hookrightarrow \varphi(T)$. 

Then, there exists $\beta\in \varphi(T)\otimes\RR$ such that $(X,\sigma_{X},\omega_{X},\varphi)$ admits a hyperk\"ahler mirror according to $j$ and $\beta$ endowed with a $\ABA$-involution $\check{\imath}$, which is the direct mirror transform of $i$.  

The involutions $i$ and $\check{\imath}$ are both determined by the same integers $(r(T),a(T),\delta(T),$ $ r(S), a(S), \delta(S))$ with admissible values represented by the graph in Figure \ref{K32ABAABA2}.

\begin{figure}[H]
\setlength{\unitlength}{2cm}
\centering
$$\begin{picture}(2,3)(1,0)
\put(-0.4,-0.4){\vector(1,0){4.8}}
\put(-0.4,-0.4){\vector(0,1){2.8}}

\put(-0.2,-0.45){\line(0,1){0.05}}
\put(0,-0.45){\line(0,1){0.05}}
\put(0.2,-0.45){\line(0,1){0.05}}
\put(0.4,-0.45){\line(0,1){0.05}}
\put(0.6,-0.45){\line(0,1){0.05}}
\put(0.8,-0.45){\line(0,1){0.05}}
\put(1,-0.45){\line(0,1){0.05}}
\put(1.2,-0.45){\line(0,1){0.05}}
\put(1.4,-0.45){\line(0,1){0.05}}
\put(1.6,-0.45){\line(0,1){0.05}}
\put(1.8,-0.45){\line(0,1){0.05}}
\put(2,-0.45){\line(0,1){0.05}}
\put(2.2,-0.45){\line(0,1){0.05}}
\put(2.4,-0.45){\line(0,1){0.05}}
\put(2.6,-0.45){\line(0,1){0.05}}
\put(2.8,-0.45){\line(0,1){0.05}}
\put(3,-0.45){\line(0,1){0.05}}
\put(3.2,-0.45){\line(0,1){0.05}}
\put(3.4,-0.45){\line(0,1){0.05}}
\put(3.6,-0.45){\line(0,1){0.05}}
\put(3.8,-0.45){\line(0,1){0.05}}
\put(4,-0.45){\line(0,1){0.05}}

\put(-0.25,-0.6){$0$}
\put(0.75,-0.6){$5$}
\put(1.7,-0.6){$10$}
\put(2.7,-0.6){$15$}
\put(3.7,-0.6){$20$}

\put(4.4,-0.55){$r$}

\put(-0.45,-0.2){\line(1,0){0.05}}
\put(-0.45,0){\line(1,0){0.05}}
\put(-0.45,0.2){\line(1,0){0.05}}
\put(-0.45,0.4){\line(1,0){0.05}}
\put(-0.45,0.6){\line(1,0){0.05}}
\put(-0.45,0.8){\line(1,0){0.05}}
\put(-0.45,1){\line(1,0){0.05}}
\put(-0.45,1.2){\line(1,0){0.05}}
\put(-0.45,1.4){\line(1,0){0.05}}
\put(-0.45,1.6){\line(1,0){0.05}}
\put(-0.45,1.8){\line(1,0){0.05}}
\put(-0.45,2){\line(1,0){0.05}}
\put(-0.45,2.2){\line(1,0){0.05}}

\put(-0.6,-0.25){$0$}
\put(-0.6,0.75){$5$}
\put(-0.7,1.75){$10$}

\put(-0.6,2.4){$a$}


\put(0.4,0){\circle{0.07}}
\put(2,0){\circle{.07}}
\put(3.6,0){\circle{.07}} 
\put(2,1.6){\circle{.07}}
\put(2,1.2){\circle{.07}}
\put(2,0.8){\circle{.07}}
\put(2,0.4){\circle{.07}}
\put(1.2,0.4){\circle{.07}}
\put(1.2,0.8){\circle{.07}}
\put(2.8,0.4){\circle{.07}}
\put(2.8,0.8){\circle{.07}}
\put(2.8,1.2){\circle{.07}}
\put(3.6,0.4){\circle{.07}}
\put(3.6,0.8){\circle{.07}}
\put(0.6,0.2){\circle*{.03}}
\put(1.8,0.2){\circle*{.03}}
\put(2.2,0.2){\circle*{.03}}
\put(3.4,0.2){\circle*{.03}}
\put(3.8,0.2){\circle*{.03}}

\put(0.8,0.4){\circle*{.03}}
\put(1.6,0.4){\circle*{.03}}
\put(2,0.4){\circle*{.03}}
\put(2.4,0.4){\circle*{.03}}
\put(3.2,0.4){\circle*{.03}}
\put(3.6,0.4){\circle*{.03}}
\put(4,0.4){\circle*{.03}}

\put(1,0.6){\circle*{.03}}
\put(1.4,0.6){\circle*{.03}}
\put(1.8,0.6){\circle*{.03}}
\put(2.2,0.6){\circle*{.03}}
\put(2.6,0.6){\circle*{.03}}
\put(3,0.6){\circle*{.03}}
\put(3.4,0.6){\circle*{.03}}
\put(3.8,0.6){\circle*{.03}}

\put(1.2,0.8){\circle*{.03}}
\put(1.6,0.8){\circle*{.03}}
\put(2,0.8){\circle*{.03}}
\put(2.4,0.8){\circle*{.03}}
\put(2.8,0.8){\circle*{.03}}
\put(3.2,0.8){\circle*{.03}}
\put(3.6,0.8){\circle*{.03}}

\put(1.4,1){\circle*{.03}}
\put(1.8,1){\circle*{.03}}
\put(2.2,1){\circle*{.03}}
\put(2.6,1){\circle*{.03}}
\put(3,1){\circle*{.03}}
\put(3.4,1){\circle*{.03}}

\put(1.6,1.2){\circle*{.03}}
\put(2,1.2){\circle*{.03}}
\put(2.4,1.2){\circle*{.03}}
\put(2.8,1.2){\circle*{.03}}
\put(3.2,1.2){\circle*{.03}}

\put(1.8,1.4){\circle*{.03}}
\put(2.2,1.4){\circle*{.03}}
\put(2.6,1.4){\circle*{.03}}
\put(3,1.4){\circle*{.03}}

\put(2,1.6){\circle*{.03}}
\put(2.4,1.6){\circle*{.03}}
\put(2.8,1.6){\circle*{.03}}

\put(2.2,1.8){\circle*{.03}}
\put(2.6,1.8){\circle*{.03}}

\put(2.4,2){\circle*{.03}}


\put(3.4,2.6){$\bullet\ \delta(T)=\delta(S)=1$}
\put(3.4,2.4){$\circ\ \delta(T)=1,\ \delta(S)=0$}
\end{picture}$$\vspace{0.2cm}
\caption{$\ABA$ to $\ABA$, with $a(S)=a(T)-1$}\label{K32ABAABA2}
\end{figure}
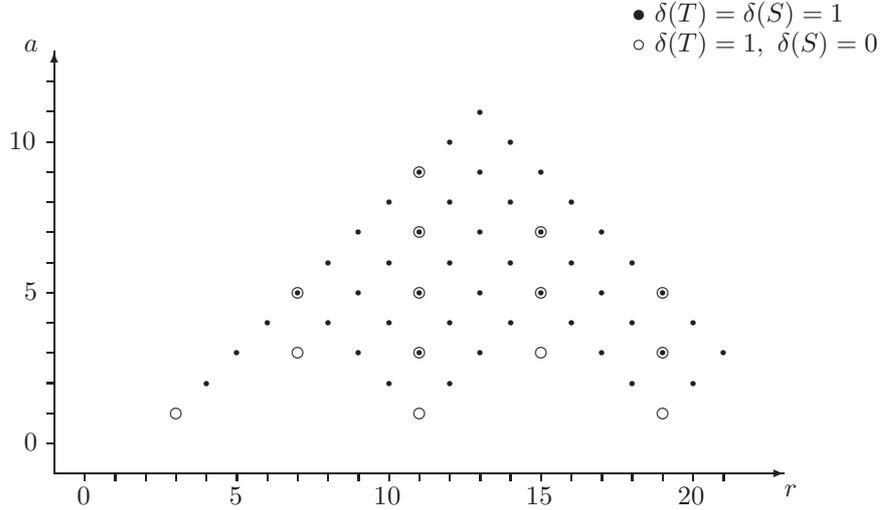

\end{example}

\begin{proof}
Let us recall that the discriminant of a lattice $\discr (L)$ is the absolute value of the determinant of the bilinear form of a lattice $L$. Given a sublattice $S \subset L$ of the same rank, we recall as well the following formula, relating their discriminant with the cardinality of $L/S$
\[
|L/S|^2 = \frac{\discr(L)}{\discr(S)}. 
\]
We have $\discr (\Gamma) = 2$ and by definition of $a$, $\discr (T\oplus S) = 2^{a(T)+a(S)}=2^{1+2a(S)}$. Setting $H_T=\Gamma/T\oplus S$, one has that the cardinality of $H_T$ is $|H_T| = 2^{a(S)}$, so $H_T=(\Z/2\Z)^{a(S)}$. Moreover, since $T$ is a primitive sublattice of $\Gamma$, we have
$H_T\hookrightarrow A_T$. It follows that $A_T=H_T\oplus H_T^\bot$, with $H_T^\bot=\Z/2\Z$.
Necessary, $H_T^\bot\subset A_\Gamma$. Since $A_\Gamma=\Z/2\Z$, one has that
\begin{equation} \label{discrGT}
A_T=H_T\oplus A_\Gamma.
\end{equation}


Recall \eqref{eq description Delta fo K@ 2 type nd} and take a primitive $D\in \Delta_X^{nd}$. Since $\divi(D)=2$, then $\frac{1}{2}D$ is inside $\Gamma^*$ and it provides a non trivial element in $A_\Gamma$ that we denote by $\frac{1}{2}\overline{D}$. By (\ref{discrGT}), we have $A_\Gamma\subset A_T$, so there exists $x\in T$ such that 
\[
\frac{1}{2}\overline{D}=\frac{1}{2}\overline{x}.
\]
Then
\[
\frac{1}{2}D = \frac{1}{2}x + y,
\]
with $y\in \Gamma$, so 
\[
D=x+2y,
\]
The element $x$ is an element non divisible by 2 in $T$ since $\frac{1}{2}\overline{x}$ is not $0$, hence
\begin{equation}
\textnormal{for all } D \in \Delta_X^{nd} \textnormal{ one has that } D \notin S. 
\label{K32equa1}
\end{equation}
  
Since $i$ is an $\ABA$-involution, 
\begin{itemize}
\item[(i)]
$\omega_X\in S\otimes\RR$,
\item[(ii)]
$\Ree \sigma_X\in T\otimes\RR$,
\item[(iii)]
$\Ima \sigma_X\in S\otimes\RR$.
\end{itemize}
From (i) and (iii), $(X,\sigma_{X},\omega_{X},\varphi)$ is admissible according to $j$. Moreover by (ii) and (\ref{K32equa1}), we can find $\beta\in ((\varphi(T)\cap M) \otimes \RR) \oplus \RR v$ such that for all $D\in \varphi(T)$ with $D^2=-2$, one has
\[
\left(\Pr (\varphi(\Ree \sigma_X))\cdot D\neq 0\right) \Rightarrow \left(\Pr(\beta)\cdot D \neq 0\right),
\]
and such that, for all $D\in\Delta_X^{nd}$,
\[
\beta\cdot D\neq 0.
\]
The result follows from Proposition \ref{pr main} and Theorem \ref{tm mirror transform of branes}.

Next, we must find all the values $(r(T),a(T),\delta(T),r(S),a(S),\delta(S))$ which allow an embedding $j:U\hookrightarrow \varphi(T)$. Since we have $a(S)=a(T)-1$, the values considered are the one referenced in Figure \ref{K32BAA2}. In particular, it means that $\delta(T)$ is always 1. As explained in the proof of Example \ref{ex ABA ABA}, by \cite[Theorem 1.13.5]{Lattice}, $r(T)\geq a(T)+3$, we have an embedding $j:U\hookrightarrow \varphi(S)$. In the case $r(T)= a(T)+2$, since $\delta(T)=1$, by \cite[Theorem 3.6.2]{Lattice}, $T$ is isomorphic to $U\oplus (-2)^{a(T)}$. Hence, all values such that $r(T)\geq a(T)+2$ allow an embedding $j:U\hookrightarrow \varphi(T)$.
\end{proof}

\begin{example}\label{K32BAAAAB}
Let $(X,\sigma_{X},\omega_{X},\varphi)$ be an IHS manifold of $\K3^{[2]}$-type with $\omega_{X}^2\in\RR\smallsetminus \Q$ and endowed with a $\BAA$-involution $i$. Suppose that $a(S)=a(T)+1$ and that there exists an embedding $j:U\hookrightarrow \varphi(S)$. 

Then, there exists $\lambda\in \C^*$ and $\beta\in \varphi(S)\otimes\RR$ such that $(X,\lambda\sigma_{X},\omega_{X},\varphi)$ admits a hyperk\"ahler mirror according to $j$ and $\beta$ endowed with a $\AAB$-involution $\check{\imath}$, which is the direct mirror transform of $i$.  

The involutions $i$ and $\check{\imath}$ are both determined by the same integers $(r(T),a(T),\delta(T),$ $ r(S), a(S), \delta(S))$ with admissible values represented by the graph in Figure \ref{K32AAB2}.

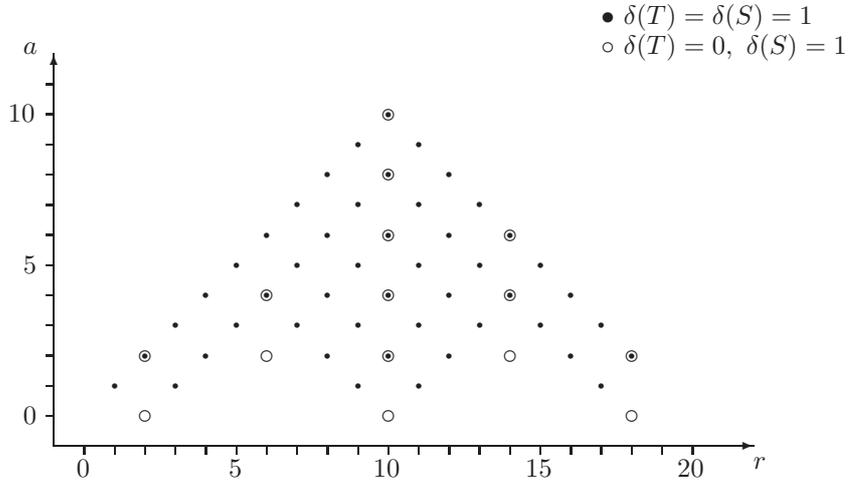
\begin{figure}[H]
\setlength{\unitlength}{2cm}
\centering
$$\begin{picture}(2,3)(1,0)

\put(-0.2,-0.2){\vector(1,0){4.6}}
\put(-0.2,-0.2){\vector(0,1){2.6}}

\put(0,-0.25){\line(0,1){0.05}}
\put(0.2,-0.25){\line(0,1){0.05}}
\put(0.4,-0.25){\line(0,1){0.05}}
\put(0.6,-0.25){\line(0,1){0.05}}
\put(0.8,-0.25){\line(0,1){0.05}}
\put(1,-0.25){\line(0,1){0.05}}
\put(1.2,-0.25){\line(0,1){0.05}}
\put(1.4,-0.25){\line(0,1){0.05}}
\put(1.6,-0.25){\line(0,1){0.05}}
\put(1.8,-0.25){\line(0,1){0.05}}
\put(2,-0.25){\line(0,1){0.05}}
\put(2.2,-0.25){\line(0,1){0.05}}
\put(2.4,-0.25){\line(0,1){0.05}}
\put(2.6,-0.25){\line(0,1){0.05}}
\put(2.8,-0.25){\line(0,1){0.05}}
\put(3,-0.25){\line(0,1){0.05}}
\put(3.2,-0.25){\line(0,1){0.05}}
\put(3.4,-0.25){\line(0,1){0.05}}
\put(3.6,-0.25){\line(0,1){0.05}}
\put(3.8,-0.25){\line(0,1){0.05}}
\put(4,-0.25){\line(0,1){0.05}}

\put(-0.05,-0.4){$0$}
\put(0.95,-0.4){$5$}
\put(1.90,-0.4){$10$}
\put(2.90,-0.4){$15$}
\put(3.90,-0.4){$20$}
\put(4.4,-0.35){$r$}

\put(-0.25,0){\line(1,0){0.05}}
\put(-0.25,0.2){\line(1,0){0.05}}
\put(-0.25,0.4){\line(1,0){0.05}}
\put(-0.25,0.6){\line(1,0){0.05}}
\put(-0.25,0.8){\line(1,0){0.05}}
\put(-0.25,1){\line(1,0){0.05}}
\put(-0.25,1.2){\line(1,0){0.05}}
\put(-0.25,1.4){\line(1,0){0.05}}
\put(-0.25,1.6){\line(1,0){0.05}}
\put(-0.25,1.8){\line(1,0){0.05}}
\put(-0.25,2){\line(1,0){0.05}}
\put(-0.25,2.2){\line(1,0){0.05}}

\put(-0.4,-0.05){$0$}
\put(-0.4,0.95){$5$}
\put(-0.5,1.95){$10$}
\put(-0.4,2.4){$a$}


\put(0.4,0){\circle{.07}}
\put(2,0){\circle{.07}}
\put(3.6,0){\circle{.07}}
\put(2,2){\circle{.07}}
\put(2,1.6){\circle{.07}}
\put(2,1.2){\circle{.07}}
\put(2,0.8){\circle{.07}}
\put(2,0.4){\circle{.07}}
\put(0.4,0.4){\circle{.07}}
\put(1.2,0.4){\circle{.07}}
\put(1.2,0.8){\circle{.07}}
\put(2.8,0.4){\circle{.07}}
\put(2.8,0.8){\circle{.07}}
\put(2.8,1.2){\circle{.07}}
\put(3.6,0.4){\circle{.07}}

\put(0.2,0.2){\circle*{.03}}
\put(0.6,0.2){\circle*{.03}}
\put(1.8,0.2){\circle*{.03}}
\put(2.2,0.2){\circle*{.03}}
\put(3.4,0.2){\circle*{.03}}

\put(0.4,0.4){\circle*{.03}}
\put(0.8,0.4){\circle*{.03}}
\put(1.6,0.4){\circle*{.03}}
\put(2,0.4){\circle*{.03}}
\put(2.4,0.4){\circle*{.03}}
\put(3.2,0.4){\circle*{.03}}
\put(3.6,0.4){\circle*{.03}}

\put(0.6,0.6){\circle*{.03}}
\put(1,0.6){\circle*{.03}}
\put(1.4,0.6){\circle*{.03}}
\put(1.8,0.6){\circle*{.03}}
\put(2.2,0.6){\circle*{.03}}
\put(2.6,0.6){\circle*{.03}}
\put(3,0.6){\circle*{.03}}
\put(3.4,0.6){\circle*{.03}}

\put(0.8,0.8){\circle*{.03}}
\put(1.2,0.8){\circle*{.03}}
\put(1.6,0.8){\circle*{.03}}
\put(2,0.8){\circle*{.03}}
\put(2.4,0.8){\circle*{.03}}
\put(2.8,0.8){\circle*{.03}}
\put(3.2,0.8){\circle*{.03}}

\put(1,1){\circle*{.03}}
\put(1.4,1){\circle*{.03}}
\put(1.8,1){\circle*{.03}}
\put(2.2,1){\circle*{.03}}
\put(2.6,1){\circle*{.03}}
\put(3,1){\circle*{.03}}

\put(1.2,1.2){\circle*{.03}}
\put(1.6,1.2){\circle*{.03}}
\put(2,1.2){\circle*{.03}}
\put(2.4,1.2){\circle*{.03}}
\put(2.8,1.2){\circle*{.03}}

\put(1.4,1.4){\circle*{.03}}
\put(1.8,1.4){\circle*{.03}}
\put(2.2,1.4){\circle*{.03}}
\put(2.6,1.4){\circle*{.03}}

\put(1.6,1.6){\circle*{.03}}
\put(2,1.6){\circle*{.03}}
\put(2.4,1.6){\circle*{.03}}

\put(1.8,1.8){\circle*{.03}}
\put(2.2,1.8){\circle*{.03}}

\put(2,2){\circle*{.03}}


\put(3.4,2.6){$\bullet\ \delta(T)=\delta(S)=1$}
\put(3.4,2.4){$\circ\ \delta(T)=0,\ \delta(S)=1$}
\end{picture}$$\vspace{-0.1cm}

\caption{$\BAA$ to $\AAB$ with $a(S)=a(T)+1$}\label{K32AAB2}
\end{figure}

\end{example}

\begin{proof}
Since $a(S)=a(T)+1$, proceeding as in proof of Example \ref{K32ABAABA} one gets that 
\begin{equation}
\textnormal{for all } D\in\Delta_X^{nd} \textnormal{ one has that } D\notin T. 
\label{K32equa2}
\end{equation}

Now, since $i$ is a $\BAA$-involution, 
\begin{itemize}
\item[(i)]
$\omega_X\in T\otimes\RR$,
\item[(ii)]
$\Ree \sigma_X\in S\otimes\RR$,
\item[(iii)]
$\Ima \sigma_X\in S\otimes\RR$.
\end{itemize}

As in proof of Example \ref{BAAAAB}, if $\varphi(\Ima\sigma_X)\cdot v\neq0$, we multiply $\sigma_{X}$ by $\lambda:=\sqrt{-1}-\frac{\varphi(\Ree \sigma_X)\cdot v}{\varphi(\Ima \sigma_X)\cdot v}$ to have $(X,\lambda\sigma_{X},\omega_{X},\varphi)$ admissible according to $j$.

And by (ii) and (\ref{K32equa2}), we can find $\beta\in \varphi(S)\cap (M\otimes\RR) \oplus \RR v$ such that for all $D\in \varphi(S)$ with $D^2=-2$ one has
\[
\left(\Pr (\varphi(\Ree \sigma_X))\cdot D\neq 0\right) \Rightarrow \left(\Pr(\beta)\cdot D \neq 0\right),
\]
and, for all $D\in\Delta_X^{nd}$, 
\[
\beta\cdot D\neq 0.
\]
The first statement follows from Proposition \ref{pr main} and Theorem \ref{tm mirror transform of branes}.

Finally, we consider the possible values of the parameters
$$ (r(T),a(T),\delta(T),r(S),a(S),\delta(S)). $$ 
For the same reason as in the proof of Example \ref{K32ABAABA}, to allow an embedding $j:U\hookrightarrow \varphi(S)$, we need $r(S)\geq a(S)+2$. That is $20-r(T)\geq a(T)$.
\end{proof}

\subsection{Natural $\ABA$ and $\AAB$-branes on $\K3^{[2]}$}

We now study the fixed locus of $\hat{\imath} : \Hilb^2(X) \to \Hilb^2(X)$ in the case where $i$ is an antiholomorphic (anti)sym\-ple\-ctic involution of $X$. Recall from Remark \ref{rm other brane involutions in K3} and Theorem \ref{tm nikulin description} that the fixed locus is either empty or the disjoint union of smooth surfaces
\begin{equation} \label{eq fixed locus of i}
X^i = \bigsqcup_{k = 1}^n \Sigma_k.
\end{equation}

Denote the Hilbert--Chow morphism by $\varepsilon : \Hilb^2(X) \to \Sym^2(X)$. In this case, it coincides with the blow-up of $\Sym^2(X)$ along the diagonal $\Delta \subset \Sym^2(X)$. Note that the fixed locus of $i_{(2)} : \Sym^2(X) \to \Sym^2(X)$ is 
\begin{equation} \label{eq fixed locus of i_n}
\Sym^2(X)^{i_{(2)}} = \left(  \,  (X/i) \, \cup  \bigsqcup_{k = 0}^n \Sym^2(\Sigma_k) \right ) \sqcup  \bigsqcup_{1 \leq k < k' \leq n}
\Sym(\Sigma_k,\Sigma_{k'}),
\end{equation}
where $\Sym(\Sigma_{k}, \Sigma_{k'})$ is the projection of $\Sigma_{k} \times \Sigma_{k'}$ inside $\Sym^2(X)$. Denote $\Delta_k := \Delta \cap \Sym^2(\Sigma_{k}) \cong \Sigma_{k}$ and observe that $X/i$ intersects with $\Sym^2(\Sigma_{k})$ at $\Delta_k \cong \Sigma_{k}$. In fact, the intersection of the fixed locus $(\Sym^2(X)^{i_{(2)}}$ with the diagonal $\Delta$ is precisely $\bigsqcup_{k = 1}^n \Delta_k \cong X^i$.

\begin{proposition} \label{pr fixed locus of AAB}
Let $X$ be a smooth connected quasi-projective complex surface and $j$ an antiholomorphic involution on it with non-empty fixed locus \eqref{eq fixed locus of i}. Then, the fixed locus of $\hat{\imath} : \Hilb^2(X) \to \Hilb^2(X)$ is
\[
\Hilb^2(X)^{\hat{\imath}} \cong \, \Blow^\RR_{\bigsqcup \Delta_k} \left ( (X/i) \cup \bigsqcup_{k = 0}^n  \Sym^2(\Sigma_{k}) \right ) \sqcup  \bigsqcup_{1 \leq k < k' \leq n}
\Sigma_{k} \times \Sigma_{k'},
\]
where the first component is the (real) blow-up of the singular differentiable manifold $ X/i \cup \bigsqcup_{k = 0}^n  \Sym^2(\Sigma_{k})$ of real dimension 4 along the submanifold $\bigsqcup_{k = 1}^n \Delta_k$, and it is smooth.
\end{proposition}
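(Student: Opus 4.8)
The plan is to transport the problem to the symmetric square through the Hilbert--Chow morphism $\varepsilon : \Hilb^2(X) \to \Sym^2(X)$, which is equivariant for $\hat{\imath}$ and the induced involution $i_{(2)}$, and then to examine separately the part of the fixed locus lying over the complement of the diagonal $\Delta$ and the part lying over $\Delta$. Over $\Sym^2(X) \smallsetminus \Delta$ the map $\varepsilon$ is an isomorphism, so $\Hilb^2(X)^{\hat{\imath}}$ there is identified with $\Sym^2(X)^{i_{(2)}} \smallsetminus \Delta$, whose shape is recorded in \eqref{eq fixed locus of i_n}. This immediately yields the closed components $\Sigma_k \times \Sigma_{k'}$ with $k < k'$ (which never meet $\Delta$), together with the locus $\left( (X/i) \cup \bigsqcup_k \Sym^2(\Sigma_k) \right) \smallsetminus \bigsqcup_k \Delta_k$. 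What remains is to describe the fixed locus over $\bigsqcup_k \Delta_k \cong X^i$ and to glue it to this open part.

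Over a diagonal fixed point $2z$ with $z \in \Sigma_k$, the fibre $\varepsilon^{-1}(2z)$ is the projectivised tangent space $\PP(T_z X) \cong \PP^1$, and $\hat{\imath}$ acts on it through the $\CC$-antilinear differential $di_z$. Since $z$ is fixed by the antiholomorphic involution $i$, the tangent space decomposes as $T_z X = T_z\Sigma_k \otimes_{\RR} \CC$, so this action is the antiholomorphic involution of $\PP^1$ whose fixed locus is the real projective line $\PP_{\RR}(T_z\Sigma_k) \cong \mathbb{RP}^1 \cong S^1$ of lines defined over $\RR$. Thus over each $\Delta_k$ the fixed locus acquires a circle, namely the real projectivisation of the normal directions, which is exactly the locus introduced by a real blow-up of $\Delta_k$.

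The crux is a local computation at $2z$. I would choose holomorphic coordinates identifying $(X,i)$ near $z$ with $(\CC^2, \text{conjugation})$ and $\Sigma_k$ with $\RR^2 \subset \CC^2$, and write $\Sym^2(\CC^2) \cong \CC^2_s \times (\CC^2_d / \pm)$ in sum and difference coordinates, so that $\varepsilon$ becomes the minimal resolution of the $A_1$-singularity $\CC^2_d/\pm$ and $\hat{\imath}$ acts by conjugation on both $s$ and $d$. The two sheets $\Sym^2(\Sigma_k)$ and $X/i$ correspond respectively to $d$ real and $d$ purely imaginary, meeting only along $\Delta_k = \{ d = 0\}$. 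In the resolution chart $u = d_1^2$, $t = d_2/d_1$ the involution reads $(u,t) \mapsto (\bar u, \bar t)$, so its fixed locus is the smooth plane $\{ u,t \in \RR \}$; here $\Sym^2(\Sigma_k)$ fills the half $u \geq 0$, the sheet $X/i$ fills the half $u \leq 0$, and the exceptional circle sits at $u = 0$. This shows that the two sheets glue smoothly across the exceptional $\mathbb{RP}^1$, simultaneously resolving the orbifold apex of each cone $\RR^2/\pm$ and the crossing of the sheets, and that the fixed locus is locally the smooth $\RR^2_{(u,t)} \times \RR^2_s$.

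Finally I would globalise: the construction is canonical (centre of mass and relative tangent direction), so the local models patch, and the product with the fixed plane $\RR^2_s = T_z\Sigma_k$ identifies the remaining component with the space obtained from $(X/i) \cup \bigsqcup_k \Sym^2(\Sigma_k)$ by replacing each $\Delta_k$ with the circle bundle $\PP_{\RR}$ of its normal directions; this is by definition $\Blow^{\RR}_{\bigsqcup \Delta_k}$, and the local computation certifies its smoothness. The hard part is precisely this local analysis: one must verify that the two genuinely different sheets $X/i$ and $\Sym^2(\Sigma_k)$ glue smoothly --- not merely meet --- along the exceptional circle, since it is this fact that both produces smoothness and pins down the identification with a real (projective) blow-up rather than with a spherical, boundary-creating one.
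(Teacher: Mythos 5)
Your proposal is correct and follows the same overall route as the paper's proof: transport the problem through the Hilbert--Chow morphism, read off the components $\Sigma_k\times\Sigma_{k'}$ and the open part of the first component from the fixed locus of $i_{(2)}$ away from the diagonal, and identify the fibre of the fixed locus over a diagonal point $2z$ with the circle $\PP_\RR(T_z(X^i))$ coming from the antiholomorphic action of $di_z$ on $\PP(T_zX)\cong\PP^1$. The only genuine difference is in the last step. The paper obtains smoothness of the first component abstractly, by combining Fogarty's smoothness of $\Hilb^2(X)$ with the general fact that the fixed locus of an involution of a smooth manifold is smooth (citing \cite[Lemma 9]{baraglia&schaposnik}), and then asserts the identification with the real blow-up. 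You instead carry out an explicit local computation in the chart $u=d_1^2$, $t=d_2/d_1$ of the $A_1$-resolution, where the involution becomes $(u,t)\mapsto(\bar u,\bar t)$ and the fixed locus is the smooth plane $\{u,t\in\RR\}$ with $\Sym^2(\Sigma_k)$ and $X/i$ filling the halves $u\geq 0$ and $u\leq 0$. This is correct (your sum/difference coordinates and the identification of the two sheets with $d$ real and $d$ purely imaginary check out), and it buys something the paper's argument leaves implicit: it exhibits concretely how the two sheets glue across the exceptional $\PP^1_\RR$ and thereby certifies that the result is the projective real blow-up rather than merely some smooth $4$-manifold containing the circle bundle. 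The price is that you must justify the choice of local holomorphic coordinates linearizing $i$ to standard conjugation near a fixed point and check the second resolution chart, both routine. Either argument is complete; yours is the more self-contained and informative of the two.
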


\begin{proof}
Recall the fixed locus of $i_{(2)}$ described in \eqref{eq fixed locus of i_n}. Since the components $\Sym(\Sigma_{k}, \Sigma_{k'}) \cong \Sigma_{k} \times \Sigma_{k'}$ do not intersect $\Delta$, one has for each of them that the pre-image under the blow-up morphism $\varepsilon$ given by the Hilbert--Chow map is
\[
\varepsilon^{-1}(\Sym(\Sigma_{k}, \Sigma_{k'})) \cong \Sym(\Sigma_{k}, \Sigma_{k'}) \cong \Sigma_{k} \times \Sigma_{k'}.
\]

Take a point $x \in X^i$, the differential $di_x : T_xX \to T_xX$ is an antiholomorphic involution and therefore
\[
\overline{\imath}_x : \PP(T_xX) \longrightarrow \PP(T_xX) 
\]
is antiholomorphic as well. Recall that $\PP(T_xX) \cong \PP^1$ and note that $\overline{\imath}_x$ fixes a circle given by the projection onto $\PP(T_xX)$ of the subspace $(T_xX)^{di_x}$. This circle coincides with the real projective space of $(T_xX)^{di_x}$
\[
\PP(T_xX)^{\overline{\imath}_x} \cong S^1 \cong \PP_\RR \left ( (T_xX^{di_x} ) \right ).
\]
Then, for every $x \in X^i$ (so $[x,x]_{\sym_2} \in \bigsqcup_{k = 1}^n \Delta_k$) one has that the restriction of the blow-up over a point to the fixed locus is
\begin{equation} \label{eq description of the fixed locus of the fibre of HM}
\varepsilon^{-1}([x,x]_{\sym^2}) \cap \Hilb^2(X)^{\hat{\imath}} = \PP_\RR \left ( (T_xX^{di_x} ) \right ) = \PP_\RR \left ( T_x(X^{i}) \right ).
\end{equation}
Therefore the restriction of the blow-up of $\Sym^2(X)$ at $\Delta$ to the fixed locus of $\hat{\imath}$ is the real blow-up at $\bigsqcup_{k = 1}^n \Delta_k \cong X^i$,
\begin{align*}
\Blow_{\Delta}\left ( \Sym^2(X) \right) \cap \varepsilon^{-1} & \left ( (X/i) \cup \bigsqcup_{k = 0}^n  \Sym^2(\Sigma_{k}) \right ) =
\\
= & \Blow^\RR_{\bigsqcup \Delta_i} \left ( X/i \cup \bigsqcup_{k = 0}^n  \Sym^2(\Sigma_{k}) \right ).
\end{align*}
Finally, recall from \cite{fogarty} that $\Hilb^n(X)$ is smooth. Then, thanks to \cite[Lemma 9]{baraglia&schaposnik}, this component is also smooth.
\end{proof}

This, together with Remark \ref{rm other brane involutions in K3} and Theorem \ref{tm nikulin description}, gives an explicit description of the fixed locus of the natural $\ABA$ and $\AAB$-involutions inside $\Hilb^2(X)$.

\begin{corollary}
Let $X$ be a smooth $\K3$-surface and $i$ an antiho\-lo\-morphic (anti)sym\-plec\-tic involution on it associated to the lattice invariants $(r,a,\delta)$. Then, 
\begin{enumerate}
\item for $r = 10$, $a = 10$ and $\delta = 0$, one has
\[
\Hilb^2(X)^{\hat{\imath}} \cong \,  X/i, 
\]
which is a smooth $4$-manifold with no boundary,

\item for $r = 10$, $a = 8$ and $\delta = 0$, one has
\[
\Hilb^2(X)^{\hat{\imath}} \cong \, M \,  \sqcup (T_{1} \times T_{2}),
\]
where $T_1$ and $T_2$ are smooth $2$-dimensional tori, and $M$ is a smooth $4$-manifold with no boundary, and

\item in the remaining cases, setting $g = (22 - r - a)/2$ and $\ell = (r - a)/2$, one has
\[
\Hilb^2(X)^{\hat{\imath}} \cong \, M'   \, \sqcup \bigsqcup_{k = 1}^\ell
(\Sigma_g \times S^2)  \sqcup \bigsqcup_{k = 1}^{\binom{\ell}{2}} (S^2 \times S^2),
\]
where $S^2$ is the $2$-sphere, $\Sigma_g$ is a compact smooth surface of genus $g$ and $M'$ is a smooth $4$-manifold with no boundary.
\end{enumerate}
\end{corollary}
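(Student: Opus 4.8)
The plan is to combine the topological description of the real surface $X^i$ with Proposition \ref{pr fixed locus of AAB}, reading off the diffeomorphism types of the connected components $\Sigma_k$ from Theorem \ref{tm nikulin description} via the hyperk\"ahler rotation of Remark \ref{rm other brane involutions in K3}. Since that rotation is a diffeomorphism of the underlying smooth four-manifold, it carries the fixed locus of the associated holomorphic antisymplectic involution onto $X^i$ while preserving diffeomorphism type; hence a complex curve of genus $g$ contributes a smooth orientable surface $\Sigma_g$, a rational curve $\PP^1$ contributes $S^2$, and an elliptic curve contributes a two-torus $T_i$. This fixes once and for all the building blocks $\Sigma_k$ occurring in the three cases of Theorem \ref{tm nikulin description}.

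For the first case, $r=10$, $a=10$, $\delta=0$, Theorem \ref{tm nikulin description} gives $X^i=\emptyset$, so Proposition \ref{pr fixed locus of AAB} does not apply and I would argue directly. As $i$ is fixed-point-free, no non-reduced length-two subscheme supported at a single point $p$ can be $\hat{\imath}$-invariant, since invariance would force $i(p)=p$; and a reduced subscheme $\{p,q\}$ is invariant precisely when $q=i(p)$. The assignment $p\mapsto\{p,i(p)\}$ is then the quotient map, so $\Hilb^2(X)^{\hat{\imath}}\cong X/i$, and because $i$ is a free smooth involution on the compact manifold $X$, the quotient $X/i$ is a smooth closed four-manifold, as claimed.

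In the remaining cases I would feed the components identified above into Proposition \ref{pr fixed locus of AAB}. For $r=10$, $a=8$, $\delta=0$ one has $n=2$ with $\Sigma_1\cong T_1$ and $\Sigma_2\cong T_2$, so the only cross-term $\bigsqcup_{1\leq k<k'\leq 2}\Sigma_k\times\Sigma_{k'}$ is $T_1\times T_2$, while the real-blow-up component is the asserted $M$. In the generic case I would label $\Sigma_1\cong\Sigma_g$ and $\Sigma_2,\dots,\Sigma_{\ell+1}\cong S^2$, so $n=\ell+1$; pairing $\Sigma_g$ with each sphere yields $\ell$ copies of $\Sigma_g\times S^2$, pairing the spheres among themselves yields $\binom{\ell}{2}$ copies of $S^2\times S^2$, and the real-blow-up component is $M'$. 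Smoothness of the blow-up components is part of the statement of Proposition \ref{pr fixed locus of AAB}, and they are closed because $X$ is compact and, by \eqref{eq description of the fixed locus of the fibre of HM}, the blow-up replaces each point of $\bigsqcup\Delta_k$ by the closed fiber $\PP_\RR(T_x(X^i))\cong S^1$, hence introduces no boundary.

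The step that needs the most care is the empty case, which lies outside the hypotheses of Proposition \ref{pr fixed locus of AAB} and must be handled by the elementary fixed-point analysis above. The only other point to keep in mind is that the hyperk\"ahler rotation alters the complex structure but not the diffeomorphism type of the fixed locus, which is exactly what licenses the passage from the complex curves of Theorem \ref{tm nikulin description} to the real surfaces $\Sigma_g$, $S^2$ and $T_i$ entering the final description.
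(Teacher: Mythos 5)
Your proof is correct and follows essentially the same route as the paper, which presents this corollary as an immediate consequence of Proposition \ref{pr fixed locus of AAB} combined with Remark \ref{rm other brane involutions in K3} and Theorem \ref{tm nikulin description}. Your separate elementary treatment of the case $(r,a,\delta)=(10,10,0)$, where the fixed locus is empty and Proposition \ref{pr fixed locus of AAB} does not literally apply, is a welcome detail that the paper leaves implicit.
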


\

\noindent
Emilio \textsc{Franco}

\noindent
CMUP, Universidade do Porto

\noindent
Rua do Campo Alegre 1021/1055, Porto (Portugal)

\noindent
{\tt emilio.franco@fc.up.pt}

\

\noindent
Marcos \textsc{Jardim}

\noindent
IMECC, Universidade Estadual de Campinas

\noindent
Rua S\'ergio Buarque de Holanda 651, Cidade Universit\'aria "Zeferino Vaz", Campinas (SP, Brazil)

\noindent
{\tt jardim@ime.unicamp.br}

\

\noindent
Gr\'egoire \textsc{Menet}

\noindent
IMECC, Universidade Estadual de Campinas

\noindent
Rua S\'ergio Buarque de Holanda 651, Cidade Universit\'aria "Zeferino Vaz", Campinas (SP, Brazil)

\noindent
{\tt menet@ime.unicamp.br}

\end{document}